\numberwithin{equation}{section}
\newtheorem{lma}{Lemma}[section]
\newaliascnt{thmCt}{lma}
\newtheorem{thm}[thmCt]{Theorem}
\newaliascnt{corCt}{lma}
\newtheorem{cor}[corCt]{Corollary}
\newaliascnt{prpCt}{lma}
\newtheorem{prp}[prpCt]{Proposition}
\newcounter{theoremintro}
\newtheorem{thmIntro}[theoremintro]{Theorem}
\newtheorem{corIntro}[theoremintro]{Corollary}
\theoremstyle{definition}
\newaliascnt{pgrCt}{lma}
\newtheorem{pgr}[pgrCt]{}
\newaliascnt{dfnCt}{lma}
\newtheorem{dfn}[dfnCt]{Definition}
\newaliascnt{rmkCt}{lma}
\newtheorem{rmk}[rmkCt]{Remark}
\newaliascnt{qstCt}{lma}
\newtheorem{qst}[qstCt]{Question}
\newaliascnt{exaCt}{lma}
\newtheorem{exa}[exaCt]{Example}
\DeclareMathOperator{\Sub}{Sub}
\DeclareMathOperator{\Cu}{Cu}
\newcommand{\axiomD}[1]{(D#1)}
\newcommand{\axiomO}[1]{(O#1)}
\newcommand{\NN}{\mathbb{N}}
\newcommand{\KK}{\mathcal{K}}
\newcommand{\ca}{$C^*$-al\-ge\-bra}
\newcommand{\SubSep}{\Sub_{\mathrm{sep}}}
\newcommand{\SubCtbl}{\Sub_{\mathrm{ctbl}}}
\newcommand{\freeVar}{\_\,}
\newcommand{\txtSup}{\mathrm{sup}}
\newcommand{\txtSeq}{\mathrm{seq}}
\newcommand{\andSep}{\,\,\,\text{ and }\,\,\,}
\newcommand{\CuSgp}{$\mathrm{Cu}$-sem\-i\-group}
\newcommand{\CuMor}{$\mathrm{Cu}$-mor\-phism}
\title{Covering dimension of Cuntz semigroups II}
\author{Hannes Thiel}
\address{H.\ Thiel,
Institute of Geometry, TU Dresden, Zellescher Weg 12-14, 01069 Dresden, Germany.}
\email{hannes.thiel@posteo.de}
\urladdr{www.hannesthiel.org}
\author{Eduard Vilalta}
\address{E.\ Vilalta,
Departament de Matem\`{a}tiques, 
Universitat Aut\`{o}noma de Barcelona, 08193 Bellaterra, Barcelona, Spain}
\email{evilalta@mat.uab.cat}
\thanks{The first named author was partially supported by the ERC Consolidator Grant No. 681207.
The second named author was partially supported by MINECO (grant No.\ PRE2018-083419 and No.\ MTM2017-83487-P), and by the Comissionat per Universitats i Recerca de la Generalitat de Catalunya (grant No.\ 2017SGR01725).
}
\subjclass[2010]%
{Primary
46L05, 
46L85; 
Secondary
54F45, 
55M10. 
}
\keywords{$C^*$-algebras, Cuntz semigroups, covering dimension}
\date{\today}
\begin{document}

\begin{abstract}
We show that the dimension of the Cuntz semigroup of a \ca{} is determined by the dimensions of the Cuntz semigroups of its separable sub-\ca{s}.
This allows us to remove separability assumptions from previous results on the dimension of Cuntz semigroups.

To obtain these results, we introduce a notion of approximation for abstract Cuntz semigroups that is compatible with the approximation of a \ca{} by sub-\ca{s}.
We show that many properties for Cuntz semigroups are preserved by approximation and satisfy a L{\"o}wenheim-Skolem condition.
\end{abstract}

\maketitle

\section{Introduction}

The Cuntz semigroup of a \ca{} $A$ encodes the comparison theory of positive elements in $A$ and its stabilization in a partially ordered, abelian monoid $\Cu(A)$.
This invariant was introduced by Cuntz \cite{Cun78DimFct} in his pioneering work on simple \ca{s}, and it continues to play an important role to this day.
For example, it was used by Toms to distinguish his groundbreaking examples of nonisomorphic simple, nuclear \ca{s} with the same $K$-theoretic data \cite{Tom08ClassificationNuclear}, to classify algebras and morphisms in \cite{Rob12LimitsNCCW}, and it was a key feature in some recent breakthroughs in the structure theory of \ca{s} \cite{Thi20RksOps, AntPerRobThi18arX:CuntzSR1}.
 
In \cite{ThiVil21arX:DimCu}, we introduced a notion of covering dimension (see \autoref{dfn:dim}) for Cuntz semigroups and their abstract counterparts, the \CuSgp{s} as introduced in \cite{CowEllIva08CuInv} and extensively studied in
\cite{AntPerThi18TensorProdCu, AntPerThi20AbsBivariantCu}.
Among other results, we proved the expected permanence properties (recalled in \autoref{prp:Permanence}), studied the relation between the dimension of $\Cu(A)$ and the nuclear dimension of $A$, and computed the dimension of Cuntz semigroups of simple, $\mathcal{Z}$-stable \ca{s}.

The goal of this paper is to further develop the results from \cite{ThiVil21arX:DimCu} and provide additional tools to compute the dimension of Cuntz semigroups and \CuSgp{s}.

Our first main result is a new permanence property:
the dimension of Cuntz semigroups behaves well with respect to approximation by sub-\ca{s}.
Here, we say that a \ca{} $A$ is \emph{approximated} by a collection of sub-\ca{s} $A_\lambda\subseteq A$ if for every $a_1,\ldots,a_n\in A$ and $\varepsilon>0$ there exist $\lambda\in\Lambda$ and $b_1,\ldots,b_n\in A_\lambda$ such that $\|b_j-a_j\|<\varepsilon$ for $j=1,\ldots,n$.
(This is stronger than requiring that $\bigcup_\lambda A_\lambda$ is dense in $A$.
On the other hand, the subalgebras are not required to be nested.)
For example, a \ca{} is \emph{locally finite-dimensional} (sometimes called \emph{locally AF}) if and only if it is approximated by a family of finite-dimensional sub-\ca{s}.

\begin{thmIntro}[\ref{prp:DimApproxCAlg}]
\label{thmIntro:A}
Let $A$ be a \ca{} that is approximated by a family of sub-\ca{s} $A_\lambda\subseteq A$. 
Then $\dim(\Cu(A))\leq \sup_\lambda \dim(\Cu(A_\lambda))$.
\end{thmIntro}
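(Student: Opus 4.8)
The plan is to deduce the statement from a purely order-theoretic result about approximation of \CuSgp{s}: that covering dimension cannot increase when a \CuSgp{} is approximated by a family of sub-\CuSgp{s}. The proof then splits into two tasks. First, to verify that the \ca{} approximation $A_\lambda\subseteq A$ gives rise to an approximation of $\Cu(A)$ by the family $\Cu(A_\lambda)$ in the abstract sense introduced earlier; and second, to invoke the general preservation result for dimension under such approximations.

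For the first task, I would begin by recording that each inclusion $\iota_\lambda\colon A_\lambda\hookrightarrow A$ is a $*$-homomorphism, hence induces a \CuMor{} $\Cu(\iota_\lambda)\colon\Cu(A_\lambda)\to\Cu(A)$. The content is to show that the images of these maps approximate $\Cu(A)$. Here the key technical input is the standard functional-calculus estimate: if $a,b$ are positive elements of $A$ with $\|a-b\|<\varepsilon$, then $(a-\varepsilon)_+\precsim b$, together with the fact that $[a]=\sup_\varepsilon[(a-\varepsilon)_+]$ in $\Cu(A)$. Given a compactly-contained pair $x\ll y$ in $\Cu(A)$, I would represent a suitable element between $x$ and $y$ by a positive element $a$, choose $\varepsilon>0$ so that $x\leq[(a-\varepsilon)_+]$, use the \ca{} approximation to produce $\lambda$ and a positive $b\in A_\lambda$ with $\|a-b\|<\varepsilon$, and conclude that $x$ is dominated by the image of $[b]\in\Cu(A_\lambda)$ while still lying below $y$. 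Since the \ca{} approximation is uniform over finite tuples, a single $\lambda$ can handle finitely many such relations simultaneously, which is precisely what the abstract approximation condition demands.

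The second task is then to apply the general theorem, established earlier via the L\"{o}wenheim--Skolem machinery, that if a \CuSgp{} $S$ is approximated by a family of sub-\CuSgp{s} $S_\lambda$, then $\dim(S)\leq\sup_\lambda\dim(S_\lambda)$. Applied to $S=\Cu(A)$ with $S_\lambda$ the images of $\Cu(A_\lambda)$, this yields exactly $\dim(\Cu(A))\leq\sup_\lambda\dim(\Cu(A_\lambda))$.

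I expect the main obstacle to lie in the first task, and within it the careful verification that the passage from norm-approximation of positive elements to the order-theoretic approximation condition respects the compact-containment relation $\ll$ and treats suprema of increasing sequences correctly. In particular, one must ensure that the elements $[b]$ extracted from the approximants can be made to interpolate compactly between the prescribed $x$ and $y$, rather than merely dominating $x$; this is where the interplay between the cut-downs $(a-\varepsilon)_+$ and the definition of $\ll$ in $\Cu(A)$ must be exploited, and where most of the bookkeeping will reside.
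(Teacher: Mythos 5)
Your two-step plan coincides with the paper's own skeleton: \autoref{prp:CuApproxCAlg} (the induced maps $\Cu(A_\lambda)\to\Cu(A)$ approximate $\Cu(A)$ in the sense of \autoref{dfn:ApproxCu}) followed by \autoref{prp:ApproxDim} (dimension does not increase under approximation). However, two points in your formulation are genuine gaps rather than deferred bookkeeping. The first concerns step two: you state the preservation result as being about ``approximation by a family of sub-\CuSgp{s}'' and then apply it with ``$S_\lambda$ the images of $\Cu(A_\lambda)$''. As written this does not give the theorem: the \CuMor{s} $\Cu(A_\lambda)\to\Cu(A)$ are in general \emph{not} order-embeddings (the paper's example is $\mathbb{C}\subseteq\mathcal{O}_2$), the image of a \CuMor{} need not be a sub-\CuSgp{} of $\Cu(A)$, and even when it is, no available result bounds the dimension of that image by $\dim(\Cu(A_\lambda))$ --- the image is not an ideal quotient, so the permanence properties of \autoref{prp:Permanence} do not apply, and you would be left with an inequality against $\sup_\lambda$ of the dimensions of the images, not of the $\Cu(A_\lambda)$. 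This is exactly why \autoref{dfn:ApproxCu} is formulated for arbitrary \CuMor{s} $\varphi_\lambda\colon S_\lambda\to S$, with the bound in \autoref{prp:ApproxDim} given in terms of the \emph{domains} $S_\lambda$. Your step one implicitly uses this morphism-based notion, so the repair is simply to invoke the morphism-based statement; but the version you wrote down does not yield $\dim(\Cu(A))\leq\sup_\lambda\dim(\Cu(A_\lambda))$. (A minor related slip: \autoref{prp:ApproxDim} is proved directly from the definitions; the L{\"o}wenheim-Skolem machinery is needed only for Theorem~\ref{thmIntro:B}.)

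The second gap is in step one, where you locate the main difficulty in making $[b]$ interpolate compactly between the prescribed elements. That part is the easy half, handled by standard cut-down estimates. What your sketch omits is the harder half of the approximation condition: the relations $\sum_{j} m_k(j)x_j\ll\sum_{j} n_k(j)x_j'$ must be witnessed \emph{inside} $\Cu(A_\lambda)$, that is, one needs $\sum_{j} m_k(j)y_j\ll\sum_{j} n_k(j)y_j$ to hold in $\Cu(A_\lambda)$ itself, not merely after mapping into $\Cu(A)$. Norm-approximating the positive elements $a_j$ by $b_j\in A_\lambda$ cannot give this on its own: the Cuntz subequivalences in $A$ are implemented by elements $r_k\in A$ with $\bigoplus_j a_j^{\oplus m_k(j)}\approx r_k\bigl(\bigoplus_j (a_j-2\varepsilon)_+^{\oplus n_k(j)}\bigr)r_k^*$, and one must approximate these $r_k$ by elements $s_k\in A_\lambda$ as well, so that the perturbed identity has all its ingredients in $A_\lambda$ and R{\o}rdam's lemma can then be applied \emph{inside} $A_\lambda$. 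This simultaneous approximation of the $a_j$ and the $r_k$, with the attendant $\varepsilon$--$\delta$ control (Claims 1--3 in the paper's proof of \autoref{prp:CuApproxCAlg}), is the actual content of step one and is the key idea missing from your plan.
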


To prove this result, we introduce a notion of approximation for \CuSgp{s} (see \autoref{dfn:ApproxCu}) and we show that if a \ca{} $A$ is approximated by a family of sub-\ca{s} $A_\lambda\subseteq A$, then $\Cu(A)$ is approximated by the corresponding Cuntz semigroups $\Cu(A_\lambda)$;
see \autoref{prp:CuApproxCAlg}.
For every fixed $n$, we prove that the property of having dimension at most~$n$ is preserved by approximations (see \autoref{prp:ApproxDim}), which then gives Theorem~\ref{thmIntro:A} above.

Our second main result shows that the dimension of Cuntz semigroups satisfies the L{\"o}wenheim-Skolem condition.

\begin{thmIntro}[\ref{prp:SepSubCaDim}]
\label{thmIntro:B}
Let $A$ be a \ca. 
Then, for every separable sub-\ca{} $B_0\subseteq A$ there exists a separable sub-\ca{} $B\subseteq A$ such that $B_0\subseteq B$ and $\dim(\Cu(B))\leq\dim(\Cu(A))$.
\end{thmIntro}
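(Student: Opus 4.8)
The plan is to build the separable sub-\ca{} $B$ as a countable union of an increasing chain of separable sub-\ca{s}, using a back-and-forth (Löwenheim–Skolem) argument that alternates between enlarging the algebra and ``witnessing'' the dimension inequality at the level of Cuntz semigroups. The key conceptual point is that $\dim(\Cu(A))\leq n$ is, by \autoref{dfn:dim}, a statement quantified over finite configurations of elements of $\Cu(A)$ and asserting the existence of further elements satisfying certain relations (involving $\ll$ and finite sums). Such a property has the right logical shape for a Löwenheim–Skolem reflection argument: each instance of the ``for all finitely many elements there exist finitely many elements'' schema can be satisfied using only countably much data, provided one is allowed to pass to a slightly larger separable subalgebra to capture the witnesses.

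First I would fix $n:=\dim(\Cu(A))$ (assuming it is finite; if it is infinite there is nothing to prove) and set up the inductive construction. Starting from $B_0$, I would construct separable sub-\ca{s} $B_0\subseteq B_1\subseteq B_2\subseteq\cdots\subseteq A$ as follows. Given $B_k$, the Cuntz semigroup $\Cu(B_k)$ is a countably-based \CuSgp{}, so it has a countable dense subset; via the functorial map $\Cu(B_k)\to\Cu(A)$ induced by the inclusion, I would consider the countably many finite configurations of elements coming from this dense set. For each such finite configuration, the dimension hypothesis on $\Cu(A)$ produces witnessing elements in $\Cu(A)$ satisfying the dimension relations; each such Cuntz-class is represented by a positive element in $A\otimes\KK$, which in turn involves only countably much data (a sequence of elements of $A$). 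I would then let $B_{k+1}$ be a separable sub-\ca{} of $A$ containing $B_k$ together with (representatives of) all these countably many witnesses. Finally I would set $B:=\overline{\bigcup_k B_k}$, which is separable and contains $B_0$.

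The heart of the argument is to verify that this $B$ satisfies $\dim(\Cu(B))\leq n$. Here I expect to invoke the machinery of approximation developed earlier: since $B=\overline{\bigcup_k B_k}$ is an inductive limit, $\Cu(B)$ is approximated (in the sense of \autoref{dfn:ApproxCu}) by the family $\Cu(B_k)$, and indeed $\Cu(B)$ is the sequential inductive limit $\varinjlim \Cu(B_k)$ in the category of \CuSgp{s}. Given any finite configuration of elements of $\Cu(B)$, continuity of the inductive limit lets me approximate them from within some $\Cu(B_k)$; by construction the required dimension witnesses for these elements were arranged to lie in $\Cu(B_{k+1})\subseteq\Cu(B)$, where the relevant $\ll$-relations and sum decompositions persist under the connecting maps. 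Thus every instance of the dimension condition for $\Cu(B)$ can be met, giving $\dim(\Cu(B))\leq n=\dim(\Cu(A))$.

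The main obstacle, and the step requiring the most care, is the bookkeeping that guarantees \emph{closure}: one must ensure that the witnesses needed to verify the dimension condition for an arbitrary finite configuration in $\Cu(B)$ have genuinely been included at some finite stage of the construction. Because the dimension condition of \autoref{dfn:dim} is phrased in terms of the far-below relation $\ll$ (rather than the order $\leq$), the witnesses supplied at stage $k$ are witnesses for elements that are \emph{way-below} the given ones, and one must check that taking a sequential supremum over all stages recovers witnesses for arbitrary elements of $\Cu(B)$ — this is exactly where the robustness of ``having dimension at most $n$'' under approximation, established in \autoref{prp:ApproxDim}, does the work. A secondary technical point is verifying that the passage from a Cuntz-class witness in $\Cu(A)$ to an actual separable subalgebra carrying a representative is compatible with the order-theoretic relations; this is routine given the standard description of $\ll$ and suprema in Cuntz semigroups of \ca{s}, but it must be stated carefully so that the relations survive the inclusion $\Cu(B_{k+1})\to\Cu(B)$.
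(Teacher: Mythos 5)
Your proposal is correct in outline, but it takes a genuinely different route from the paper's proof. The paper does no witness-chasing at the level of the \ca{} $A$: it first shows (\autoref{prp:SubCaWithSubCu}), using separable elementary submodels from the model theory of \ca{s}, that the separable sub-\ca{s} $B\subseteq A$ for which $\Cu(B)\to\Cu(A)$ is an order-embedding form a $\sigma$-complete, cofinal family; it then proves the L{\"o}wenheim-Skolem property for dimension abstractly, for countably based sub-\CuSgp{s} of $\Cu(A)$ (\autoref{prp:subCuDim} and \autoref{prp:LS-Dim}); and it finally interleaves these two cofinal families, so that the resulting $B$ has $\Cu(B)$ realized as a sub-\CuSgp{} of $\Cu(A)$ of dimension at most $n=\dim(\Cu(A))$. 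Your construction instead stays at the C*-level and never needs order-embeddings, because every relation you must transport ($\ll$ and finite sums) travels covariantly along \CuMor{s}: hypotheses are pushed from $\Cu(B_k)$ into $\Cu(A)$, and witnesses are pushed from $\Cu(B_{k+1})$ into $\Cu(B)$. This eliminates the model-theoretic input entirely, which is a genuine simplification; what the paper's route buys in exchange is that the combinatorial work is done once, abstractly, at the semigroup level, and that the subalgebras produced have the stronger order-embedding property that feeds into the lattice framework of its Section~4.

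Two points in your sketch must be made precise for the argument to close. First, adjoining ``representatives of the witnesses'' alone is not enough: since $\Cu(B_{k+1})\to\Cu(A)$ need not be an order-embedding, the relations (i)--(iii) of \autoref{dfn:dim}, which hold in $\Cu(A)$, do not automatically hold in $\Cu(B_{k+1})$. One must write each such relation $[a]\ll[b]$ as $a\precsim (b-\varepsilon)_+$ in $A\otimes\KK$ for some $\varepsilon>0$ and adjoin to $B_{k+1}$ also the (countable) data of a sequence implementing this subequivalence, so that the relation already holds in $\Cu(B_{k+1})$; you flag this as routine, and it is, but it is precisely the step that substitutes for the paper's order-embeddings, and the delicate direction is $\Cu(A)$ down to $\Cu(B_{k+1})$, not $\Cu(B_{k+1})$ into $\Cu(B)$ (the latter is automatic for \CuMor{s}). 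Second, \autoref{prp:ApproxDim} cannot be cited as a black box: its hypothesis is that each approximating semigroup has dimension at most $n$, which your construction does not (and need not) guarantee for the $\Cu(B_k)$. What your verification requires is the relative form of its proof, which your outline in fact describes: pull a configuration of $\Cu(B)$ back to some $\Cu(B_k)$ via \autoref{prp:ApproxLimit}, interpolate with the countable basis fixed at stage $k$, and push the stage-$(k+1)$ witnesses forward. With these points spelled out, your argument proves Theorem~\ref{thmIntro:B}; the paper's \autoref{prp:SepSubCaDim} asserts in addition that its family of good subalgebras is $\sigma$-complete and cofinal, but the analogous statement for the family of separable $B$ with $\dim(\Cu(B))\leq n$ also follows from your construction together with \autoref{prp:CuApproxCAlg} and \autoref{prp:ApproxDim}.
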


By combining Theorems~\ref{thmIntro:A} and~\ref{thmIntro:B}, we obtain the following characterization of the dimension of the Cuntz semigroup of a \ca{} in terms of its separable sub-\ca{s}:

\begin{corIntro}[\ref{prp:CharDimCa}]
Let $A$ be a \ca{}, and let $n\in\NN$.
Then $\dim(\Cu(A))\leq n$ if and only if every finite (or countable) subset of $A$ is contained in a separable sub-\ca{} $B\subseteq A$ satisfying $\dim(\Cu(B))\leq n$.
\end{corIntro}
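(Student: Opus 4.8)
The plan is to obtain the corollary as a formal synthesis of \autoref{prp:DimApproxCAlg} and \autoref{prp:SepSubCaDim}, with no work beyond bookkeeping. Let me abbreviate by $(\ast)$ the condition that every finite subset of $A$ is contained in a separable sub-\ca{} $B\subseteq A$ with $\dim(\Cu(B))\leq n$, and by $(\ast\ast)$ the analogous condition for countable subsets. I would then establish the cycle of implications
\[
\dim(\Cu(A))\leq n
\ \Longrightarrow\ (\ast\ast)
\ \Longrightarrow\ (\ast)
\ \Longrightarrow\ \dim(\Cu(A))\leq n ,
\]
whose middle step is immediate since every finite set is countable. This proves the ``finite'' and the ``countable'' formulations at once.

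For the first implication, I would take a countable subset $F\subseteq A$ and pass to the sub-\ca{} $B_0\subseteq A$ that it generates, which is separable. \autoref{prp:SepSubCaDim} then supplies a separable sub-\ca{} $B$ with $B_0\subseteq B\subseteq A$ and $\dim(\Cu(B))\leq\dim(\Cu(A))\leq n$; since $F\subseteq B_0\subseteq B$, this yields $(\ast\ast)$.

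For the last implication, assuming $(\ast)$, I would consider the family $\mathcal{F}$ of all separable sub-\ca{s} $B\subseteq A$ with $\dim(\Cu(B))\leq n$, and verify that $A$ is approximated by $\mathcal{F}$ in the sense of \autoref{prp:DimApproxCAlg}. Indeed, given $a_1,\ldots,a_k\in A$ and $\varepsilon>0$, condition $(\ast)$ places the finite set $\{a_1,\ldots,a_k\}$ inside some $B\in\mathcal{F}$, so the choice $b_j:=a_j$ already satisfies $\|b_j-a_j\|=0<\varepsilon$ and no genuine perturbation is needed. The approximation result then gives $\dim(\Cu(A))\leq\sup_{B\in\mathcal{F}}\dim(\Cu(B))\leq n$.

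I do not anticipate a genuine obstacle, since all the mathematical content is already carried by \autoref{prp:DimApproxCAlg} and \autoref{prp:SepSubCaDim}; the corollary simply runs them against one another. The only points deserving a line of justification are that a sub-\ca{} generated by countably many elements is separable, and that the finite-subset hypothesis provides precisely the degenerate approximating data required to apply the approximation theorem.
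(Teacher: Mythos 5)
Your proof is correct and takes essentially the same approach as the paper: the forward direction is exactly \autoref{prp:SepSubCaDim} (applied to the separable sub-\ca{} generated by the given finite or countable set), and the backward direction applies \autoref{prp:DimApproxCAlg} to the tautological approximation of $A$ by its separable sub-\ca{s} of dimension at most $n$. The paper's proof is this same two-step argument, stated more tersely.
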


The permanence properties from \cite{ThiVil21arX:DimCu} together with Theorems~\ref{thmIntro:A} and~\ref{thmIntro:B} show that associating to each \ca{} the dimension of its Cuntz semigroup is a well-behaved invariant that satisfies all but one property of a noncommutative dimension theory;
see \autoref{pgr:DimThy}.
The failing property is the compatibility with minimal unitizations;
see \autoref{exa:IncrDimUnitization}.
It remains open if associating to each \ca{} the dimension of the Cuntz semigroup of its minimal unitization is well-behaved;
see \autoref{qst:DimCuIsDimThy}.

To prove Theorem~\ref{thmIntro:B}, we show that every \ca{} $A$ admits a large collection of separable sub-\ca{s} $B\subseteq A$ such that the induced map $\Cu(B)\to\Cu(A)$ is an order-embedding (see \autoref{prp:SubCaWithSubCu}) and we prove the L{\"o}wenheim-Skolem condition for the dimension of \CuSgp{s}:
Given a \CuSgp{} $S$ and a countably based sub-\CuSgp{} $T_0\subseteq S$, there exists a countably based sub-\CuSgp{} $T\subseteq S$ such that $T_0\subseteq T$ and $\dim(T)\leq\dim(S)$;
see \autoref{prp:subCuDim}.

In \autoref{sec:SubCu} we investigate when a submonoid $T$ of a \CuSgp{} $S$ is a sub-\CuSgp{}.
In analogy to topological derived sets and Cantor-Bendixson derivatives, we introduce the associated \CuSgp{s} $T'$ and $\delta (T)$ (see Definitions~\ref{dfn:Derived} and~\ref{dfn:Delta}). 
In particular, we show that $T\subseteq S$ is a sub-\CuSgp{} if and only if $T=T'$; 
see \autoref{prp:CharSubCu3}. 
Further, using the sub-\CuSgp{} $\delta(T)$, we prove that the sub-\CuSgp{s} of a \CuSgp{} form a complete lattice when ordered by inclusion; 
see \autoref{prp:LatticeSubCu}.

Our results also provide a characterization of the dimension of a \CuSgp{} through its countably based sub-\CuSgp{s}:

\begin{thmIntro}[\ref{prp:CharDimCtbl}]
\label{thmIntro:CharDim}
Let $S$ be a \CuSgp, and let $n\in\NN$.
Then $\dim(S)\leq n$ if and only if every finite (or countable) subset of $S$ is contained in a countably based sub-\CuSgp{} $T\subseteq S$ satisfying $\dim(T)\leq n$.
\end{thmIntro}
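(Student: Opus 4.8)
The plan is to prove the two nontrivial implications, since the passage from the countable version of the condition to the finite version is immediate. I will show that $\dim(S)\leq n$ implies the countable version and that the finite version implies $\dim(S)\leq n$; together with the trivial implication this establishes all the stated equivalences.

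For the forward direction, assume $\dim(S)\leq n$ and let $F\subseteq S$ be countable. The first step is to enclose $F$ in a countably based sub-\CuSgp{} $T_0\subseteq S$; such a $T_0$ exists because the sub-\CuSgp{} generated by a countable subset is again countably based, which follows from the analysis of sub-\CuSgp{s} and their lattice structure in \autoref{sec:SubCu} (in particular \autoref{prp:LatticeSubCu}). Applying the L{\"o}wenheim-Skolem condition \autoref{prp:subCuDim} to $T_0$ then yields a countably based sub-\CuSgp{} $T$ with $T_0\subseteq T$ and $\dim(T)\leq\dim(S)\leq n$. Since $F\subseteq T_0\subseteq T$, this $T$ witnesses the countable version of the condition.

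For the reverse direction, assume that every finite subset of $S$ is contained in a countably based sub-\CuSgp{} of dimension at most $n$. The strategy is to show that $S$ is approximated, in the sense of \autoref{dfn:ApproxCu}, by the family $\mathcal{T}$ of all countably based sub-\CuSgp{s} $T\subseteq S$ with $\dim(T)\leq n$, and then to invoke \autoref{prp:ApproxDim}, which says that dimension at most $n$ is preserved under approximation, to conclude $\dim(S)\leq n$. To verify the approximation condition, I would collect the finitely many elements of $S$ occurring in the defining data of \autoref{dfn:ApproxCu}, together with the finitely many chosen elements lying way-below them, into a single finite set $F\subseteq S$; the hypothesis then supplies a member $T\in\mathcal{T}$ containing $F$, and the required approximants may be taken to be the relevant elements of $F$ themselves, which already lie in $T$.

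The main obstacle is the reverse direction, and specifically the bookkeeping needed to match the hypothesis with the precise formulation of \autoref{dfn:ApproxCu}. One must keep careful track of the way-below relations, using that the inclusion $T\hookrightarrow S$ is a \CuMor{} and therefore preserves suprema of increasing sequences and the way-below relation, so that the inequalities holding in $S$ among the elements of $F$ are exactly those demanded by the definition of approximation. Once this compatibility is confirmed, both the generation of countably based sub-\CuSgp{s} in the forward direction and the application of \autoref{prp:ApproxDim} in the reverse direction are routine.
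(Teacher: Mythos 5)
Your strategy is the paper's own: the forward direction via enclosure of a countable set in a countably based sub-\CuSgp{} followed by \autoref{prp:subCuDim}, and the reverse direction by checking that the sub-\CuSgp{s} of dimension at most $n$ approximate $S$ and citing \autoref{prp:ApproxDim}. Two steps, however, need repair. The enclosure step is exactly \autoref{prp:genSubCu}, and it does \emph{not} follow from \autoref{prp:LatticeSubCu}: there is no naive ``sub-\CuSgp{} generated by'' a subset. In the lattice of \autoref{prp:LatticeSubCu} the infimum of a family is $\delta$ of the intersection, which need not contain the given subset, and the sup-closure of the submonoid generated by a countable set is in general not a sub-\CuSgp{} at all. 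For instance, if $x\in S$ is not compact and $0\neq x'\ll x$, then the sup-closure of $\{0,x,2x,\ldots\}$ contains no element $y$ with $x'\ll y\ll x$, so the criterion of \autoref{prp:CharSubCu1} fails for it. One genuinely needs the construction in the proof of \autoref{prp:genSubCu}: adjoin chosen $\ll$-increasing sequences with prescribed suprema, iterate countably often, and only then sup-close.

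In the reverse direction, the approximants cannot be ``the relevant elements of $F$ themselves''. \autoref{dfn:ApproxCu} requires $x_j'\ll\iota_T(y_j)\ll x_j$, so taking $y_j=x_j$ (or $y_j=x_j'$) would force $x_j\ll x_j$, which fails for non-compact elements. You must interpolate: pick $x_j''\in S$ with $x_j'\ll x_j''\ll x_j$, observe that for each $k$
\[
\sum_{j\in J} m_k(j)x_j'' \leq \sum_{j\in J} m_k(j)x_j \ll \sum_{j\in J} n_k(j)x_j' \ll \sum_{j\in J} n_k(j)x_j''
\]
(the last step by \axiomO{3}), and apply the hypothesis to the finite set $F:=\{x_j'':j\in J\}$ to obtain $T$ containing it; then $y_j:=x_j''$ works. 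Note also that the relation $\sum_j m_k(j)y_j\ll\sum_j n_k(j)y_j$ is required to hold \emph{in} $T$, whereas the display above gives it in $S$; the transfer is not because the inclusion is a \CuMor{} (that preserves $\ll$ only in the direction $T\to S$), but because the inclusion of a sub-\CuSgp{} is an order-embedding, so \autoref{prp:CharSubCu2} gives the equivalence of the two way-below relations. With these two repairs your argument coincides with the paper's proof.
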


As an application of Theorem~\ref{thmIntro:CharDim}, we generalize some results from \cite{ThiVil21arX:DimCu} by removing the assumption of countable basedness; 
see Propositions~\ref{prp:Dim0ImplInterpol} and~\ref{prp:DimSoftPart}.

\section{Preliminaries}

In the next paragraphs, we briefly recall the definition of (abstract) Cuntz semigroups.
We refer to \cite{AraPerTom11Cu} and \cite{AntPerThi18TensorProdCu} for details.

\begin{pgr}
\emph{The Cuntz semigroup.}
Given a \ca{} $A$, we use $A_+$ to denote the set of its positive elements.
For $a,b\in A_+$, one says that $a$ is \emph{Cuntz subequivalent} to $b$, in symbols $a\precsim b$, if  $a=\lim_n r_nbr_n^*$ for some sequence $(r_n)_n$ in $A$. 
One also writes $a\sim b$, and says that $a$ is \emph{Cuntz equivalent} to $b$, if $a\precsim b$ and $b\precsim a$.

The \emph{Cuntz semigroup} of $A$ is the set of equivalence classes $\Cu(A):=(A\otimes \KK)_+/\sim$, where $A\otimes\KK$ denotes the stabilization of $A$. One endows $\Cu(A)$ with the partial order induced by $\precsim$.
Further, addition of orthogonal elements in $(A\otimes\KK)_+$ induces an abelian monoid structure on $\Cu(A)$.
This turns $\Cu(A)$ into a positively ordered monoid, that is, every element $x\in\Cu(A)$ satisfies $0\leq x$, and if $x,y,z\in\Cu(A)$ satisfy $x\leq y$, then $x+z\leq y+z$.

Given $a\in A\otimes\KK$, we denote its class in $\Cu (A)$ by $[a]$.
\end{pgr}

\begin{pgr}
\emph{Abstract Cuntz semigroups.}
In \cite{CowEllIva08CuInv} it was shown that, beyond being a positively ordered monoid, the Cuntz semigroup of a \ca{} always satisfies four additional properties.
To formulate them, we need to recall the \emph{way-below relation}:
An element $x$ in a partially ordered set is said to be \emph{way-below} (or \emph{compactly contained} in) $y$, denoted by $x\ll y$, if for every increasing sequence $(z_n)_n$ that has a supremum $z$ satisfying $y\leq z$ there exists $n\in\NN$ such that $x\leq z_n$.

The properties introduced in \cite{CowEllIva08CuInv}, and that the Cuntz semigroup of a \ca{} always satisfies, are:
\begin{enumerate}
\item[\axiomO{1}] 
Every increasing sequence has a supremum.
\item[\axiomO{2}] 
Every element is the supremum of a $\ll$-increasing sequence.
\item[\axiomO{3}] 
Given $x'\ll x$ and $y'\ll y$, we have $x'+y'\ll x+y$.
\item[\axiomO{4}] 
Given increasing sequences $(x_n)_n$ and $(y_n)_n$, we have $\sup_n x_n +\sup_n y_n= \sup_n (x_n +y_n)$.
\end{enumerate}

Moreover, it was also proved in \cite{CowEllIva08CuInv} that every *-homomorphism $\varphi\colon A\to B$ between two \ca{s} $A$ and $B$ induces an order-preserving monoid morphism $\Cu(A)\to\Cu(B)$ that also preserves suprema of increasing sequences and the way-below relation.

It follows that the Cuntz semigroup defines a functor from the category of \ca{s} and *-homomorphisms to the category $\Cu$ of \CuSgp{s} and \CuMor{s}, which are defined as follows:
A \emph{\CuSgp} (also called \emph{abstract Cuntz semigroup}) is a positively ordered monoid satisfying \axiomO{1}-\axiomO{4}.
A \emph{\CuMor} between \CuSgp{s} $S$ and $T$ is an order-preserving monoid morphism $S\to T$ that preserves suprema of increasing sequences and the way-below relation.
\end{pgr}

\begin{pgr}
\emph{Additional properties.}
In addition to \axiomO{1}-\axiomO{4}, the following properties are known to be satisfied by the Cuntz semigroup of every \ca{} (see \cite[Proposition~4.6]{AntPerThi18TensorProdCu}, \cite{Rob13Cone} and \cite[Proposition~2.2]{AntPerRobThi19arX:Edwards} respectively):
\begin{enumerate}
\item[\axiomO{5}] 
Given $x+y\leq z$, $x'\ll x$ and $y'\ll y$, there exists $c$ such that $x'+c\leq z\leq x+c$ and $y'\ll c$.
\item[\axiomO{6}]
Given $x'\ll x\leq y+z$ there exist $v\leq x,y$ and $w\leq x,z$ such that $x'\leq v+w$.
\item[\axiomO{7}]
Given $x_1'\ll x_1\leq w$ and $x_2'\ll x_2\leq w$ there exists $x$ such that $x_1',x_2'\ll x\leq w, x_1+x_2$.
\end{enumerate}

It is common to use \axiomO{5} when $y=0$, that is, for $x'\ll x\leq z$.
In this case, \axiomO{5} implies that there exists an element $c$ such that $x'+c\leq z\leq x+c$.

A Cuntz semigroup is said to be \emph{weakly cancellative} if, whenever $x+z\ll y+z$, we have $x\ll y$. 
It was shown in \cite[Theorem~4.3]{RorWin10ZRevisited} that stable rank one \ca{s} have weakly cancellative Cuntz semigroups.
\end{pgr}

The following result contains a characterization of \axiomO{5} that will be used in \autoref{prp:ApproxO5O6O7} to show that \axiomO{5} is preserved by approximation of \CuSgp{s}, and in \autoref{prp:LS-O5O6O7} to show that it satisfies the L{\"o}wenheim-Skolem condition.
Analogous characterizations of \axiomO{6} and \axiomO{7} are shown in Propositions~\ref{prp:CharO6} and~\ref{prp:CharO7} below.

Recall that a subset $B$ of a \CuSgp{} $S$ is said to be a \emph{basis} if for every $x',x\in S$ satisfying $x'\ll x$ there exists $y\in B$ such that $x'\ll y\ll x$.
A \CuSgp{} is said to be \emph{countably based} if it contains a countable basis.

Cuntz semigroups of separable \ca{s} are countably based (see, for example, \cite[Lemma~1.3]{AntPerSan11PullbacksCu}).

\begin{prp}
\label{prp:CharO5}
Let $S$ be a \CuSgp{}. 
Then $S$ satisfies \axiomO{5} if and only if there exists a basis $B\subseteq S$ with (equivalently, every basis $B\subseteq S$ has) the following property: 
for all $x',x,y',y,z',z\in B$ satisfying
\[
x+y\ll z', \quad
x'\ll x, \quad 
y'\ll y, \quad
z'\ll z,
\]
there exists $c\in B$ such that
\[
x'+c\ll z, \quad
z'\ll x+c, \andSep
y'\ll c.
\]
\end{prp}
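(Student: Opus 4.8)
The plan is to prove the three-fold equivalence by a cycle: \axiomO{5} $\Rightarrow$ [every basis has the stated property] $\Rightarrow$ [some basis has it] $\Rightarrow$ \axiomO{5}. The middle implication is immediate, since $S$ is itself a basis: given $x'\ll x$, axiom \axiomO{2} lets one interpolate an element strictly between $x'$ and $x$. So all the content sits in the first and last implications.

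For \axiomO{5} $\Rightarrow$ [every basis], fix a basis $B$ and data $x',x,y',y,z',z\in B$ as in the statement. The idea is to exploit the slack in $z'\ll z$: using \axiomO{2}, insert $z'\ll z_1\ll z_2\ll z$. Since $x+y\ll z'\le z_2$, applying \axiomO{5} to $x+y\le z_2$ yields $c_0\in S$ with $x'+c_0\le z_2$, $z_2\le x+c_0$ and $y'\ll c_0$. Now approximate $c_0$ from below by basis elements, writing $c_0=\sup_m d_m$ with $d_m\in B$ and $d_m\ll d_{m+1}$. Because $z_1\ll z_2\le x+c_0=\sup_m(x+d_m)$ (using \axiomO{4}) and $y'\ll c_0=\sup_m d_m$, a single $d_M$ with $M$ large enough satisfies $z_1\le x+d_M$ and $y'\ll d_M$. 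Setting $c:=d_M$, one checks $x'+c\le x'+c_0\le z_2\ll z$, hence $x'+c\ll z$; then $z'\ll z_1\le x+c$ gives $z'\ll x+c$; and $y'\ll c$. The key point is that $c\le c_0$ makes the ``upper'' condition automatic while taking $M$ large secures the two ``lower'' conditions, and the inserted $z_1,z_2$ convert the non-strict output of \axiomO{5} into the required strict relations.

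For [some basis] $\Rightarrow$ \axiomO{5}, I would first upgrade the hypothesis to the same property for $B=S$ itself. Given $S$-elements with $x+y\ll z'$, $x'\ll x$, $y'\ll y$, $z'\ll z$, one sandwiches each between basis elements, e.g.\ $\bar x',\bar x\in B$ with $x'\ll\bar x'\ll\bar x\ll x$, and likewise $\bar y',\bar y$ and $\bar z',\bar z$ with $z'\ll\bar z'\ll\bar z\ll z$, arranged so that $\bar x+\bar y\ll\bar z'$ (which holds because $\bar x+\bar y\ll x+y\le\bar z'$, using \axiomO{3} and $x+y\le z'\le\bar z'$). Feeding this configuration to the basis property produces $c\in B$ whose relations, combined with the sandwiching inequalities, give $x'+c\le\bar x'+c\ll\bar z\le z$, $z'\le\bar z'\ll\bar x+c\le x+c$ and $y'\le\bar y'\ll c$, i.e.\ $x'+c\ll z$, $z'\ll x+c$, $y'\ll c$. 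Since every relation in sight is $\ll$, no suprema are needed and this step is routine.

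It then remains to deduce \axiomO{5} from this property for $S$. The plan is an exhaustion argument: write $x=\sup_n a_n$, $z=\sup_n e_n$ with $\ll$-increasing basis elements, fix $y'\ll w\ll y$, and for each $n$ apply the property to a configuration with $z'$-slot $s_n\nearrow z$ to obtain $c_n$ with $s_n\le x+c_n$, $a_n'+c_n\le z$ (where $a_n'\nearrow x'$) and $w\ll c_n$. If the $c_n$ can be chosen $\ll$-increasing with supremum $c$, then \axiomO{4} gives $z=\sup_n s_n\le x+c$, $x'+c=\sup_n(a_n'+c_n)\le z$ and $y'\ll w\ll c$, which is exactly \axiomO{5}. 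The hard part will be precisely this increasing selection of the witnesses $c_n$: the property produces each one in isolation, and since a general \CuSgp{} has neither cancellation nor binary suprema, one cannot merge $c_n$ with the previous witness. This must be handled by a careful recursive construction that advances the approximations of $z$ and of $x'$ in a staggered fashion so as to keep the constraint $a_n'+c_n\le z$ intact at every stage; making this selection precise is, I expect, the technical heart of the proof.
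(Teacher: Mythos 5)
Your first two steps are sound. The implication from \axiomO{5} to the property for an arbitrary basis (inserting $z'\ll z_1\ll z_2\ll z$, applying \axiomO{5} at $z_2$, and truncating a $\ll$-increasing basis approximation of the resulting $c_0$) is correct and essentially the paper's argument, and your ``upgrade'' of the hypothesis from one basis $B$ to all of $S$ by sandwiching through $B$ is also valid (the paper skips this step and works inside $B$ directly, but it is harmless). The problem is the final implication, which you explicitly leave open: you describe the intended exhaustion ($s_n\nearrow z$, witnesses $c_n$, ``if the $c_n$ can be chosen $\ll$-increasing then \axiomO{4} finishes'') and then state that the increasing selection of the $c_n$ is the technical heart you have not carried out. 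That selection is where all the content of this direction lies, so the proposal has a genuine gap rather than a complete proof.

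The missing idea is that the $(y',y)$-slots of the property are the mechanism that chains the witnesses together. In the paper's proof one fixes a $\ll$-\emph{decreasing} sequence $x'\ll\cdots\ll x_{n+1}\ll x_n\ll\cdots\ll x_0\ll x$, elements $y'\ll c_0'\ll c_0\ll y$, and a $\ll$-increasing sequence $(z_n)_n$ with supremum $z$ and $x_0+c_0\ll z_0$. At stage $n$ one applies the property to the configuration with $x$-slot $x_n$, $x'$-slot $x_{n+1}$, $y$-slot $c_n$, $y'$-slot $c_n'$, $z'$-slot $z_n$, $z$-slot $z_{n+1}$; the output $c_{n+1}$ then satisfies $c_n'\ll c_{n+1}$ (the clause $y'\ll c$, with the \emph{previous witness} occupying the $y$-slot), $z_n\ll x_n+c_{n+1}$, and, crucially, $x_{n+1}+c_{n+1}\ll z_{n+1}$, which is exactly the input constraint needed at stage $n+1$. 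Interpolating $c_n'\ll c_{n+1}'\ll c_{n+1}$ with $z_n\ll x_n+c_{n+1}'$ produces a $\ll$-increasing sequence $(c_n')_n$ whose supremum $c$ satisfies $x'+c\le z\le x+c$ and $y'\ll c$. Two points in your sketch conflict with this and would have to change: (a) the witnesses cannot be made increasing while the $y$-data stays fixed, as in your ``$w\ll c_n$ for all $n$'' --- the only clause of the property bounding the new witness from below is $y'\ll c$, so the previous witness must be fed into the $y$-slot; (b) the approximations of $x$ must \emph{decrease} from $x$ toward $x'$, not increase toward $x$: with an increasing sequence the output clause reads $a_n+c_{n+1}\ll z_{n+1}$ rather than $a_{n+1}+c_{n+1}\ll z_{n+1}$, so the input constraint cannot be regenerated and the induction stalls after one step.
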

\begin{proof}
First, assume that $S$ satisfies \axiomO{5}, and let $B\subseteq S$ be a basis.
To verify that $B$ has the stated property, let $x',x,y',y,z',z\in B$ satisfy $x+y\ll z'$, $x'\ll x$, $y'\ll y$ and $z'\ll z$.
Choose $z''\in S$ satisfying $z'\ll z''\ll z$.
Applying \axiomO{5} for $x+y\leq z''$, $x'\ll x$ and $y'\ll y$, we obtain $a\in S$ such that
\[
x'+a\leq z''\leq x+a, \andSep y'\ll a.
\]
Using that $z'\ll z''$ and $y'\ll a$, choose $a'\in S$ such that
\[
z'\ll x+a', \andSep
y'\ll a' \ll a.
\]
Since $B$ is a basis, we obtain $c\in B$ with $a'\ll c\ll a$.
Then $c$ has the desired properties.

Next, assume that $B\subseteq S$ is a basis with the stated property.
The proof is similar to that of \cite[Theorem~4.4(1)]{AntPerThi18TensorProdCu}.
To verify that $S$ satisfies \axiomO{5}, let $x',x,y',y,z$ be elements in $S$ such that
\[
x+y\leq z,\quad 
x'\ll x, \andSep 
y'\ll y.
\]

Since $B$ is a basis, we can choose a $\ll$-decreasing sequence $(x_n)_n$ in $B$ such that
\[
x'\ll \ldots \ll x_{n+1} \ll x_n \ll \ldots \ll x_1 \ll x_0 \ll x.
\]
Further, we can choose $c_0',c_0\in B$ with
\[
y'\ll c_0'\ll c_0 \ll y.
\]
Using that $x_0+c_0\ll z$, we can take a $\ll$-increasing sequence $(z_n)_n$ in $B$ with supremum $z$, and such that $x_0+c_0\ll z_0$.

We have
\[
x_0+c_0\ll z_0, \quad
x_1\ll x_0, \quad
c_0'\ll c_0, \andSep
z_0\ll z_1.
\]
By assumption, we obtain $c_1\in B$ such that
\[
x_1+c_1 \ll z_1, \quad
z_0 \ll x_0+c_1, \andSep
c_0'\ll c_1.
\]
Choose $c_1'\in B$ such that $z_0\ll x_0+c_1'$ and $c_0'\ll c_1'\ll c_1$.

Then
\[
x_1+c_1\ll z_1, \quad
x_2\ll x_1, \quad
c_1'\ll c_1, \andSep
z_1\ll z_2.
\]
By assumption, we obtain $c_2\in B$ such that
\[
x_2+c_2 \ll z_2, \quad
z_1 \ll x_1+c_2, \andSep
c_1'\ll c_2.
\]
Choose $c_2'\in B$ such that $z_1\ll x_1+c_2'$ and $c_1'\ll c_2'\ll c_2$.

Proceeding in this manner inductively, we obtain a $\ll$-increasing sequence $(c_n')_n$ such that
\[
x'+c_n'\leq x_n+c_n\ll z_n \leq z, \andSep
z_n\ll x_n+c_{n+1}'\leq x+c_{n+1}'
\]
for each $n$.
Therefore, the supremum $c:=\sup_n c_n'$ satisfies $x'+c\leq z\leq x+c$, as desired.
\end{proof}

\begin{prp}
\label{prp:CharO6}
Let $S$ be a \CuSgp{}. 
Then $S$ satisfies \axiomO{6} if and only if there exists a basis $B\subseteq S$ with (equivalently, every basis $B\subseteq S$ has) the following property: 
for all $x',x,y',y,z',z\in B$ satisfying
\[
x\ll y'+z', \quad
x'\ll x, \quad 
y'\ll y, \quad
z'\ll z,
\]
there exist $v,w\in B$ such that
\[
x'\ll v+w, \quad
v\ll x,y, \andSep
w\ll x,z.
\]
\end{prp}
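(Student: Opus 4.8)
The plan is to mirror the structure of the proof of \autoref{prp:CharO5} very closely, since \axiomO{6} has the same overall shape as \axiomO{5}: it produces witnessing elements ($v,w$ here, $c$ there) from a containment, and the characterization reformulates this purely in terms of the way-below relation on a basis. As in the previous proof, I would prove the two implications separately, and I expect the forward direction (from \axiomO{6} to the basis property) to be the routine one, with the reverse direction (reconstructing \axiomO{6} from the basis property) being the main obstacle.

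For the forward direction, I would assume $S$ satisfies \axiomO{6} and let $B\subseteq S$ be an arbitrary basis. Given $x',x,y',y,z',z\in B$ with $x\ll y'+z'$, $x'\ll x$, $y'\ll y$, $z'\ll z$, the first step is to interpolate, using that $B$ is a basis, an element $x''$ with $x'\ll x''\ll x$; then $x''\ll x\ll y'+z'\leq y+z$, so \axiomO{6} applied to $x''\ll x\leq y+z$ yields $v_0,w_0\in S$ with $v_0\leq x,y$, $w_0\leq x,z$ and $x''\leq v_0+w_0$. Since $x'\ll x''\leq v_0+w_0$, I would then use \axiomO{1}, \axiomO{2} together with the basis to split and tighten: choose $v',w'\in S$ with $v'\ll v_0$, $w'\ll w_0$ and $x'\ll v'+w'$ (this uses that way-below elements can be absorbed into finite suprema, via \axiomO{2} applied to $v_0$ and $w_0$ and \axiomO{3}). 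Finally, replace $v',w'$ by basis elements $v,w\in B$ with $v'\ll v\ll v_0\leq x,y$ and $w'\ll w\ll w_0\leq x,z$; then $x'\ll v'+w'\ll v+w$, and $v\ll x,y$, $w\ll x,z$, as required.

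For the reverse direction, I would assume $B\subseteq S$ has the stated property and verify \axiomO{6} directly: given $x'\ll x\leq y+z$ in $S$, I must produce $v\leq x,y$ and $w\leq x,z$ with $x'\leq v+w$. Following the template of \autoref{prp:CharO5}, the idea is to build increasing sequences $(v_n),(w_n)$ in $B$ whose suprema $v:=\sup_n v_n$, $w:=\sup_n w_n$ do the job. I would first choose, using that $B$ is a basis, $\ll$-increasing sequences approximating $x$, $y$, $z$ from within $B$ with the relevant compact containments, and an auxiliary $\ll$-decreasing resolution of $x'$. At each stage the hypothesis $x_n\ll y_n'+z_n'$ (obtained from $x\leq y+z$ and the approximations) feeds into the basis property to produce $v_{n+1},w_{n+1}\in B$ with $v_{n+1}\ll x,y$, $w_{n+1}\ll x,z$ and a partial approximation of $x'$; the bookkeeping must ensure the $v_n$ and $w_n$ can be chosen increasing so that \axiomO{4} gives $v+w\geq x'$ in the limit, while $v\leq x,y$ and $w\leq x,z$ follow from passing to suprema.

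The main obstacle, as in \autoref{prp:CharO5}, is the inductive construction in the reverse direction: arranging the sequences $(v_n)_n$ and $(w_n)_n$ to be simultaneously increasing while maintaining, at every step, both the comparisons $v_n\ll x,y$ and $w_n\ll x,z$ and the approximation $x'\leq v_n + w_n$ in the limit. Because \axiomO{6} produces an \emph{unordered pair} of witnesses rather than a single element, the coherence required to make both sequences increasing is more delicate than in the \axiomO{5} case; the key technical point will be to interleave the basis property with interpolation (choosing primed intermediate elements $v_n',w_n'$ as in the \axiomO{5} proof) so that successive outputs can be dominated by the next terms. Once the two monotone sequences are in place, taking suprema and invoking \axiomO{4} completes the argument routinely.
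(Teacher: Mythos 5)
Your forward direction is correct and is exactly the paper's intention there (the paper disposes of it by saying it follows by the same method as \autoref{prp:CharO5}).

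The problem is your reverse direction. You propose to imitate the inductive construction from the proof of \autoref{prp:CharO5}: build increasing sequences $(v_n)_n$, $(w_n)_n$ in $B$ and take suprema. You yourself flag the central difficulty---arranging that both sequences be $\ll$-increasing while keeping all the side conditions---but you never resolve it, and in fact the basis property for \axiomO{6} gives you no mechanism to resolve it. In the \axiomO{5} argument the chaining works because the hypothesis slot ``$y'\ll y$'' of the basis property is instantiated at stage $n$ with $c_n'\ll c_n$, so the property itself forces the new witness to dominate the old one ($c_n'\ll c_{n+1}$). The \axiomO{6} basis property has no analogous input slot that would let you force $v_n\ll v_{n+1}$ and $w_n\ll w_{n+1}$: each application produces a fresh, unrelated pair of witnesses. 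As sketched, the induction cannot be carried out, so this is a genuine gap.

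The gap disappears once one notices that no induction is needed at all; this is where your analogy with \axiomO{5} misleads you. The reason \axiomO{5} requires a limiting argument is the conclusion $z\leq x+c$: the constructed element $c$ must satisfy an inequality against the \emph{full} element $z$, which can only be achieved as a supremum of approximants. The conclusion of \axiomO{6} is one-sided: it only asks for $x'\leq v+w$ together with $v\leq x,y$ and $w\leq x,z$, and since $x'\ll x$, everything can be witnessed in a single step. Concretely (this is the paper's proof): given $x'\ll x\leq y+z$, choose $a',a\in B$ with $x'\ll a'\ll a\ll x$; then $a\ll y+z$, so using \axiomO{2}, \axiomO{4} and the basis one finds $b',b,c',c\in B$ with $a\ll b'+c'$, $b'\ll b\ll y$ and $c'\ll c\ll z$; one application of the basis property to $(a',a,b',b,c',c)$ yields $v,w\in B$ with $x'\ll a'\ll v+w$, $v\ll a,b$ and $w\ll a,c$. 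These $v,w$ already satisfy \axiomO{6}, since $v\ll a\ll x$, $v\ll b\ll y$, $w\ll a\ll x$ and $w\ll c\ll z$.
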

\begin{proof}
Assuming that $S$ satisfies \axiomO{6}, one can use the same methods as in the proof of \autoref{prp:CharO5} to see that every basis $B\subseteq S$ satisfies the desired condition.

Next, assume that $B\subseteq S$ is a basis with the stated property.
To verify that $S$ satisfies \axiomO{6}, let $x',x,y,z\in S$ satisfy
\[
x'\ll x \leq y+z.
\]
Using that $B$ is a basis, choose $a',a\in B$ such that $x'\ll a'\ll a \ll x$. Thus, one has  $a\ll y+z$, and we can choose $b',b,c',c\in B$ satisfying
\[
a\ll b'+c', \quad
b'\ll b\ll y, \andSep
c'\ll c\ll z.
\]
By assumption, we obtain $v,w\in B$ such that 
\[
x'\ll a'\ll v+w, \quad
v\ll a,b, \andSep
w\ll a,c.
\]
Since $a\ll x$, $b\ll y$ and $c\ll z$, the elements $v$ and $w$ have the desired properties.
\end{proof}

The next results are proved with the same methods as \autoref{prp:CharO6}.
We omit the proofs.

\begin{prp}
\label{prp:CharO7}
Let $S$ be a \CuSgp{}. 
Then $S$ satisfies \axiomO{7} if and only if there exists a basis $B\subseteq S$ with (equivalently, every basis $B\subseteq S$ has) the following property: 
for all $x_1',x_1,x_2',x_2,w',w\in B$ satisfying
\[
x_1'\ll x_1\ll w', \quad
x_2'\ll x_2\ll w', \andSep
w'\ll w,
\]
there exists $x\in B$ such that
\[
x_1',x_2'\ll x\ll w,x_1+x_2.
\]
\end{prp}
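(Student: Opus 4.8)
The plan is to follow the template of \autoref{prp:CharO6}, proving the two implications separately and reducing each to a single application of \axiomO{7} together with interpolation in the basis; no supremum-of-a-sequence argument of the kind used in \autoref{prp:CharO5} should be needed, since the conclusion of \axiomO{7} only asks for $\le$ in the upper-bound relations.

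For the forward direction, I would assume $S$ satisfies \axiomO{7} and let $B$ be any basis. Given $x_1',x_1,x_2',x_2,w',w\in B$ with $x_1'\ll x_1\ll w'$, $x_2'\ll x_2\ll w'$ and $w'\ll w$, note that $x_1\le w'$ and $x_2\le w'$, and apply \axiomO{7} to $x_1'\ll x_1\le w'$ and $x_2'\ll x_2\le w'$ (using $w'$ itself as the common upper bound). This yields $a\in S$ with $x_1',x_2'\ll a$, $a\le w'$ and $a\le x_1+x_2$. Then $a\le w'\ll w$ gives $a\ll w$. The only real work is to replace $a$ by a single basis element $x$ with $x\ll a$ and $x_1',x_2'\ll x$; this is where having two inputs (rather than the single one in \axiomO{6}) makes the step slightly less immediate. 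I would handle it by writing $a=\sup_k a_k$ for a $\ll$-increasing sequence $(a_k)_k$ in $B$, which exists since $B$ is a basis. From $x_1'\ll a$ and $x_2'\ll a$ I obtain indices with $x_1'\le a_{k_1}$ and $x_2'\le a_{k_2}$, so $x:=a_{k+1}$ with $k=\max(k_1,k_2)$ satisfies $x_1',x_2'\ll x\ll a$. Chaining with $a\ll w$ and $a\le x_1+x_2$ then gives $x_1',x_2'\ll x\ll w,x_1+x_2$ with $x\in B$.

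For the converse, I would assume $B$ is a basis with the stated property and take $x_1'\ll x_1\le w$ and $x_2'\ll x_2\le w$ in $S$. First interpolate in $B$: choose $a_1',a_1,a_2',a_2\in B$ with $x_1'\ll a_1'\ll a_1\ll x_1$ and $x_2'\ll a_2'\ll a_2\ll x_2$. Since $a_1\ll x_1\le w$ and $a_2\ll x_2\le w$, writing $w$ as the supremum of a $\ll$-increasing sequence in $B$ produces a common $w'\in B$ with $a_1,a_2\ll w'\ll w$; inserting a further $w''\in B$ with $w'\ll w''\ll w$ then gives inputs $a_1'\ll a_1\ll w'$, $a_2'\ll a_2\ll w'$ and $w'\ll w''$ all lying in $B$. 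Applying the hypothesis yields $x\in B$ with $a_1',a_2'\ll x\ll w'',a_1+a_2$. Finally $x_1'\ll a_1'\ll x$ and $x_2'\ll a_2'\ll x$ give $x_1',x_2'\ll x$; the relation $x\ll w''\ll w$ gives $x\le w$; and since $a_1+a_2\ll x_1+x_2$ by \axiomO{3}, the relation $x\ll a_1+a_2\le x_1+x_2$ gives $x\le x_1+x_2$. Hence \axiomO{7} holds.

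The main obstacle, such as it is, is purely the bookkeeping forced by the two simultaneous hypotheses $x_i'\ll x_i\le w$: both directions require producing a single basis element that dominates (forward) or is dominated by (converse) two given elements at once, which in each case I would resolve by passing through a $\ll$-increasing basis sequence converging to the relevant supremum.
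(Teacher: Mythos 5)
Your proposal is correct and takes essentially the same approach as the paper: the paper omits the proof of this proposition, stating it is proved "with the same methods as \autoref{prp:CharO6}", and your argument is exactly that template (forward direction: apply \axiomO{7} in $S$ and descend to a basis element via a $\ll$-increasing sequence in $B$; converse: interpolate the given data into $B$ and apply the basis property once). Your observation that no inductive construction as in the converse of \autoref{prp:CharO5} is needed, since the conclusion of \axiomO{7} only involves inequalities rather than a two-sided containment, is also consistent with the paper's treatment.
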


\begin{prp}
\label{prp:CharCanc}
A \CuSgp{} $S$ is weakly cancellative if and only if there exists a basis $B\subseteq S$ with (equivalently, every basis $B\subseteq S$ has) the following property: 
for all $x',x,y',y,z',z\in B$ satisfying $x'\ll x$, $y'\ll y$ and $z'\ll z$ with $x+z\ll y'+z'$, we have $x' \ll y$.
\end{prp}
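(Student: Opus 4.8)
The plan is to follow the same two-part template used for \autoref{prp:CharO5} and \autoref{prp:CharO6}, as the remark preceding the statement suggests. For the forward direction, assume $S$ is weakly cancellative and let $B$ be any basis (this will simultaneously establish the parenthetical ``every basis'' version). Given $x',x,y',y,z',z\in B$ with $x'\ll x$, $y'\ll y$, $z'\ll z$ and $x+z\ll y'+z'$, I would simply note that $y'+z'\leq y+z$ since $y'\leq y$ and $z'\leq z$, whence $x+z\ll y+z$. Weak cancellativity then yields $x\ll y$, and $x'\ll x\leq y$ gives $x'\ll y$. This direction is routine and uses nothing beyond the definition.

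The substance lies in the converse: assuming some basis $B$ has the stated property, I must recover full weak cancellativity, i.e.\ deduce $x\ll y$ from $x+z\ll y+z$ for arbitrary $x,y,z\in S$. First I would use \axiomO{2} to fix $\ll$-increasing sequences $(a_n)_n$, $(b_n)_n$, $(c_n)_n$ in $B$ with suprema $x$, $y$, $z$, and then \axiomO{4} to write $y+z=\sup_n(b_n+c_n)$. Since $x+z\ll y+z$, there is an index $N$ with $x+z\leq b_N+c_N$. The key step is then, for each fixed $k\geq N$, to apply the basis property to the data $a_k\ll a_{k+1}$, $b_N\ll b_{N+1}$, and $c_N\ll c_{k+1}$: the required relation $a_{k+1}+c_{k+1}\ll b_N+c_N$ follows from \axiomO{3} (which gives $a_{k+1}+c_{k+1}\ll x+z$) together with $x+z\leq b_N+c_N$. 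The conclusion of the property is precisely $a_k\ll b_{N+1}$.

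The main obstacle — and the point where this argument genuinely departs from that of \autoref{prp:CharO6} — is that the basis property delivers only the \emph{compact} conclusion $a_k\ll b_{N+1}$, never a relation of the form $x\ll y$ between general elements, so one cannot cancel an inequality $\leq$ at the level of arbitrary elements. I would circumvent this by letting $k$ range over all $k\geq N$ while keeping the right-hand index $N+1$ \emph{fixed}: the resulting family of relations $a_k\ll b_{N+1}$ forces $x=\sup_k a_k\leq b_{N+1}$, and since $b_{N+1}\ll b_{N+2}\leq y$ we have $b_{N+1}\ll y$; combining these, $x\leq b_{N+1}\ll y$ upgrades to $x\ll y$. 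The only delicate bookkeeping is verifying $c_N\ll c_{k+1}$ uniformly in $k$, which holds because $k\geq N$ makes $c_N\ll\cdots\ll c_{k+1}$, so that the hypotheses of the basis property are met for every $k$.
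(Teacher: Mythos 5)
Your proof is correct and is essentially the proof the paper intends: the paper omits the argument for this proposition, saying only that it is ``proved with the same methods as \autoref{prp:CharO6}'', and your proposal is the natural execution of that method (basis interpolation in the forward direction, reduction to basis elements via $\ll$-increasing sequences in the converse). The wrinkle you identify correctly --- applying the basis property once for each $k\geq N$ while holding the target $b_{N+1}$ fixed, so that $x=\sup_k a_k\leq b_{N+1}\ll y$ yields $x\ll y$ --- is exactly the adaptation forced by the fact that the conclusion is a way-below relation between arbitrary elements rather than an existential statement, and you carry it out correctly.
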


We recall the definition of (covering) dimension for \CuSgp{s} from \cite[Defintion~3.1]{ThiVil21arX:DimCu}:

\begin{dfn}
\label{dfn:dim}
Let $S$ be a \CuSgp.
Given $n\in\NN$, we write $\dim (S)\leq n$ if, whenever $x'\ll x\ll y_1+\ldots +y_r$ in $S$, then there exist $z_{j,k}\in S$ for $j=1,\ldots ,r$ and $k=0,\ldots ,n$ such that: 
\begin{enumerate}[(i)]
\item 
$z_{j,k}\ll y_j$ for each $j$ and $k$;
\item 
$x'\ll \sum_{j,k} z_{j,k}$;
\item 
$\sum_{j=1}^r z_{j,k}\ll x$ for each $k=0,\ldots,n$.
\end{enumerate}

We set $\dim(S)=\infty$ if there exists no $n\in\NN$ with $\dim (S)\leq n$.
Otherwise, we let $\dim(S)$ be the smallest $n\in\NN$ such that $\dim(S)\leq n$.
We call $\dim(S)$ the \emph{(covering) dimension} of $S$.
\end{dfn}

The following result summarizes the permanence properties shown in \cite{ThiVil21arX:DimCu}.

\begin{prp}[{\cite[Propositions~3.5, 3.7, 3.9]{ThiVil21arX:DimCu}}]
\label{prp:Permanence}
Given a \CuSgp{} $S$ and an ideal $I\subseteq S$, we have:
\[
\dim(I)\leq\dim(S), \andSep
\dim(S/I)\leq\dim (S).
\]
Given \CuSgp{s} $S$ and $T$, we have:
\[
\dim(S\oplus T) = \max\{\dim(S),\dim(T)\}.
\]

Given an  inductive limit of \CuSgp{s} $S=\varinjlim_{\lambda\in\Lambda} S_{\lambda}$, we have
\[
\dim(S)\leq \liminf_{\lambda} \dim(S_{\lambda}).
\]
\end{prp}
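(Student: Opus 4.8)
The plan is to prove all four statements by a single transport principle. Since the condition $\dim(S)\le n$ of \autoref{dfn:dim} is phrased entirely in terms of the order, addition, suprema, and the way-below relation, it should be preserved by any construction compatible with this structure: in each case I would take a witnessing configuration $x'\ll x\ll y_1+\ldots+y_r$, transport it to a place where the relevant dimension bound is available, apply \autoref{dfn:dim}, and push the resulting family $(z_{j,k})$ back. For an ideal $I\subseteq S$ this is cleanest. Since $I$ is hereditary and closed under suprema of increasing sequences, for elements of $I$ the relation $\ll$ computed in $I$ agrees with the one computed in $S$; hence a configuration $x'\ll x\ll y_1+\ldots+y_r$ in $I$ is also one in $S$. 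Applying $\dim(S)\le n$ produces $z_{j,k}\in S$ with $z_{j,k}\ll y_j$, and since $y_j\in I$ and $I$ is hereditary, automatically $z_{j,k}\in I$. Conditions (i)--(iii) are then inherited verbatim, giving $\dim(I)\le\dim(S)$.

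For the quotient $S/I$, with quotient map $\pi$, I would start from $\bar{x}'\ll\bar{x}\ll\bar{y}_1+\ldots+\bar{y}_r$ and choose exact lifts $y_j$ and $x$. From $\bar{x}\le\sum_j\bar{y}_j$ one gets $x\le\sum_j y_j+e$ for some $e\in I$; absorbing $e$ into $y_1$ (which does not change $\bar{y}_1$) arranges $x\le\sum_j y_j$ in $S$. Writing $x=\sup_m x_m$ with $x_m\ll x_{m+1}$ via \axiomO{2} gives $x_m\ll\sum_j y_j$, while $\bar{x}'\ll\bar{x}=\sup_m\pi(x_m)$ gives $\bar{x}'\le\pi(x_m)$ for some $m$. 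Applying $\dim(S)\le n$ to $x_m\ll x_{m+1}\ll\sum_j y_j$ and pushing the witnesses forward through $\pi$ (which preserves $\ll$) then yields the required family $\pi(z_{j,k})$, so $\dim(S/I)\le\dim(S)$.

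For the direct sum, the order and the relation $\ll$ on $S\oplus T$ are componentwise, so a configuration in $S\oplus T$ splits into one in $S$ and one in $T$; applying $\dim(S)\le n$ and $\dim(T)\le n$ with $n=\max\{\dim(S),\dim(T)\}$ (each furnishing the same index range $k=0,\ldots,n$) and recombining coordinatewise gives $\dim(S\oplus T)\le n$. The reverse inequality is immediate once one observes that $S\cong S\oplus\{0\}$ and $T\cong\{0\}\oplus T$ are ideals of $S\oplus T$, so the ideal bound already proved yields $\dim(S),\dim(T)\le\dim(S\oplus T)$. For the inductive limit $S=\varinjlim_\lambda S_\lambda$, set $n=\liminf_\lambda\dim(S_\lambda)$ and assume $n<\infty$, so that the set of indices $\lambda$ with $\dim(S_\lambda)\le n$ is cofinal. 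Given $x'\ll x\ll\sum_j y_j$ in $S$, the central step is to invoke the standard description of inductive limits in $\Cu$ to lift this finite configuration to some stage $S_\lambda$, producing $\hat{x}'\ll\hat{x}\ll\sum_j\hat{y}_j$ with $x'\le\phi_\lambda(\hat{x}')$, $\phi_\lambda(\hat{x})\le x$ and $\phi_\lambda(\hat{y}_j)\le y_j$; transporting this up to some $\mu\ge\lambda$ with $\dim(S_\mu)\le n$, applying \autoref{dfn:dim} in $S_\mu$, and mapping the witnesses back into $S$ through $\phi_\mu$ gives the conclusion.

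The genuinely technical points are the two lifting steps, and these are where I expect the main obstacle to lie. In the quotient case it is the adjustment of the lifts by an element of $I$ so that an honest $\ll$-configuration appears in $S$ with the inequalities falling on the correct side of $\bar{x}$, which is what makes condition (iii) survive the pushforward. In the inductive-limit case it is the lifting of a finite $\ll$-configuration to a single finite stage with the sandwiching inequalities $x'\le\phi_\lambda(\hat{x}')$, $\phi_\lambda(\hat{x})\le x$ and $\phi_\lambda(\hat{y}_j)\le y_j$; this is exactly the place where the concrete construction of $\varinjlim$ in $\Cu$, together with \axiomO{2}, is needed. Once these lifts are in hand, pushing the witness family $(z_{j,k})$ through the relevant \CuMor{} is routine, since such morphisms preserve addition, order, and the way-below relation.
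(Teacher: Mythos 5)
All four of your arguments are correct, and the key facts you rely on are true: the way-below relation of an ideal $I\subseteq S$ is the restriction of that of $S$ (this needs exactly the heredity and sup-closedness you invoke, plus \axiomO{2} for the direction $\ll_I\Rightarrow\ll_S$); a quotient configuration lifts to $S$ after absorbing the error term $e\in I$ into one of the $y_j$; and $\ll$ in $S\oplus T$ is computed coordinatewise. Your derivation of $\dim(S),\dim(T)\le\dim(S\oplus T)$ by viewing $S\oplus\{0\}$ and $\{0\}\oplus T$ as ideals is a clean way to get the reverse inequality. Note that the paper itself does not prove this proposition — it is quoted from \cite[Propositions~3.5, 3.7, 3.9]{ThiVil21arX:DimCu} — so the only in-paper point of comparison is the inductive-limit part, which the paper re-derives as \autoref{prp:LimitProperties} by combining \autoref{prp:ApproxLimit} (the stages of an inductive limit approximate it, proved from the properties (L0)--(L2)) with \autoref{prp:ApproxDim} (a dimension bound passes from an approximating family to $S$). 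Your limit argument is precisely a hand-specialization of that machinery to the single configuration $x'\ll x\ll y_1+\ldots+y_r$: the lifting step you flag as the technical core is the content of \autoref{prp:ApproxLimit} (carried out via interpolation to get $y_j'\ll y_j$ with $x\ll y_1'+\ldots+y_r'$, then (L2), (L1) and directedness of $\Lambda$), and pushing the witnesses $(z_{j,k})$ forward is the content of \autoref{prp:ApproxDim}; your cofinal-transport step handles the $\liminf$ refinement, equivalently one passes to the cofinal subsystem $\{\mu:\dim(S_\mu)\le n\}$, which has the same limit. So the proposal is a correct reconstruction along what is essentially the cited paper's route, with the limit part matching the present paper's approximation-based proof.
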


\section{Approximation of \texorpdfstring{$\mathrm{Cu}$}{Cu}-semigroups and \texorpdfstring{\ca{s}}{C*-algebras}}
\label{sec:Approx}

In this section, we introduce a notion of \emph{approximation} for a \CuSgp{} $S$ by a family of \CuMor{s} $S_\lambda\to S$;
see \autoref{dfn:ApproxCu}.
The definition ensures that any `reasonable' property passes to the approximated \CuSgp{}, and we show this specifically for the property of having dimension at most $n$;
see \autoref{prp:ApproxDim}.

If $S$ is an inductive limit of a system of \CuSgp{s} $S_\lambda$, then the canonical maps $S_\lambda\to S$ approximate $S$;
see \autoref{prp:ApproxLimit}.
Another natural source of approximation comes from \ca{s}:
If a \ca{} $A$ is approximated by a family of sub-\ca{s} $A_\lambda\subseteq A$ (we recall the definition before \autoref{prp:CuApproxCAlg}), then $\Cu(A)$ is approximated by the \CuMor{s} $\Cu(A_\lambda)\to\Cu(A)$ induced by the inclusions $A_\lambda\to A$;
see \autoref{prp:CuApproxCAlg}.

Note that in \autoref{dfn:ApproxCu} below we do not insist that the \CuMor{s} $S_\lambda\to S$ are order-embeddings.
The reason is that this would exclude the abovementioned sources of approximations.
Indeed, the natural maps $S_\lambda\to S$ to an inductive limit are not necessarily order-embeddings.
Further, if $A_\lambda\subseteq A$ is a sub-\ca{}, then the induced \CuMor{} $\Cu(A_\lambda)\to\Cu(A)$ need not be an order-embedding (consider, for example, $\mathbb{C}\subseteq\mathcal{O}_2$).

\begin{dfn}
\label{dfn:ApproxCu}
Let $S$ be a \CuSgp{} and let $(S_\lambda ,\varphi_\lambda)_{\lambda\in\Lambda}$ be a family of \CuSgp{s} $S_\lambda$ and \CuMor{s} $\varphi_\lambda\colon S_\lambda\to S$. 
 
We say that the family $(S_\lambda ,\varphi_\lambda)_{\lambda\in\Lambda}$ \emph{approximates} $S$ if the following holds:
Given finite sets $J$ and $K$, 
given elements $x_j',x_j\in S$ for $j\in J$, 
and given functions $m_k,n_k\colon J\to\NN$ for $k\in K$, 
such that $x_j'\ll x_j$ for all $j\in J$ 
and such that
\[
\sum_{j\in J} m_k(j)x_j \ll \sum_{j\in J} n_k(j)x_j'
\]
for all $k\in K$,
there exist $\lambda\in\Lambda$ 
and $y_j\in S_\lambda$ for $j\in J$ 
such that $x_j'\ll \varphi_\lambda(y_j) \ll x_j$ for each $j\in J$,
and such that
\[
\sum_{j\in J} m_k(j)y_j \ll \sum_{j\in J} n_k(j)y_j
\]
for all $k\in K$.
\end{dfn}

\begin{rmk}
\label{rmk:ApproxCu}
In \autoref{dfn:ApproxCu}, we think of $J$ as the index set for a collection of variables, and for each $k\in K$ we think of the pair $(m_k,n_k)$ as the encoding of a `formula'.
We say that $S$ is approximated by the $S_\lambda$ if every finite collection of elements in $S$ that satisfy certain formulas can be approximated by a collection of elements in some $S_\lambda$ that satisfy the same formulas.

Assume that the \CuSgp{} $S$ is approximated by the family $(S_\lambda ,\varphi_\lambda)_{\lambda\in\Lambda}$.
\autoref{dfn:ApproxCu} ensures that every `reasonable' property of \CuSgp{s} passes from the approximating family to $S$.
In \autoref{prp:ApproxO5O6O7} we show this for weak cancellation, \axiomO{5}, \axiomO{6} and \axiomO{7}, and in \autoref{prp:ApproxDim} we prove it for the property `$\dim(\freeVar)\leq n$'.


We do not formalize the notion of `formula' or `reasonable property' for \CuSgp{s} since this would go into the direction of developing a model theory for \CuSgp{s}, which is an elaborate task that will be taken up elsewhere.
\end{rmk}

\begin{prp}
\label{prp:ApproxO5O6O7}
Let $S$ be a \CuSgp{} that is approximated by $(S_\lambda ,\varphi_\lambda)_{\lambda\in\Lambda}$.
If each $S_\lambda$ is weakly cancellative, then so is $S$.
Similarly, if each $S_\lambda$ satisfies \axiomO{5} (respectively, \axiomO{6} or \axiomO{7}), then so does $S$.
\end{prp}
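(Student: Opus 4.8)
The plan is to transfer each of the four properties — weak cancellation, (O5), (O6), (O7) — from the approximating semigroups $S_\lambda$ to $S$ by using the basis-level characterizations established in Propositions~\ref{prp:CharCanc}, \ref{prp:CharO5}, \ref{prp:CharO6} and~\ref{prp:CharO7}. The key observation is that each of these characterizations is stated entirely in terms of elements of a basis and the way-below relation, with hypotheses and conclusions of exactly the shape that \autoref{dfn:ApproxCu} is designed to handle: finitely many variables, constrained by relations expressible through sums $\sum m_k(j)x_j \ll \sum n_k(j)x_j'$. So the strategy is to verify the basis condition for $S$ by pulling the data down to some $S_\lambda$, applying the (already known) basis condition there, and pushing the resulting witness back up through $\varphi_\lambda$.

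Let me carry out the plan for (O5); the other three are identical in spirit. Suppose each $S_\lambda$ satisfies (O5). To show $S$ satisfies (O5), by \autoref{prp:CharO5} it suffices to exhibit \emph{one} basis of $S$ with the stated property, so I am free to take any basis $B\subseteq S$. Given $x',x,y',y,z',z\in B$ with $x+y\ll z'$, $x'\ll x$, $y'\ll y$ and $z'\ll z$, I first encode these as an instance of \autoref{dfn:ApproxCu}: the index set $J$ labels the six elements (together with auxiliary intermediate elements inserted to create the strict $\ll$-inequalities the definition requires), and the formulas $(m_k,n_k)$ encode the relation $x+y\ll z'$. Applying the approximation hypothesis yields $\lambda\in\Lambda$ and preimages $\tilde{x}',\tilde{x},\tilde{y}',\tilde{y},\tilde{z}',\tilde{z}\in S_\lambda$, each mapping by $\varphi_\lambda$ into the prescribed $\ll$-interval between the primed and unprimed originals, and satisfying the pulled-back formula $\tilde{x}+\tilde{y}\ll \tilde{z}'$ in $S_\lambda$. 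Since $S_\lambda$ satisfies (O5), its basis condition (again via \autoref{prp:CharO5}, applied to a chosen basis of $S_\lambda$ containing suitable refinements) produces $\tilde{c}\in S_\lambda$ with $\tilde{x}'+\tilde{c}\ll\tilde{z}$, $\tilde{z}'\ll\tilde{x}+\tilde{c}$ and $\tilde{y}'\ll\tilde{c}$. Setting $c:=\varphi_\lambda(\tilde{c})$ and using that $\varphi_\lambda$ preserves $\ll$ and order, together with the interpolating inequalities $x'\ll\varphi_\lambda(\tilde{x}')$, $\varphi_\lambda(\tilde{x})\ll x$, etc., I obtain $x'+c\ll z$, $z'\ll x+c$ and $y'\ll c$, which is exactly the desired conclusion (after absorbing $c$ into an element of $B$ using that $B$ is a basis).

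The genuinely delicate bookkeeping — and the main obstacle — lies in correctly matching the \emph{strictness} of the way-below relations when translating between the three layers (the given data in $B$, the pulled-back data in $S_\lambda$, and the witness pushed back to $S$). \autoref{dfn:ApproxCu} only guarantees $x_j'\ll\varphi_\lambda(y_j)\ll x_j$, so each original element is sandwiched between its image and a neighbor; one must insert enough auxiliary $\ll$-intermediate elements at the start (enlarging $J$) so that the relations needed both as \emph{hypotheses} in $S_\lambda$ and as \emph{conclusions} in $S$ survive the two translations. Concretely, I expect to interpolate an extra layer, e.g.\ choosing $x''$ with $x'\ll x''\ll x$ before applying the definition, so that strict inequalities are not degraded to non-strict ones when passing through $\varphi_\lambda$. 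Since $S_\lambda$ may not be countably based, I should invoke the ``every basis has the property'' clause of the relevant characterization rather than assuming a countable basis. Once this alignment is set up carefully for (O5), the arguments for (O6), (O7) and weak cancellation follow the same template with their respective formulas from Propositions~\ref{prp:CharO6}, \ref{prp:CharO7} and~\ref{prp:CharCanc}, so I would write the (O5) case in full and remark that the remaining three are proved mutatis mutandis.
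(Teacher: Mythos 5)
Your overall architecture---verify the basis-level condition of \autoref{prp:CharO5} (respectively \autoref{prp:CharO6}, \autoref{prp:CharO7}, \autoref{prp:CharCanc}) for $S$, pull the data down to some $S_\lambda$ via \autoref{dfn:ApproxCu}, apply the property there, and push the witness forward through $\varphi_\lambda$---is exactly the paper's. But one step, as you have written it, fails: to apply \axiomO{5} (or the basis condition of \autoref{prp:CharO5}) in $S_\lambda$ you need the relations $\tilde{x}'\ll\tilde{x}$, $\tilde{y}'\ll\tilde{y}$, $\tilde{z}'\ll\tilde{z}$ \emph{inside} $S_\lambda$, and your setup never provides them. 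The only formula you feed into \autoref{dfn:ApproxCu} is $x+y\ll z'$, so the approximation returns six elements satisfying $\tilde{x}+\tilde{y}\ll\tilde{z}'$ together with sandwich inequalities in $S$, and nothing more. The sandwiches cannot be transferred downward: from $x'\ll\varphi_\lambda(\tilde{x}')\ll x''$ and $x''\ll\varphi_\lambda(\tilde{x})\ll x$ you only learn $\varphi_\lambda(\tilde{x}')\ll\varphi_\lambda(\tilde{x})$ in $S$, and since $\varphi_\lambda$ is not assumed to be an order-embedding (the paper emphasizes this just before \autoref{dfn:ApproxCu}), this yields no relation whatsoever between $\tilde{x}'$ and $\tilde{x}$ in $S_\lambda$. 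In particular, your proposed single interpolation layer $x'\ll x''\ll x$ cannot suffice, and ``enlarging $J$'' alone does not help: what must be enlarged is the set $K$ of formulas.

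There are two ways to close this gap. (a) Keep your six-variable encoding, but add the relations $\tilde{x}'\ll\tilde{x}$, $\tilde{y}'\ll\tilde{y}$, $\tilde{z}'\ll\tilde{z}$ to the list of formulas $(m_k,n_k)$; for the hypotheses of \autoref{dfn:ApproxCu} to then hold in $S$ you need \emph{two} interpolants per pair, e.g.\ $x'\ll x^{(2)}\ll x^{(3)}\ll x$, with the variable for $x'$ carrying the pair $(x',x^{(2)})$ and the variable for $x$ carrying $(x^{(3)},x)$, so that the new formula requires exactly $x^{(2)}\ll x^{(3)}$ in $S$. (b) The paper's cleaner device: pull back only the three unprimed elements (pairs $(x',x)$, $(y',y)$, $(z',z)$, single formula $x+y\ll z'$), obtaining $u,v,w\in S_\lambda$ with $u+v\ll w$ and $x'\ll\varphi_\lambda(u)\ll x$, etc., and then manufacture the primed companions \emph{inside} $S_\lambda$ using that $\varphi_\lambda$ is a \CuMor{}: writing $u$ as the supremum of a $\ll$-increasing sequence and using $x'\ll\varphi_\lambda(u)$, one finds $u'\ll u$ with $x'\ll\varphi_\lambda(u')$, and similarly $v'\ll v$ with $y'\ll\varphi_\lambda(v')$; then plain \axiomO{5} in $S_\lambda$ applied to $u+v\leq w$ produces the witness. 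With either repair, the push-forward computation you describe goes through.
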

\begin{proof}
First, assume that each $S_\lambda$ is weakly cancellative. To see that $S$ is also weakly cancellative, we will use \autoref{prp:CharCanc}. Thus, let  $x',x,y',y,z',z\in S$ satisfy $x'\ll x$, $y'\ll y$, $z'\ll z$ and $x+z\ll y'+z'$.

Since $S$ is approximated by $(S_\lambda ,\varphi_\lambda)_{\lambda\in\Lambda}$, there exist $\lambda\in\Lambda$ and elements $u,v,w\in S_\lambda$ such that 
\[
x'\ll \varphi_\lambda (u)\ll x, \quad
y'\ll \varphi_\lambda (v)\ll y, \quad
z'\ll \varphi_\lambda (w)\ll z, \andSep 
u+w\ll v+w.
\]
Since $S_\lambda$ is weakly cancellative, one gets $u\ll v$ and, consequently,
\[
 x'\ll \varphi_\lambda (u)\ll \varphi_\lambda (v)\ll y,
\]
as required.

Next, assume that each $S_\lambda$ satisfies \axiomO{5}.
We show that $S$ satisfies the property of \autoref{prp:CharO5}.
Let $x',x,y',y,z',z\in S$ satisfy
\[
x+y\ll z', \quad
x'\ll x, \quad 
y'\ll y, \quad
z'\ll z.
\]
We need to find $c\in S$ such that
\[
x'+c\ll z, \quad
z'\ll x+c, \andSep
y'\ll c.
\]
By assumption, there exist $\lambda\in\Lambda$ and $u,v,w\in S_\lambda$ such that
\[
x'\ll\varphi_\lambda(u)\ll x, \quad
y'\ll\varphi_\lambda(v)\ll y, \quad
z'\ll\varphi_\lambda(w)\ll z, \andSep
u+v\ll w.
\]
Since $\varphi_\lambda$ is a \CuMor{} and $u+v\ll w$, we can choose $u',v'\in S_\lambda$ such that
\[
x'\ll \varphi_\lambda(u'), \quad
u'\ll u, \quad
y'\ll \varphi_\lambda(v'), \andSep
v'\ll v.
\]
Using that $S_\lambda$ satisfies \axiomO{5}, we obtain $a\in S_\lambda$ such that
\[
u'+a\leq w\leq u+a,\andSep 
v'\ll a.
\]
Then $c:=\varphi_\lambda(a)$ has the desired properties.

The statements for \axiomO{6} and \axiomO{7} are proved with similar methods, using Propositions~\ref{prp:CharO6} and~\ref{prp:CharO7} respectively.
\end{proof}

\begin{prp}
\label{prp:ApproxDim}
Let $S$ be a \CuSgp{} that is approximated by a family $(S_\lambda ,\varphi_\lambda)_{\lambda\in\Lambda}$.
Then $\dim(S)\leq\sup_{\lambda\in\Lambda} \dim (S_\lambda)$.
\end{prp}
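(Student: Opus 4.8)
The plan is to transfer a witnessing configuration for ``$\dim(S)\leq n$'' down to some $S_\lambda$ via \autoref{dfn:ApproxCu}, solve the problem there using $\dim(S_\lambda)\leq n$, and then push the solution back up through $\varphi_\lambda$. First I would dispose of the trivial case: if $\sup_{\lambda\in\Lambda}\dim(S_\lambda)=\infty$ there is nothing to prove, so I set $n:=\sup_{\lambda\in\Lambda}\dim(S_\lambda)$ and assume $n<\infty$, so that every $S_\lambda$ satisfies $\dim(S_\lambda)\leq n$. To verify $\dim(S)\leq n$, I fix $x'\ll x\ll y_1+\ldots+y_r$ in $S$ and must produce elements $z_{j,k}\in S$ as in \autoref{dfn:dim}.

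The first real step is to prepare the data for the approximation. Writing each $y_j$ as the supremum of a $\ll$-increasing sequence via \axiomO{2}, adding these suprema via \axiomO{4}, and using \axiomO{3} to keep the way-below relation, the relation $x\ll y_1+\ldots+y_r$ yields elements $y_j'\ll y_j$ with $x\ll y_1'+\ldots+y_r'$. Now I apply \autoref{dfn:ApproxCu} with index set $J=\{0,1,\ldots,r\}$, taking the pair $(x',x)$ for $j=0$ and the pairs $(y_j',y_j)$ for $j\geq 1$, and encoding the single relation $x\ll \sum_{j\geq1} y_j'$ as the formula given by $m(0)=1$, $m(j)=0$ for $j\geq1$, and $n(0)=0$, $n(j)=1$ for $j\geq1$. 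This produces $\lambda\in\Lambda$ and elements $a,b_1,\ldots,b_r\in S_\lambda$ with $x'\ll\varphi_\lambda(a)\ll x$, with $y_j'\ll\varphi_\lambda(b_j)\ll y_j$ for each $j$, and with $a\ll b_1+\ldots+b_r$ holding in $S_\lambda$.

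Next, since $\varphi_\lambda$ preserves suprema of $\ll$-increasing sequences, from $x'\ll\varphi_\lambda(a)$ I can choose $a'\ll a$ in $S_\lambda$ with $x'\ll\varphi_\lambda(a')$ (interpolating once more via \axiomO{2}). Applying $\dim(S_\lambda)\leq n$ to the configuration $a'\ll a\ll b_1+\ldots+b_r$ yields elements $w_{j,k}\in S_\lambda$, for $j=1,\ldots,r$ and $k=0,\ldots,n$, satisfying $w_{j,k}\ll b_j$, $a'\ll\sum_{j,k}w_{j,k}$, and $\sum_{j=1}^r w_{j,k}\ll a$ for each $k$. I then set $z_{j,k}:=\varphi_\lambda(w_{j,k})$ and verify the three conditions of \autoref{dfn:dim} using that $\varphi_\lambda$ is a \CuMor{}: condition (i) follows from $\varphi_\lambda(w_{j,k})\ll\varphi_\lambda(b_j)\ll y_j$, condition (ii) from $x'\ll\varphi_\lambda(a')\ll\sum_{j,k}\varphi_\lambda(w_{j,k})$, and condition (iii) from $\sum_{j=1}^r\varphi_\lambda(w_{j,k})\ll\varphi_\lambda(a)\ll x$.

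I expect the main obstacle to be organizational rather than conceptual: correctly encoding the single inequality $x\ll\sum_j y_j'$ as an admissible formula in the sense of \autoref{dfn:ApproxCu}, and carrying out the two interpolation arguments (first producing the partners $y_j'$, then reconstructing $a'$ after the transfer) so that every relation that must be strict way-below actually is. Once this bookkeeping is in place, the verification back in $S$ is a direct consequence of $\varphi_\lambda$ preserving order, addition, and the way-below relation.
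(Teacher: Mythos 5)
Your proposal is correct and follows essentially the same route as the paper's proof: prepare partners $y_j'\ll y_j$ with $x\ll y_1'+\ldots+y_r'$, transfer the configuration to some $S_\lambda$ via \autoref{dfn:ApproxCu}, interpolate to get $a'\ll a$ with $x'\ll\varphi_\lambda(a')$, apply $\dim(S_\lambda)\leq n$, and push the witnesses back through $\varphi_\lambda$. The only difference is that you spell out the encoding of the formula and the \axiomO{2}--\axiomO{4} interpolations, which the paper leaves implicit.
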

\begin{proof}
Set $n:=\sup_{\lambda\in\Lambda} \dim (S_\lambda)$, which we may assume to be finite.
To verify $\dim(S)\leq n$, let $x'\ll x\ll y_1+\ldots +y_r$ in $S$.
Choose $y_1',\ldots,y_r'\in S$ such that
\[
x'\ll x\ll y_1'+\ldots +y_r', \quad
y_1'\ll y_1, \quad\ldots, \andSep
y_r'\ll y_r.
\]

Using that $S$ is approximated by $(S_\lambda ,\varphi_\lambda)_{\lambda\in\Lambda}$, we obtain $\lambda\in\Lambda$ and elements $v,w_1,\ldots ,w_r\in S_\lambda$ such that
\[
x'\ll \varphi_\lambda(v) \ll x, \quad
y_1'\ll \varphi_\lambda(w_1) \ll y_1, \quad\ldots, \andSep
y_r'\ll \varphi_\lambda(w_r) \ll y_r,
\]
and such that
\[
v\ll w_1+\ldots+w_r.
\]

Since $\varphi_\lambda$ is a \CuMor{} and $x'\ll \varphi_\lambda(v)$, there exists $v'\in S_\lambda$ such that 
\[
x'\ll \varphi_\lambda (v'), \andSep
v'\ll v.
\]

We have  $v'\ll v\ll w_1+\cdots +w_r$ in $S_\lambda$.
Using that $\dim(S_\lambda)\leq n$, we obtain elements $z_{j,k}\in S_\lambda$ for $j=1,\ldots ,r$ and $k=0,\ldots ,n$ satisfying conditions (i)-(iii) in \autoref{dfn:dim}.
It is now easy to check that the elements $\varphi_\lambda (z_{j,k})\in S$ satisfy conditions (i)-(iii) in \autoref{dfn:dim} for $x'\ll x\ll y_1+\ldots +y_r$, as desired.
\end{proof}

\begin{prp}
\label{prp:ApproxLimit}
Let $S=\varinjlim_{\lambda\in\Lambda}S_\lambda$ be an inductive limit of \CuSgp{s}, and let $\varphi_\lambda\colon S_\lambda\to S$ be the \CuMor{s} into the limit.
Then the family $(S_\lambda ,\varphi_\lambda)_{\lambda\in\Lambda}$ approximates $S$.
\end{prp}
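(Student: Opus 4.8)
The plan is to exploit the defining feature of inductive limits in $\Cu$: any finite family of way-below relations that holds in the limit $S$ is already witnessed at some finite stage $S_\lambda$. Concretely, I will use two standard facts about the canonical \CuMor{s} $\varphi_\lambda$ and the connecting maps $\varphi_{\lambda\mu}\colon S_\lambda\to S_\mu$ (for $\lambda\leq\mu$), valid since $\Lambda$ is directed; see \cite{CowEllIva08CuInv, AntPerThi18TensorProdCu}. First, \textbf{(L1)}: for every $x'\ll x$ in $S$ there exist $\lambda\in\Lambda$ and $a\in S_\lambda$ with $x'\ll\varphi_\lambda(a)\ll x$. Second, \textbf{(L2)}: if $a,b\in S_\lambda$ satisfy $\varphi_\lambda(a)\leq\varphi_\lambda(b)$, then for every $a'\ll a$ in $S_\lambda$ there exists $\mu\geq\lambda$ with $\varphi_{\lambda\mu}(a')\leq\varphi_{\lambda\mu}(b)$.

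I would begin by realizing the data $(x_j',x_j)_{j\in J}$ at a common stage. Applying \textbf{(L1)} to each pair $x_j'\ll x_j$ and using directedness of $\Lambda$ together with finiteness of $J$, I obtain a single $\lambda$ and elements $b_j\in S_\lambda$ with $x_j'\ll\varphi_\lambda(b_j)\ll x_j$. Using \axiomO{2} in $S$ and in $S_\lambda$, I then interpolate to produce $b_j',b_j''\in S_\lambda$ with $b_j'\ll b_j''\ll b_j$ and $x_j'\ll\varphi_\lambda(b_j')$. The role of the two intermediate elements is to create room \emph{above} the $b_j'$, which will later be needed to upgrade a non-strict inequality to a way-below relation.

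Next I would transport the formulas into $S$ with room to spare and then pull them back. Since $\varphi_\lambda(b_j'')\leq\varphi_\lambda(b_j)\ll x_j$ and $x_j'\ll\varphi_\lambda(b_j')$, chaining these with the hypothesis $\sum_j m_k(j)x_j\ll\sum_j n_k(j)x_j'$ gives, for each $k\in K$,
\[
\sum_{j\in J} m_k(j)\varphi_\lambda(b_j'') \;\ll\; \sum_{j\in J} n_k(j)\varphi_\lambda(b_j'),
\]
where the strictness propagates from the middle $\ll$. Writing $A_k=\sum_j m_k(j)b_j'$, $A_k''=\sum_j m_k(j)b_j''$ and $B_k=\sum_j n_k(j)b_j'$ in $S_\lambda$, this reads $\varphi_\lambda(A_k'')\ll\varphi_\lambda(B_k)$, while \axiomO{3} yields $A_k\ll A_k''$. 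Using \axiomO{2}, choose $\tilde B_k\ll B_k$ in $S_\lambda$ with $\varphi_\lambda(A_k'')\leq\varphi_\lambda(\tilde B_k)$ (the $\ll$-increasing sequence with supremum $B_k$ is preserved by $\varphi_\lambda$). Applying \textbf{(L2)} to $\varphi_\lambda(A_k'')\leq\varphi_\lambda(\tilde B_k)$ with $A_k\ll A_k''$ produces $\mu_k\geq\lambda$ with $\varphi_{\lambda\mu_k}(A_k)\leq\varphi_{\lambda\mu_k}(\tilde B_k)$; since connecting maps are \CuMor{s} and $\tilde B_k\ll B_k$, this upgrades to $\varphi_{\lambda\mu_k}(A_k)\ll\varphi_{\lambda\mu_k}(B_k)$. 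Taking $\mu\geq\mu_k$ for all $k\in K$ and setting $y_j:=\varphi_{\lambda\mu}(b_j')\in S_\mu$, I get $\varphi_\mu(y_j)=\varphi_\lambda(b_j')$, so that $x_j'\ll\varphi_\mu(y_j)\ll x_j$, and $\sum_j m_k(j)y_j=\varphi_{\lambda\mu}(A_k)\ll\varphi_{\lambda\mu}(B_k)=\sum_j n_k(j)y_j$ for every $k$, which is exactly what \autoref{dfn:ApproxCu} requires.

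The main obstacle is the last step. The order pullback \textbf{(L2)} transports only non-strict inequalities and, worse, loses the endpoint, concluding for $a'\ll a$ rather than for $a$ itself. Producing the strict relation $\sum_j m_k(j)y_j\ll\sum_j n_k(j)y_j$ demanded by the definition of approximation is precisely why one must pre-arrange room on both sides before invoking \textbf{(L2)}: room \emph{above} the left-hand elements (the witness $A_k''$ with $A_k\ll A_k''$, coming from the $b_j''$) and room \emph{below} the right-hand elements (the witness $\tilde B_k\ll B_k$, coming from \axiomO{2}), so that a single application of \textbf{(L2)} can be sandwiched back into a way-below relation at the finite stage $\mu$.
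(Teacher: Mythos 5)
Your proof is correct, and its overall architecture matches the paper's: realize the pairs $x_j'\ll x_j$ at a common finite stage using the interpolation property of the limit (your (L1) is the paper's (L2)), form the relevant sums there, and then transport the way-below relations among those sums back to a finite stage using directedness of $\Lambda$. The genuine difference is in the transport step. The paper invokes the strong limit property from \cite[Paragraph~3.8]{ThiVil21arX:DimCu} (its (L1)): a relation $\varphi_\lambda(a)\ll\varphi_\mu(b)$ holding in $S$ is already witnessed as a $\ll$-relation at some stage $\nu$ dominating $\lambda$ and $\mu$; applied to $\varphi_\lambda\bigl(\sum_j m_k(j)\bar z_j\bigr)\ll\varphi_\lambda\bigl(\sum_j n_k(j)\bar z_j\bigr)$, this finishes the argument in one stroke, with no need for the auxiliary elements you introduce. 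You instead rely only on the weaker, standard order-characterization of inductive limits in $\mathrm{Cu}$ (non-strict inequalities pull back to a finite stage after dropping to a way-below element), and you compensate by pre-arranging room on both sides: $A_k\ll A_k''$ coming from the interpolated pairs $b_j'\ll b_j''$ via \axiomO{3}, and $\tilde B_k\ll B_k$ via \axiomO{2}, so that the non-strict pullback can be sandwiched back into a $\ll$-relation at stage $\mu_k$. In effect you re-prove, in exactly the case needed, the paper's stronger lemma from the weaker one. What your route buys is self-containedness — only the basic description of $\mathrm{Cu}$-limits from \cite{CowEllIva08CuInv, AntPerThi18TensorProdCu} is needed — at the cost of the extra \axiomO{2}/\axiomO{3} bookkeeping that the paper's packaged lemma (L1) is designed to avoid. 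All of your individual steps check out, so this is a valid alternative proof.
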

\begin{proof}
For $\lambda\leq\mu$ in $\Lambda$, let $\varphi_{\mu,\lambda}\colon S_\lambda\to S_\mu$ denote the connecting \CuMor{} of the inductive system.
We will use the following conditions, which were shown in \cite[Paragraph~3.8]{ThiVil21arX:DimCu} to characterize that $S$ is the inductive limit:
\begin{enumerate}
\item[(L0)]
We have $\varphi_{\mu}\circ\varphi_{\mu,\lambda}=\varphi_{\lambda}$ for all $\lambda\leq\mu$ in $\Lambda$;
\item[(L1)]
If $x_\lambda\in S_\lambda$ and $x_\mu\in S_\mu$ satisfy $\varphi_{\lambda}(x_\lambda)\ll\varphi_{\mu}(x_\mu)$, then there exists $\mu$ with $\lambda,\mu\leq\nu$ such that $\varphi_{\nu,\lambda}(x_\lambda)\ll\varphi_{\nu,\mu}(x_\mu)$;
\item[(L2)]
For all $x',x\in S$ satisfying $x'\ll x$ there exist $\lambda\in\Lambda$ and $x_\lambda\in S_\lambda$ such that  $x'\ll\varphi_\lambda(x_\lambda)\ll x$.
\end{enumerate}

Let $J$ and $K$ be finite sets, let $x_j',x_j\in S$ satisfy $x_j'\ll x_j$ for $j\in J$, 
and let $m_k,n_k\colon J\to\NN$ such that
\[
\sum_{j\in J} m_k(j)x_j 
\ll \sum_{j\in J} n_k(j)x_j'
\]
for all $k\in K$.

For each $j\in J$, applying (L2), we obtain $\lambda_j\in\Lambda$ and $z_j\in S_{\lambda_j}$ such that
\[
x_j' \ll \varphi_{\lambda_j}(z_j) \ll x_j.
\]
Choose $\lambda\in\Lambda$ with $\lambda_j\leq\lambda$ for all $j$, and set $\bar{z}_j:=\varphi_{\lambda,\lambda_j}(z_j)\in S_{\lambda}$ for each $j$.

Given $k\in K$, we have
\[
\varphi_{\lambda}\left( \sum_{j\in J} m_k(j)\bar{z}_j \right)
\ll \sum_{j\in J} m_k(j)x_j
\ll \sum_{j\in J} n_k(j)x_j'
\ll \varphi_{\lambda}\left( \sum_{j\in J} n_k(j)\bar{z}_j \right).
\]
Applying (L1), we obtain $\nu_k\in\Lambda$ with $\lambda\leq\nu_k$ such that
\begin{align}
\label{prp:AproxLimit:eq1}
\varphi_{\nu_k,\lambda}\left( \sum_{j\in J} m_k(j)\bar{z}_j \right)
\ll \varphi_{\nu_k,\lambda}\left( \sum_{j\in J} n_k(j)\bar{z}_j \right).
\end{align}
Choose $\nu\in\Lambda$ with $\nu_k\leq\nu$ for all $k$, and set $y_j:=\varphi_{\nu,\lambda}(\bar{z}_j )\in S_{\nu}$ for each $j$.

For each $j$, we have
\[
\varphi_\nu(y_j) = \varphi_{\lambda}(\bar{z}_j ) = \varphi_{\lambda_j}(z_j)
\]
and therefore $x_j'\ll\varphi_\nu(y_j)\ll x_j$.
Further, for $k\in K$, using \eqref{prp:AproxLimit:eq1}, we obtain
\begin{align*}
\sum_{j\in J} m_k(j)y_j 
&= \varphi_{\nu,\nu_k}\left( \varphi_{\nu_k,\lambda}\left( \sum_{j\in J} m_k(j)\bar{z}_j \right) \right) \\
&\ll \varphi_{\nu,\nu_k}\left( \varphi_{\nu_k,\lambda}\left( \sum_{j\in J} n_k(j)\bar{z}_j \right) \right)
= \sum_{j\in J} n_k(j)y_j,
\end{align*}
as desired.
\end{proof}

The next result recovers \cite[Theorem~4.5]{AntPerThi18TensorProdCu} and \cite[Proposition~3.9]{ThiVil21arX:DimCu}, and is in fact new for \axiomO{7}.

\begin{cor}
\label{prp:LimitProperties}
Let $S=\varinjlim_{\lambda\in\Lambda}S_\lambda$ be an inductive limit of \CuSgp{s}.
If each $S_\lambda$ is weakly cancellative, then so is $S$.
Similarly, if each $S_\lambda$ satisfies \axiomO{5} (respectively, \axiomO{6} or \axiomO{7}), then so does $S$.
Further, given $n\in\NN$ such that $\dim(S_\lambda)\leq n$ for each $\lambda$, then $\dim(S)\leq n$.
\end{cor}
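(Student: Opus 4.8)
The plan is to deduce this corollary directly from the approximation framework developed above, since all the substantive work has already been carried out. The key observation is that \autoref{prp:ApproxLimit} tells us precisely that the canonical \CuMor{s} $\varphi_\lambda\colon S_\lambda\to S$ into the inductive limit form a family $(S_\lambda,\varphi_\lambda)_{\lambda\in\Lambda}$ that approximates $S$ in the sense of \autoref{dfn:ApproxCu}. Thus every statement that has been shown to pass from an approximating family to the approximated semigroup applies verbatim here.

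Concretely, I would first invoke \autoref{prp:ApproxLimit} to obtain that $(S_\lambda,\varphi_\lambda)_{\lambda\in\Lambda}$ approximates $S$. For the first two assertions---that weak cancellation and each of \axiomO{5}, \axiomO{6}, \axiomO{7} pass to $S$---I would then apply \autoref{prp:ApproxO5O6O7}, whose hypotheses (that each $S_\lambda$ has the given property and that the family approximates $S$) are now met. For the dimension bound, I would apply \autoref{prp:ApproxDim}, which yields $\dim(S)\leq\sup_{\lambda\in\Lambda}\dim(S_\lambda)$; under the hypothesis $\dim(S_\lambda)\leq n$ for all $\lambda$, this supremum is at most $n$, giving $\dim(S)\leq n$ as claimed.

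There is essentially no obstacle at the level of this corollary: the genuine content lies in verifying the approximation condition for inductive limits---which was the role of the conditions (L0)--(L2) in the proof of \autoref{prp:ApproxLimit}---and in the preservation results \autoref{prp:ApproxO5O6O7} and \autoref{prp:ApproxDim}. The only minor point worth checking is that the order and monoid structure of $S$ as an inductive limit is exactly the one encoded by the maps $\varphi_\lambda$ via (L0)--(L2), so that the properties transported back along the $\varphi_\lambda$ are genuinely properties of $S$ itself; this is immediate. I would also remark, as the statement does, that the \axiomO{7} case is new, but it requires no extra argument beyond plugging \autoref{prp:CharO7} into the same approximation scheme.
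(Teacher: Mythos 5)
Your proof is correct and is exactly the paper's argument: combine \autoref{prp:ApproxLimit} (the inductive limit is approximated by the $S_\lambda$) with \autoref{prp:ApproxO5O6O7} for weak cancellation and \axiomO{5}--\axiomO{7}, and with \autoref{prp:ApproxDim} for the dimension bound.
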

\begin{proof}
This follows by combining \autoref{prp:ApproxLimit} with Propositions~\ref{prp:ApproxO5O6O7} and~\ref{prp:ApproxDim}.
\end{proof}

A \ca{} $A$ is said to be \emph{approximated} by a collection of sub-\ca{s} $A_\lambda\subseteq A$, for $\lambda\in\Lambda$, if for every finitely many elements $a_1,\ldots,a_n\in A$ and every $\varepsilon>0$ there exist $\lambda\in\Lambda$ and $b_1,\ldots,b_n\in A_\lambda$ such that $\|b_j-a_j\|<\varepsilon$ for $j=1,\ldots,n$.

\begin{prp}
\label{prp:CuApproxCAlg}
Let $A$ be a \ca{} that is approximated by a family of sub-\ca{s} $A_\lambda\subseteq A$, and let $i_\lambda\colon A_\lambda\to A$ be the inclusion maps for $\lambda\in\Lambda$.
Then, the system $(\Cu (A_\lambda),\Cu (i_\lambda))_{\lambda\in\Lambda}$ approximates $\Cu(A)$.
\end{prp}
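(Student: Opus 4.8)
The plan is to verify \autoref{dfn:ApproxCu} directly, by translating the Cuntz-semigroup data into concrete positive elements of $A\otimes\KK$, transferring the relevant relations into a single $A_\lambda$ by norm-approximation, and then reconstructing the relations \emph{intrinsically} inside $\Cu(A_\lambda)$. Throughout I would use the standard facts about the Cuntz relation (see \cite{AraPerTom11Cu}): for $a,b\in(A\otimes\KK)_+$ one has $\|a-b\|<\varepsilon\Rightarrow (a-\varepsilon)_+\precsim b$; one has $[(a-\varepsilon)_+]\ll[a]$ with supremum $[a]$ over $\varepsilon>0$; the relation $[a]\ll[b]$ holds if and only if $a\precsim (b-\mu)_+$ for some $\mu>0$; and R{\o}rdam's lemma, giving for $a\precsim b$ and each $\varepsilon>0$ an element $r$ with $(a-\varepsilon)_+=rbr^*$. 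First I would reduce to the stabilizations: since $A$ is approximated by the $A_\lambda$, the algebra $A\otimes\KK$ is approximated by the $A_\lambda\otimes\KK$ (an element of $A\otimes\KK$ lies close to one in some $M_N(A)$, whose finitely many entries are simultaneously approximable in a single $A_\lambda$). As $\Cu(A)=\Cu(A\otimes\KK)$ and $\Cu(i_\lambda)$ is induced by $A_\lambda\otimes\KK\hookrightarrow A\otimes\KK$, I may take the given $x_j,x_j'$ to be represented by elements $a_j\in (A\otimes\KK)_+$.

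Using $x_j'\ll x_j$, I would fix a single $\tau>0$ with $x_j'\ll[(a_j-\tau)_+]$ for all $j\in J$. For each $k\in K$, the hypothesis $\sum_j m_k(j)x_j\ll\sum_j n_k(j)x_j'$ combined with $x_j'\le[(a_j-\tau)_+]$ yields $[P_k]\ll[R_k]$ in $\Cu(A)$, where $P_k:=\bigoplus_j a_j^{\oplus m_k(j)}$ and $R_k:=\bigoplus_j((a_j-\tau)_+)^{\oplus n_k(j)}$. Hence $P_k\precsim (R_k-\mu_k)_+$ for some $\mu_k>0$, and R{\o}rdam's lemma provides $v_k\in A\otimes\KK$ with $(P_k-\tfrac{\tau}{4})_+=v_k(R_k-\mu_k)_+v_k^*$. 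The key structural point is that these witnesses $v_k$ are built only from the $a_j$, so \emph{no circularity} arises when I now approximate the finite list $\{a_j\}_j\cup\{v_k\}_k$ simultaneously inside one $A_\lambda\otimes\KK$: with $\rho,\theta>0$ small, I obtain $c_j,\tilde v_k$ there satisfying $\|c_j-a_j\|<\rho$ and $\|\tilde v_k-v_k\|<\theta$. Setting $\kappa:=\tau/2$, I define $b_j:=(c_j-\kappa)_+\in(A_\lambda\otimes\KK)_+$ and $y_j:=[b_j]\in\Cu(A_\lambda)$. A short computation with the perturbation facts (for $\rho<\tau/2$) gives $(a_j-\tau)_+\precsim b_j\precsim (a_j-(\kappa-\rho))_+$, whence $x_j'\ll\varphi_\lambda(y_j)=[b_j]\ll x_j$, which is the first requirement of \autoref{dfn:ApproxCu}.

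The main point, and the hard part, is the formula relations $\sum_j m_k(j)y_j\ll\sum_j n_k(j)y_j$, which must hold in $\Cu(A_\lambda)$ \emph{itself}: since $\varphi_\lambda$ need not be an order-embedding, it is not enough that they hold for the images in $\Cu(A)$. Writing $G_k:=\bigoplus_j b_j^{\oplus m_k(j)}=(P_k^c-\kappa)_+$ and $H_k:=\bigoplus_j b_j^{\oplus n_k(j)}$, with $P_k^c:=\bigoplus_j c_j^{\oplus m_k(j)}$, I would establish $G_k\precsim (H_k-\tfrac{\tau}{2})_+$ \emph{inside} $A_\lambda\otimes\KK$, which yields $[G_k]\le[(H_k-\tfrac{\tau}{2})_+]\ll[H_k]$ as required. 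Here I use that the cutdown $R_k^c:=\bigoplus_j((c_j-\tau)_+)^{\oplus n_k(j)}$ equals $(H_k-(\tau-\kappa))_+$, so $R_k^c\precsim H_k$ with strict way-below class. I then transfer the witness: the element $D_k:=\tilde v_k(R_k^c-\mu_k)_+\tilde v_k^*$ lies in $A_\lambda\otimes\KK$ and satisfies $D_k\precsim R_k^c$ there, while a triangle-inequality estimate bounds $\|D_k-(P_k^c-\tfrac{\tau}{4})_+\|$ by a quantity $\sigma(\rho,\theta)$ that tends to $0$ as $\rho,\theta\to0$ (its size is controlled by $\max_k\|v_k\|$ and $\|R_k\|$, both fixed before $\rho,\theta$ are chosen).

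The decisive balancing is that the slack $\kappa=\tau/2$ in $G_k=(P_k^c-\kappa)_+$ is a \emph{fixed} positive number determined before $\rho,\theta$, whereas the transfer error $\tfrac{\tau}{4}+\sigma(\rho,\theta)$ tends to $\tfrac{\tau}{4}<\kappa$; hence for $\rho,\theta$ small enough one has $\tfrac{\tau}{4}+\sigma(\rho,\theta)\le\kappa$, so that $G_k=(P_k^c-\kappa)_+\precsim (P_k^c-\tfrac{\tau}{4}-\sigma)_+\precsim D_k\precsim R_k^c$ inside $A_\lambda\otimes\KK$ (the middle step by the perturbation fact applied in $A_\lambda\otimes\KK$). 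This closes the argument. I expect the main obstacle to be exactly this intrinsic witnessing in $\Cu(A_\lambda)$: one must extract concrete intertwiners from the abstract $\ll$-relations, move them into a single $A_\lambda$ without creating a circular dependence on the approximants $c_j$, and absorb the unavoidable norm-perturbations into a cutdown of fixed size rather than one shrinking with the approximation quality.
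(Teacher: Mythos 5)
Your proposal is correct and follows essentially the same strategy as the paper's proof: fix positive representatives and a uniform cut-down parameter, extract intertwiners in $A\otimes\KK$ witnessing the compact containments (the paper uses approximate intertwiners straight from the definition of Cuntz subequivalence, you use exact ones from R{\o}rdam's lemma), approximate representatives and intertwiners simultaneously in a single $A_\lambda$, and then recover the relations intrinsically in $\Cu(A_\lambda)$ via perturbation estimates and a two-level cut-down whose slack is fixed before the approximation tolerance is chosen. The only gloss is that the approximants $c_j$ must be arranged to be positive elements of $A_\lambda\otimes\KK$ (the paper handles this with its Claim~3), a routine fix that does not affect your argument.
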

\begin{proof}
We may assume that $A$ and $A_\lambda$ are stable for every $\lambda\in\Lambda$.
We begin with three claims. Since they are simple computations, we omit their proof (for Claim~1, one can approximate the function $(t-\varepsilon )_+$ by a polynomial).

\textbf{Claim 1.}
\emph{For any $\varepsilon,\delta >0$ and $a\in A_+$, there exists $\sigma>0$ such that, whenever $b\in A_+$ satisfies $\Vert a-b \Vert\leq \sigma$, we have $\Vert (a-\varepsilon)_+ - (b-\varepsilon)_+ \Vert\leq \delta$.}

\textbf{Claim 2.}
\emph{Let $\varepsilon>0$ and let $a,b,r\in A$ be such that $\Vert a-rbr^*\Vert <\varepsilon$. 
Then, there exists $\delta>0$ such that for every $c,d,s\in A$ with 
\[
\Vert c-a\Vert<\delta, \quad 
\Vert d-b\Vert<\delta, \andSep
\Vert s-r\Vert<\delta
\]
one has
\[
\Vert c-sds^*\Vert <2\varepsilon .
\]
}

\textbf{Claim 3.}
\emph{
Given $a\in A_+$ and $\varepsilon>0$, there exists $\delta>0$ such that for every $b\in A$ with $\|b-a\|<\delta$ we have $\|(b^*b)^{1/2}-a\|<\varepsilon$.
}

 

Now let $J$ and $K$ be finite sets, and take elements $x_j',x_j\in \Cu(A)$ for $j\in J$, 
and functions $m_k,n_k\colon J\to\NN$ for $k\in K$, 
such that $x_j'\ll x_j$ for all $j\in J$ 
and such that
\[
\sum_{j\in J} m_k(j)x_j \ll \sum_{j\in J} n_k(j)x_j'
\]
for all $k\in K$. 
We may assume that $(\sum_{j\in J} m_k(j))(\sum_{j\in J} n_k(j))\neq 0$ for every $k$.

For each $j\in J$, let $a_j\in A_+$ be such that $[a_j]=x_j$. 
Since~$J$ is finite, there exists $\varepsilon >0$ such that 
\[
x_j'\ll [(a_j - 2\varepsilon )_+]\ll [a_j]=x_j
\]
for every $j\in J$.

For each $k\in K$, we have
\[
\left[ \bigoplus_{j\in J} a_j^{\oplus m_k(j)} \right]
=\sum_{j\in J} m_k(j)x_j 
\ll \sum_{j\in J} n_k(j)x_j'
\leq \left[ \bigoplus_{j\in J} (a_j-2\varepsilon )_+^{\oplus n_k(j)} \right],
\]
which allows us to take $r_k\in A$ satisfying
\[
\left\Vert \bigoplus_{j\in J} a_j^{\oplus m_k(j)} - r_k\left( \bigoplus_{j\in J} (a_j-2\varepsilon )_+^{\oplus n_k(j)} \right) r_k^*\right\Vert
< \frac{\varepsilon}{2}.
\]

For each $k$, let $\delta_{k}$ be the bound given by Claim 2 for the previous inequality, and take $\delta >0$ such that 
\[
\delta <\min_{k\in K} \frac{\delta_{k}}{(\sum_{j\in J} m_k(j))(\sum_{j\in J} n_k(j))},\andSep 
\delta < \varepsilon .
\]
Then, using Claim 1, let $\sigma>0$ satisfy $\sigma\leq \delta$ and such that for every $j\in J$ and $b\in A_+$ with $\Vert a_j-b\Vert\leq \sigma$, we have $\Vert (a_j-2\varepsilon )_+ - (b-2\varepsilon )_+\Vert\leq \delta$.

Since the sub-\ca{s} $A_\lambda$ approximate $A$, and using Claim~3, there exist $\lambda\in\Lambda$ and elements $s_k\in A_\lambda$ and $b_j\in (A_\lambda )_+$ such that
\[
\Vert s_k-r_k\Vert \leq\sigma ,\andSep 
\Vert b_j-a_j\Vert \leq\sigma
\]
for each $k\in K$ and $j\in J$. 

By the choice of $\sigma$, note that we also have $\Vert (b_j -2\varepsilon )_+ - (a_j -2\varepsilon )_+\Vert \leq \delta $ for each $j\in J$.
Using that $\Vert (b_j - \varepsilon)_+ - a_j\Vert <2\varepsilon $ in the first step, and that $\Vert b_j-a_j\Vert <\varepsilon$ in the second step, we note that the element $[(b_j -\varepsilon )_+]\in\Cu (A)$ satisfies
\[
[(a_j-2\varepsilon )_+]\ll [(b_j -\varepsilon )_+]
\ll [a_j]
\]
for every $j\in J$.

Further, for each $k\in K$, we have
\[
\left\Vert \bigoplus_{j\in J} a_j^{\oplus m_k(j)} - \bigoplus_{j\in J} b_j^{\oplus m_k(j)} \right\Vert 
\leq \sum_{j\in J} m_k(j) \Vert a_j-b_j \Vert 
\leq \sum_{j\in J} m_k (j) \delta < \delta_k
\]
and, similarly, 
\[
\left\Vert \bigoplus_{j\in J} (a_j-2\varepsilon )_+^{\oplus n_k(j)} -
\bigoplus_{j\in J}(b_j-2\varepsilon )_+^{\oplus n_k(j)} \right\Vert 
\leq \sum_{j\in J} n_k(j)\delta < \delta_k.
\]
Thus, it follows from Claim 2 that, for every $k\in K$, we get
\[
\left\Vert \bigoplus_{j\in J} b_j^{\oplus m_k(j)} 
- c_k\left( \bigoplus_{j\in J}(b_j-2\varepsilon )_+^{\oplus n_k(j)}\right) c_k^*\right\Vert
< 2\frac{\varepsilon}{2}
=\varepsilon
\]
and, consequently,
\[
\sum_{j\in J} m_k (j) [(b_j-\varepsilon )_+]
\leq \sum_{j\in J} n_k (j) [(b_j-2\varepsilon )_+]
\]
in $\Cu(A_\lambda )$.

Recall that $i_\lambda \colon A_\lambda\to A$ denotes the inclusion map. 
Using that $[(a_j-2\varepsilon )_+]\ll [(b_j -\varepsilon )_+]\ll [a_j]$ in $\Cu (A)$ and $[(b_j -2\varepsilon )_+]\ll [(b_j -\varepsilon )_+]$ in $\Cu (A_\lambda)$, one notes that the elements $[(b_j -\varepsilon )_+]\in \Cu (A_\lambda)$ satisfy 
\[
x_j'
\ll [(a_j-2\varepsilon )_+]
\ll \Cu (i_\lambda )([(b_j -\varepsilon )_+])
\ll [a_j]
= x_j
\]
for every $j\in J$, and
\[
\sum_{j\in J} m_k (j) [(b_j-\varepsilon )_+]
\leq \sum_{j\in J} n_k (j) [(b_j-2\varepsilon )_+]
\ll \sum_{j\in J} n_k (j) [(b_j-\varepsilon )_+]
\]
for every $k\in K$, as desired.
\end{proof}

\begin{thm}
\label{prp:DimApproxCAlg}
Let $A$ be a \ca{} that is approximated by a family of sub-\ca{s} $A_\lambda\subseteq A$, for $\lambda\in\Lambda$.
Then $\dim(\Cu(A))\leq\sup_{\lambda\in\Lambda}\dim(\Cu(A_\lambda))$.
\end{thm}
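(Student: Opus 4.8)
The plan is to deduce this statement immediately by chaining the two propositions that precede it, exploiting the two-level architecture set up in this section: the work of transporting the $C^*$-algebraic hypothesis to the abstract $\mathrm{Cu}$-semigroup setting has already been carried out in \autoref{prp:CuApproxCAlg}, and the dimension estimate at the abstract level is exactly \autoref{prp:ApproxDim}. So there is essentially no new content to produce here; the proof is a composition of results.

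Concretely, I would first let $i_\lambda\colon A_\lambda\to A$ denote the inclusion maps and apply \autoref{prp:CuApproxCAlg}. Since $A$ is approximated by the sub-\ca{s} $A_\lambda$, that proposition tells us precisely that the family $(\Cu(A_\lambda),\Cu(i_\lambda))_{\lambda\in\Lambda}$ approximates the \CuSgp{} $\Cu(A)$ in the sense of \autoref{dfn:ApproxCu}. This is the only step that uses the $C^*$-algebraic structure of $A$.

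Having reduced matters to an abstract statement about a \CuSgp{} that is approximated by a family of \CuMor{s}, I would then invoke \autoref{prp:ApproxDim} with $S=\Cu(A)$ and $S_\lambda=\Cu(A_\lambda)$. That proposition yields
\[
\dim(\Cu(A))\leq\sup_{\lambda\in\Lambda}\dim(\Cu(A_\lambda)),
\]
which is exactly the desired inequality, completing the argument.

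The step I expect to carry all the difficulty is the first one, but that difficulty has been isolated and dispatched already in \autoref{prp:CuApproxCAlg}, whose proof requires the perturbation estimates (Claims~1--3) relating norm-closeness of positive elements to the comparison relation in the Cuntz semigroup. The point of the approximation formalism in \autoref{dfn:ApproxCu} is precisely that, once it is established at the level of Cuntz semigroups, the dimension bound follows formally; the functional that encodes ``$\dim(\freeVar)\leq n$'' is exactly the kind of finite first-order condition that \autoref{prp:ApproxDim} shows is preserved under approximation. Thus the final theorem requires no further estimation and reduces to citing the two propositions in sequence.
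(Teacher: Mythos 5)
Your proof is correct and matches the paper's own argument exactly: both apply \autoref{prp:CuApproxCAlg} to see that $(\Cu(A_\lambda),\Cu(i_\lambda))_{\lambda\in\Lambda}$ approximates $\Cu(A)$, and then conclude via \autoref{prp:ApproxDim}. No difference in approach or content.
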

\begin{proof}
By \autoref{prp:CuApproxCAlg}, we know that the system $(\Cu (A_\lambda),\Cu (i_\lambda))_{\lambda\in\Lambda}$ approximates $\Cu (A)$. 
Thus, the result follows from \autoref{prp:ApproxDim}.
\end{proof}

\section{The lattice of \texorpdfstring{sub-$\mathrm{Cu}$}{sub-Cu}-semigroups}
\label{sec:SubCu}

In this section, we provide characterizations for when a submonoid of a \CuSgp{} is a sub-\CuSgp, which will be used in \autoref{sec:reduction}.
In particular, given a \CuSgp{} $S$, we construct for every submonoid $T\subseteq S$ an associated sup-closed submonoid $\overline{T}^{\txtSup}\subseteq S$ (see \autoref{dfn:SupClosure}) and a `derived' submonoid $T'\subseteq S$ (see \autoref{dfn:Derived}).
We show that a submonoid $T\subseteq S$ is a sub-\CuSgp{} if and only if $T=T'$;
see \autoref{prp:CharSubCu3}.

We also describe, for every submonoid $T\subseteq S$, the largest sub-\CuSgp{} contained in $\overline{T}^{\txtSup}$.
This construction is used in \autoref{prp:LatticeSubCu} to prove that the collection of sub-\CuSgp{s} of a \CuSgp{} is a complete lattice.

\begin{dfn}
Given a \CuSgp{} $S$, we say that a submonoid $T\subseteq S$ is a \emph{sub-\CuSgp{}} if $T$ is a \CuSgp{} with respect to the order induced by $S$ and if the inclusion map $T\to S$ is a \CuMor.
\end{dfn}

The next results provide characterizations of sub-\CuSgp{s}.
We omit the straightforward proofs.

\begin{lma}
\label{prp:CharSubCu1}
Let $S$ be a \CuSgp. 
Then a submonoid $T\subseteq S$ is a sub-\CuSgp{} if and only if it is closed under passing to suprema of increasing sequences and for every $x'\in S$ and $x\in T$ with $x'\ll x$ there exists $y\in T$ such that $x'\ll y\ll x$.
\end{lma}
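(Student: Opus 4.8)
The plan is to establish both directions of the biconditional in \autoref{prp:CharSubCu1}. Recall that a sub-\CuSgp{} is a submonoid $T\subseteq S$ that is itself a \CuSgp{} (in the induced order) such that the inclusion $\iota\colon T\to S$ is a \CuMor. The cleanest route is to unwind what each of these two conditions demands and match them against the stated criteria: closure under suprema of increasing sequences, and the ``interpolation'' property that whenever $x'\ll x$ in $S$ with $x\in T$, there is some $y\in T$ with $x'\ll y\ll x$. First I would note that the way-below relation ``$\ll$'' in $T$ refers to the induced order, so part of the care in the proof is distinguishing way-below computed in $T$ from way-below computed in $S$.

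\textbf{The forward direction.}
Assume $T$ is a sub-\CuSgp. Since $T$ satisfies \axiomO{1} and the inclusion preserves suprema of increasing sequences, any increasing sequence in $T$ has a supremum in $T$ that agrees with its supremum in $S$; this gives closure under suprema of increasing sequences. For the interpolation property, suppose $x'\in S$ and $x\in T$ with $x'\ll x$ in $S$. Since $T$ satisfies \axiomO{2}, write $x=\sup_n x_n$ for a $\ll_T$-increasing sequence $(x_n)_n$ in $T$; because the inclusion preserves suprema, this is also the supremum in $S$. Then $x'\ll x=\sup_n x_n$ in $S$ forces $x'\ll x\leq x_N$, wait---more carefully, $x'\ll x$ means $x'\leq x_N$ for some $N$; taking $y:=x_{N+1}\in T$ and using $x_N\ll_T x_{N+1}$, one checks $x'\ll y\ll x$, where the final ``$\ll$'' can be read in $S$ since the inclusion preserves $\ll$. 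This yields the desired $y\in T$.

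\textbf{The converse direction.}
Conversely, assume $T$ is a submonoid closed under suprema of increasing sequences and satisfying the interpolation property. I would verify the four axioms \axiomO{1}--\axiomO{4} for $T$ and simultaneously that $\iota$ is a \CuMor. Axiom \axiomO{1} is exactly closure under suprema. For \axiomO{2}, given $x\in T$, use the interpolation property repeatedly: pick a $\ll_S$-increasing sequence in $S$ converging to $x$, then replace each term by an element of $T$ lying between consecutive terms, obtaining a $\ll$-increasing sequence in $T$ with supremum $x$. Axioms \axiomO{3} and \axiomO{4} are inherited because the relevant sums and suprema in $T$ coincide with those in $S$ (using closure under suprema and that $T$ is a submonoid), so they transfer directly from $S$. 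That $\iota$ preserves suprema of increasing sequences is again closure under suprema, and that it preserves $\ll$ follows once one checks that $x\ll_T y$ implies $x\ll_S y$, which holds because suprema computed in $T$ agree with those in $S$.

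\textbf{Main obstacle.}
The subtle point---and the reason the authors call the proof ``straightforward'' but still worth stating---is the bookkeeping between $\ll_T$ and $\ll_S$. The interpolation property is precisely what guarantees that the restriction of $\ll_S$ to $T$ is compatible with $\ll_T$, so that $T$ inherits a genuine \CuSgp{} structure rather than a defective one. I would expect the main care to lie in showing that suprema of increasing sequences computed inside $T$ genuinely coincide with those computed in $S$ (so that no information is lost in either direction), since this coincidence is what makes \axiomO{3}, \axiomO{4}, and the preservation of $\ll$ fall out cleanly.
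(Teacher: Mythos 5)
Your strategy and your forward direction are fine, but the converse direction has a genuine gap at precisely the two points where the interpolation hypothesis must do its work. First, your construction for \axiomO{2} in $T$ --- ``replace each term by an element of $T$ lying between consecutive terms'' --- misapplies the hypothesis: interpolation is only available when the \emph{upper} element lies in $T$, so from $x_n\ll x_{n+1}$ (both terms merely in $S$) you cannot produce $y_n\in T$ with $x_n\ll y_n\ll x_{n+1}$. You may only interpolate against the top element $x\in T$, obtaining $y_n\in T$ with $x_n\ll y_n\ll x$; but then the $y_n$ need not be increasing, and fixing this is the actual crux. The repair is a recursive interleaving: given $y_k\in T$ with $y_k\ll x=\sup_n x_n$ (in $S$), choose $n_{k+1}>n_k$ with $y_k\leq x_{n_{k+1}}$, then interpolate $x_{n_{k+1}+1}\ll x$ to get $y_{k+1}\in T$ with $x_{n_{k+1}+1}\ll y_{k+1}\ll x$; this forces $y_k\leq x_{n_{k+1}}\ll x_{n_{k+1}+1}\ll y_{k+1}$, so the sequence $(y_k)_k$ is $\ll$-increasing in $S$ and has supremum $x$.

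Second, your justification that the inclusion preserves $\ll$ is backwards. You claim $x\ll_T y$ implies $x\ll_S y$ ``because suprema computed in $T$ agree with those in $S$,'' but agreement of suprema yields only the \emph{opposite} implication ($x\ll_S y$ implies $x\ll_T y$, since the defining condition of $\ll_T$ quantifies over fewer sequences). The implication you need is simply false without interpolation: in $S=[0,\infty]$ the submonoid $T=\{0,\infty\}$ is closed under suprema of increasing sequences, and $\infty\ll_T\infty$, yet $\infty\not\ll\infty$ in $S$ (and indeed $T$ fails the interpolation property, so it is not a sub-\CuSgp{}). The correct argument again uses the sequence $(y_k)_k$ built above: if $x\ll_T y$, take such a sequence in $T$ that is $\ll$-increasing in $S$ with supremum $y$; then $x\leq y_k$ for some $k$, whence $x\leq y_k\ll y$ in $S$. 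So the main care lies not in the agreement of suprema (which is immediate from closure) but in this interleaving construction, which is the essential content of the lemma and is missing from your proof; the paper omits the proof as straightforward, but any complete argument must contain this step.
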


\begin{lma}
\label{prp:CharSubCu2}
Let $S,T$ be \CuSgp{s}, and let $\varphi\colon T\to S$ be a \CuMor.
Then the following are equivalent:
\begin{enumerate}
\item
$\varphi$ is an order-embedding, that is, $x,y\in T$ satisfy $x\leq y$ if (and only if) $\varphi(x)\leq\varphi(y)$;
\item
$x,y\in T$ satisfy $x\ll y$ if (and only if) $\varphi(x)\ll\varphi(y)$;
\item
$\varphi(T)\subseteq S$ is a sub-\CuSgp{} and $\varphi\colon T\to\varphi(T)$ is an isomorphism.
\end{enumerate}
\end{lma}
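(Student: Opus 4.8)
The plan is to run the cycle $(1)\Rightarrow(2)\Rightarrow(1)$ together with $(1)\Leftrightarrow(3)$, using throughout that a \CuMor{} is automatically order-preserving, $\ll$-preserving, and supremum-preserving for increasing sequences. Consequently, in each of (1) and (2) the forward implication is free, and all the content sits in the reverse (the "and only if") direction. The single technical tool I would use to transfer between $\leq$ and $\ll$ is \axiomO{2}: every element is the supremum of a $\ll$-increasing sequence, and $\varphi$ carries such a supremum to the corresponding supremum in $S$.

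For $(1)\Rightarrow(2)$, suppose $\varphi(x)\ll\varphi(y)$. Write $y=\sup_n y_n$ with $y_n\ll y_{n+1}$ by \axiomO{2}; since $\varphi$ preserves suprema of increasing sequences, $\varphi(y)=\sup_n\varphi(y_n)$, so $\varphi(x)\leq\varphi(y_n)$ for some $n$. The order-embedding hypothesis gives $x\leq y_n\ll y$, whence $x\ll y$. For $(2)\Rightarrow(1)$, suppose $\varphi(x)\leq\varphi(y)$. Write $x=\sup_n x_n$ with $x_n\ll x$; then $\varphi(x_n)\ll\varphi(x)\leq\varphi(y)$, so $\varphi(x_n)\ll\varphi(y)$, and by the reverse way-below implication of (2) we get $x_n\ll y$, hence $x_n\leq y$. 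Taking the supremum over $n$ yields $x\leq y$.

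For $(1)\Rightarrow(3)$, I would first note that $\varphi$ is injective (if $\varphi(x)=\varphi(y)$ then $x\leq y$ and $y\leq x$), so $\varphi\colon T\to\varphi(T)$ is a bijection that both preserves and reflects the order, i.e.\ an order-isomorphism. To see that $\varphi(T)$ is a sub-\CuSgp{}, I would apply \autoref{prp:CharSubCu1}: it is a submonoid since $0=\varphi(0)$ and $\varphi(a)+\varphi(b)=\varphi(a+b)$; it is closed under suprema of increasing sequences because such a sequence in $\varphi(T)$ lifts, via order-reflection, to an increasing sequence in $T$ whose supremum $\varphi$ sends back into $\varphi(T)$ (here one uses that $\varphi$ preserves suprema, so the supremum computed in $S$ really lands in $\varphi(T)$); and the interpolation condition of \autoref{prp:CharSubCu1} is obtained by the same \axiomO{2} decomposition as above. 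Finally, an additive order-isomorphism automatically preserves suprema of increasing sequences and hence the way-below relation, since both are order-theoretic notions, so $\varphi\colon T\to\varphi(T)$ is an isomorphism of \CuSgp{s}. For $(3)\Rightarrow(1)$, the inverse $\varphi^{-1}\colon\varphi(T)\to T$ is a \CuMor{}, hence order-preserving, and since $\varphi(T)$ carries the order induced from $S$, the relation $\varphi(x)\leq\varphi(y)$ gives $x=\varphi^{-1}(\varphi(x))\leq\varphi^{-1}(\varphi(y))=y$.

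The main obstacle I anticipate is purely the bookkeeping around the reverse directions: reflecting order back through $\varphi$ in $(2)\Rightarrow(1)$ and reflecting $\ll$ in $(1)\Rightarrow(2)$ cannot be done directly and must each be routed through an \axiomO{2} decomposition combined with supremum-preservation. A secondary point needing care is verifying in $(1)\Rightarrow(3)$ that the supremum of an increasing sequence, computed in $S$, genuinely lies in $\varphi(T)$ so that closure holds; this again rests on $\varphi$ preserving suprema rather than on any property internal to $\varphi(T)$.
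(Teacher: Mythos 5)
Your proof is correct, and it is precisely the ``straightforward'' argument the paper has in mind when it omits the proof of this lemma: the forward directions are free from the definition of a \CuMor{}, and each reverse direction is recovered by an \axiomO{2} decomposition combined with preservation of suprema, with \autoref{prp:CharSubCu1} handling the verification that $\varphi(T)$ is a sub-\CuSgp{}. All steps check out, including the two points you flag as delicate (reflecting $\leq$ and $\ll$ through $\varphi$, and that suprema of increasing sequences in $\varphi(T)$ computed in $S$ land back in $\varphi(T)$).
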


\begin{rmk}
Let us recall the notion of subobjects from category theory.
Let $\mathcal{C}$ be a category, and let $X$ be an object in $\mathcal{C}$.
Given monomorphisms $\alpha\colon Y\to X$ and $\beta\colon Z\to X$, one sets $\alpha\sim\beta$ if there exists an isomorphism $\gamma\colon Y\to Z$ such that $\beta\circ\gamma=\alpha$.
This defines an equivalence relation on the class of monomorphisms to~$X$, and a subobject of $X$ is defined as an equivalence class of this relation.
We refer to \cite[Section~4.1]{Bor94HandbookCat1} for details.

Let $S$ be a \CuSgp, and let $T\subseteq S$ be a sub-\CuSgp.
It is easy to verify that the inclusion map $T\to S$ is a monomorphism in the category $\mathrm{Cu}$, whence every sub-\CuSgp{} of $S$ naturally is a subobject.
The converse holds if and only if the following question has a positive answer.
\end{rmk}

\begin{qst}
Is every monomorphism in the category $\mathrm{Cu}$ an order-embedding?
\end{qst}

\begin{dfn}
\label{dfn:SupClosure}
Let $S$ be a \CuSgp, and let $T\subseteq S$ be a subset.
We set
\[
\overline{T}^{\txtSeq} 
:= \left\{ \sup_n x_n \in S : (x_n)_n \text{ is an increasing sequence in } T \right\}.
\]

We define $\overline{T}^{(\alpha)}$ for every ordinal $\alpha$ by setting $T^{(0)} := T$, $T^{(1)} := \overline{T}^{\txtSeq}$, and by using (transfinite) induction:
\begin{align*}
\overline{T}^{(\alpha+1)} &:= \overline{T^{(\alpha)}}^{\txtSeq}, \\
\overline{T}^{(\lambda)} &:= \bigcup_{\alpha<\lambda}\overline{T}^{(\alpha)},\quad\text{ if $\lambda$ is a limit ordinal}.
\end{align*}

We define the \emph{sup-closure} of $T$ as $\overline{T}^{\txtSup}  := \bigcup_{\alpha\geq 1} \overline{T}^{(\alpha)}$.
We say that $T$ is \emph{sup-closed} if $T=\overline{T}^{\txtSup}$.
\end{dfn}

\begin{rmk}
\label{prp:SupClosure}
Let $S$ be a \CuSgp, and let $T\subseteq S$ be a subset.
Then $(\overline{T}^{(\alpha)})_\alpha$ is an increasing family of subsets of $S$, which therefore stabilizes eventually.
Thus, there exists an ordinal $\alpha_0$ such that $\overline{T}^{(\alpha)}=\overline{T}^{(\alpha_0)}$ for all $\alpha\geq\alpha_0$.
Then $\overline{T}^{\txtSup}=\overline{T}^{(\alpha_0)}$, and we get $\overline{\overline{T}^{\txtSup}}^{\txtSeq}=\overline{T}^{\txtSup}$.
It follows that $\overline{T}^{\txtSup}$ is sup-closed, as expected.

We also note that $T$ is sup-closed if and only if $T=\overline{T}^{\txtSeq}$.
\end{rmk}

\begin{dfn}
\label{dfn:Derived}
Let $S$ be a \CuSgp, and let $T\subseteq S$ be a subset.
We set
\[
T'
:= \left\{ \sup_n x_n \in S : (x_n)_n \text{ is a $\ll$-increasing sequence in } T \right\}.
\]
\end{dfn}

\begin{rmk}
\label{rmk:Derived}
Given a topological space $X$ and a subset $Y\subseteq X$, the \emph{derived set} of~$Y$, denoted by $Y'$, is defined as the set of limit points of $Y$.

Let $S$ be a \CuSgp{} and let $T$ be a subset of $S$. 
We think of suprema of $\ll$-increasing sequences of elements in $T$ as the limit points of $T$. 
Therefore, one may view $T'$ as the \emph{derived set} of $T$.
Further, the derived subsets of a \CuSgp{} satisfy the following properties, which are analogs of well known facts satisfied by the  derived subsets of a topological space:
\begin{enumerate}[(i)]
\item 
If $x\in T'$ and if $x$ is not compact (that is, $x\not\ll x$), then $x$ also belongs to $(T-\{ x\})'$.
\item 
We have $(T\cup H)' = T'\cup H'$.
\item 
If $T\subseteq H$, then $T'\subseteq H'$.
\end{enumerate}

To push the previous analogy even further, recall that a subset $Y$ of a topological space is said to be \emph{perfect} if $Y=Y'$.
\autoref{prp:CharSubCu3} below shows that we may think of sub-\CuSgp{s} as the perfect submonoids of a \CuSgp.
\end{rmk}

\begin{lma}
\label{prp:DerivedSupClosed}
Let $S$ be a \CuSgp, and let $T\subseteq S$ be a submonoid.
Then $T'$ is a sup-closed submonoid of $S$. 
\end{lma}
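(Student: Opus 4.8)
The plan is to verify the two defining properties separately: first that $T'$ is a submonoid, and then that it is sup-closed. By \autoref{prp:SupClosure}, being sup-closed is equivalent to $T'=\overline{T'}^{\txtSeq}$, and since the inclusion $T'\subseteq\overline{T'}^{\txtSeq}$ is automatic (constant sequences are increasing), the real content is to show that the supremum of any increasing sequence in $T'$ again belongs to $T'$.

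For the submonoid property, I would first note that $0\in T'$: since $0\ll 0$ (because $0\leq z$ for every $z$), the constant sequence witnesses $0\in T'$. For closure under addition, if $x=\sup_n x_n$ and $y=\sup_n y_n$ with $(x_n)_n,(y_n)_n$ being $\ll$-increasing sequences in $T$, then $(x_n+y_n)_n$ lies in $T$, is $\ll$-increasing by \axiomO{3}, and has supremum $x+y$ by \axiomO{4}; hence $x+y\in T'$.

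The main part is sup-closedness. Let $(w_m)_m$ be an increasing sequence in $T'$ with supremum $w$, and write $w_m=\sup_k x^{(m)}_k$ for $\ll$-increasing sequences $(x^{(m)}_k)_k$ in $T$. I would first record a transfer relation: for every $m$ and $k$ there exists $k''$ with $x^{(m)}_k\ll x^{(m+1)}_{k''}$. This holds because $x^{(m)}_k\ll x^{(m)}_{k+1}\leq w_m\leq w_{m+1}=\sup_{k'}x^{(m+1)}_{k'}$, so $x^{(m)}_k\leq x^{(m+1)}_{k'}$ for some $k'$, whence $x^{(m)}_k\ll x^{(m+1)}_{k'+1}$. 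Then I construct a diagonal sequence $d_m:=x^{(m)}_{k(m)}$ by induction on $m$, at each step choosing $k(m+1)$ large enough to simultaneously (a) satisfy $d_m\ll d_{m+1}$ via the transfer relation, and (b) dominate the finitely many elements $x^{(i)}_m$ for $i\leq m$ (each of which is way-below $w_{m+1}=\sup_{k'}x^{(m+1)}_{k'}$, hence below some $x^{(m+1)}_{k'}$). The resulting sequence $(d_m)_m$ is $\ll$-increasing in $T$, so $\sup_m d_m\in T'$ by \autoref{dfn:Derived}. Finally $d_m\leq w_m\leq w$ gives $\sup_m d_m\leq w$, while (b) yields $x^{(i)}_m\leq d_{m+1}$ for all $m\geq i$, so $\sup_m d_m\geq\sup_k x^{(i)}_k=w_i$ for every $i$, and therefore $\sup_m d_m\geq\sup_i w_i=w$. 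Hence $w=\sup_m d_m\in T'$, as required.

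The step I expect to be the main obstacle is reconciling the $\ll$-increasing requirement with reaching the \emph{exact} supremum $w$. Since a \CuSgp{} need not admit joins, one cannot simply take upper bounds of prescribed elements; instead the diagonalization must be arranged so that each fixed row $(x^{(i)}_k)_k$ is eventually dominated by the diagonal—guaranteeing $\sup_m d_m\geq w$—while the transfer relation keeps the diagonal $\ll$-increasing. Choosing $k(m+1)$ to dominate only the finitely many entries $x^{(i)}_m$ with $i\leq m$ is precisely what makes these two requirements compatible, and this balancing is the crux of the argument.
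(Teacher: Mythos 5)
Your proof is correct and takes essentially the same route as the paper's: the submonoid part via $0\ll 0$ and additivity of $\ll$ (using \axiomO{3} and \axiomO{4}), and sup-closedness via a diagonal argument producing a $\ll$-increasing sequence in $T$ that eventually dominates every row and hence has supremum $w$. The only difference is bookkeeping—the paper's diagonal entry $x_{n,k(n)}$ is chosen to lie $\ll$-above the shifted tail entries $x_{i,k(i)+n}$ of earlier rows, while yours dominates the $m$-th entries $x^{(i)}_m$—but both devices serve the identical purpose.
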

\begin{proof}
Using that the way-below relation is additive and that $0$ is way-below itself, it follows that $T'$ is a submonoid.
It remains to verify that $T'$ is closed under suprema of increasing sequences.

Let $(x_n)_n$ be an increasing sequence in $T'$.
For each~$n$, by definition of $T'$, there exists a $\ll$-increasing sequence $(x_{n,k})_k$ with supremum $x_n$.
Set $k(0):=0$.
Then $x_{0,k(0)+1}\ll x_0\leq x_1$.
Choose $k(1)\in\NN$ such that $x_{0,k(0)+1}\ll x_{1,k(1)}$.
Using that $x_{0,k(0)+2}$ and $x_{1,k(1)+2}$ are way-below $x_2$, we can choose $k(2)\in\NN$ such that $x_{0,k(0)+2},x_{1,k(1)+2}\ll x_{2,k(2)}$.
We inductively choose indices $k(n)\in\NN$ such that
\[
x_{0,k(0)+n},x_{1,k(1)+n},\ldots,x_{n-1,k(n-1)+n} \ll x_{n,k(n)}.
\]
For each $n\in\NN$ set $y_n:=x_{n,k(n)}$.
Then $(y_n)_n$ is a $\ll$-increasing sequence with $\sup_n y_n=\sup_n x_n$, and consequently $\sup_n x_n$ belongs to $T'$.
\end{proof}

\begin{prp}
\label{prp:CharSubCu3}
Let $S$ be a \CuSgp.
Then a submonoid $T\subseteq S$ is a sub-\CuSgp{} if and only if $T=T'$.
\end{prp}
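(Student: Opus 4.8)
The plan is to prove both implications by carefully unpacking the definition of a sub-\CuSgp{} via \autoref{prp:CharSubCu1}, which says that a submonoid $T\subseteq S$ is a sub-\CuSgp{} if and only if (a) $T$ is closed under suprema of increasing sequences, and (b) for every $x'\in S$ and $x\in T$ with $x'\ll x$ there exists $y\in T$ with $x'\ll y\ll x$. So the whole argument reduces to showing that the condition $T=T'$ is equivalent to the conjunction of (a) and (b). By \autoref{prp:DerivedSupClosed} we already know $T'$ is always a sup-closed submonoid, so one inclusion $T'\subseteq T$ (for a sub-\CuSgp) and the reverse $T\subseteq T'$ should each be tractable.

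For the forward direction, suppose $T$ is a sub-\CuSgp. First I would show $T\subseteq T'$: given any $x\in T$, axiom \axiomO{2} applied inside the \CuSgp{} $T$ produces a $\ll$-increasing sequence in $T$ with supremum $x$ (where the $\ll$ is computed in $T$, but since the inclusion is a \CuMor{} preserving the way-below relation, this is also $\ll$-increasing in $S$), so $x\in T'$. Conversely, for $T'\subseteq T$, take $\sup_n x_n\in T'$ with $(x_n)_n$ a $\ll$-increasing sequence in $T$; this is in particular an increasing sequence in $T$, and since $T$ is closed under suprema of increasing sequences (property (a) from \autoref{prp:CharSubCu1}), its supremum lies in $T$. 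Hence $T=T'$.

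For the converse, assume $T=T'$. Property (a) is immediate since $T'$ is sup-closed by \autoref{prp:DerivedSupClosed}, and any increasing sequence can be thinned using \axiomO{2}-type interpolation to a $\ll$-increasing one with the same supremum (indeed this is exactly the content of the diagonalization in the proof of \autoref{prp:DerivedSupClosed}, applied to the sup-closure statement $\overline{T'}^{\txtSeq}=T'$). For property (b), take $x'\in S$ and $x\in T=T'$ with $x'\ll x$; write $x=\sup_n x_n$ for a $\ll$-increasing sequence $(x_n)_n$ in $T$. Since $x'\ll x=\sup_n x_n$, the definition of $\ll$ gives some $n$ with $x'\leq x_n$, and then $x'\leq x_n\ll x_{n+1}\leq x$ with $x_{n+1}\in T$, so $y:=x_{n+1}$ works (after possibly passing one step further to ensure the strict $x'\ll y$; one has $x'\leq x_n\ll x_{n+1}$ giving $x'\ll x_{n+1}$). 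Thus \autoref{prp:CharSubCu1} applies and $T$ is a sub-\CuSgp.

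The main obstacle, and the step deserving the most care, is verifying property (b) and in particular ensuring the \emph{strict} inequality $x'\ll y$ rather than merely $x'\leq y$: one must exploit that $x'\leq x_n$ together with $x_n\ll x_{n+1}$ upgrades to $x'\ll x_{n+1}$, using transitivity of the way-below relation against $\leq$ (i.e.\ that $x'\leq x_n\ll x_{n+1}$ implies $x'\ll x_{n+1}$). The diagonalization needed to realize an arbitrary increasing sequence in $T'$ as a $\ll$-increasing one is exactly the argument already carried out in \autoref{prp:DerivedSupClosed}, so I would simply invoke that closure property rather than repeat the construction.
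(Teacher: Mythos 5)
Your proposal is correct and takes essentially the same route as the paper: the paper likewise declares the forward implication clear and proves the converse by observing that $T=T'$ is sup-closed (via \autoref{prp:DerivedSupClosed}) and then invoking \autoref{prp:CharSubCu1}; your write-up simply fills in the details the paper leaves implicit. One small point to make explicit in your verification of property (b): \autoref{prp:CharSubCu1} requires $x'\ll y\ll x$, and with $y:=x_{n+1}$ you only record $y\leq x$, but $y\ll x$ follows at once from $x_{n+1}\ll x_{n+2}\leq x$ by the same transitivity of $\ll$ against $\leq$ that you already use.
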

\begin{proof}
The forward implication is clear.
To show the converse, assume that $T\subseteq S$ is a submonoid satisfying $T=T'$.
By \autoref{prp:DerivedSupClosed}, $T$ is sup-closed.
Hence, we can apply \autoref{prp:CharSubCu1} to deduce that $T$ is a sub-\CuSgp.
\end{proof}

The next result recovers \cite[Lemma~5.3.17]{AntPerThi18TensorProdCu}.

\begin{cor}
\label{prp:TinDerivedT}
Let $S$ be a \CuSgp, and let $T\subseteq S$ be a submonoid such that every element in $T$ is the supremum of a $\ll$-increasing sequence in~$T$.
Then $T'=\overline{T}^{\txtSeq}=\overline{T}^{\txtSup}$, which is a sub-\CuSgp{} of $S$.
\end{cor}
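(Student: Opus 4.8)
The plan is to treat the hypothesis as the single inclusion $T\subseteq T'$ and then bootstrap everything from the sup-closedness of $T'$ established in \autoref{prp:DerivedSupClosed}. Indeed, the assumption that every element of $T$ is the supremum of a $\ll$-increasing sequence in $T$ is exactly the assertion $T\subseteq T'$. On the other hand, a $\ll$-increasing sequence is in particular increasing, so $T'\subseteq\overline{T}^{\txtSeq}$, and this inclusion holds for any subset of $S$.

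To obtain the reverse inclusion $\overline{T}^{\txtSeq}\subseteq T'$, I would first observe that the operation $T\mapsto\overline{T}^{\txtSeq}$ is monotone, since an increasing sequence in $T$ is also an increasing sequence in any larger set. Applying this to $T\subseteq T'$ gives $\overline{T}^{\txtSeq}\subseteq\overline{T'}^{\txtSeq}$. By \autoref{prp:DerivedSupClosed} the submonoid $T'$ is sup-closed, which by \autoref{prp:SupClosure} means $\overline{T'}^{\txtSeq}=T'$. Chaining these yields $\overline{T}^{\txtSeq}\subseteq T'$, and combined with the previous observation we conclude $T'=\overline{T}^{\txtSeq}$.

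For the equality $\overline{T}^{\txtSeq}=\overline{T}^{\txtSup}$, I would use that $\overline{T}^{(1)}=\overline{T}^{\txtSeq}=T'$ is already sup-closed by \autoref{prp:DerivedSupClosed}. As noted in \autoref{prp:SupClosure}, the increasing family $(\overline{T}^{(\alpha)})_{\alpha}$ stabilizes once it becomes sup-closed, so here it is constant for $\alpha\geq 1$; therefore $\overline{T}^{\txtSup}=\overline{T}^{(1)}=\overline{T}^{\txtSeq}$. Finally, to see that this common set $T'$ is a sub-\CuSgp, I would apply \autoref{prp:CharSubCu3}: since $T'$ is a submonoid by \autoref{prp:DerivedSupClosed}, it suffices to verify $(T')'=T'$. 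The inclusion $(T')'\subseteq\overline{T'}^{\txtSeq}=T'$ again follows from sup-closedness, while conversely each $x\in T'$ is the supremum of a $\ll$-increasing sequence lying in $T\subseteq T'$, hence $x\in(T')'$.

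The argument is mostly bookkeeping on top of the earlier results, so I do not expect a serious obstacle. The one step that genuinely uses the hypothesis is the reverse inclusion $\overline{T}^{\txtSeq}\subseteq T'$: here $T\subseteq T'$ must be fed into the monotonicity of $T\mapsto\overline{T}^{\txtSeq}$ together with the sup-closedness of $T'$, rather than attempting to construct the relevant $\ll$-increasing sequences by hand. That construction is precisely the content of \autoref{prp:DerivedSupClosed}, and invoking it directly is what keeps the proof short.
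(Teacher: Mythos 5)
Your proof is correct and follows essentially the same route as the paper: both reduce the hypothesis to $T\subseteq T'$, use the sup-closedness of $T'$ from \autoref{prp:DerivedSupClosed} together with monotonicity of the closure operations to identify $T'$ with $\overline{T}^{\txtSeq}$ and $\overline{T}^{\txtSup}$, and then verify $T'=T''$ to invoke \autoref{prp:CharSubCu3}. The only cosmetic difference is that the paper applies monotonicity directly to the transfinite closure ($\overline{T}^{\txtSup}\subseteq\overline{T'}^{\txtSup}=T'$), whereas you handle the sequential closure first and then note the transfinite chain stabilizes at stage one; these are the same argument.
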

\begin{proof}
The inclusions $T'\subseteq\overline{T}^{\txtSeq}\subseteq\overline{T}^{\txtSup}$ hold in general.
By assumption, we have $T\subseteq T'$.
Using \autoref{prp:DerivedSupClosed} at the second step, we get
\[
\overline{T}^{\txtSup} \subseteq \overline{T'}^{\txtSup} = T'.
\]

Since $T'$ is sup-closed, we have $T''\subseteq T'$.
On the other hand, using again that $T\subseteq T'$, we have $T'\subseteq T''$.
Thus, $T'=T''$, which by \autoref{prp:CharSubCu3} implies that $T'\subseteq S$ is a sub-\CuSgp.
\end{proof}

Let $\alpha$ be an ordinal number. 
Continuing with the analogy from \autoref{rmk:Derived}, we now define what may be seen as the $\Cu$-counterpart of the $\alpha$-th Cantor-Bendixson derivative.

\begin{dfn}
\label{dfn:Delta}
Let $S$ be a \CuSgp, and let $T\subseteq S$ be a submonoid.
We define $T^{(\alpha)}$ for every ordinal $\alpha$ by setting $T^{(0)} := T$, $T^{(1)} := T'$, and by using (transfinite) induction:
\begin{align*}
T^{(\alpha+1)} &:= \left(T^{(\alpha)}\right)', \\
T^{(\lambda)} &:= \bigcap_{\alpha<\lambda}T^{(\alpha)},\quad\text{ if $\lambda$ is a limit ordinal}.
\end{align*}

We set
\[
\delta(T) := \bigcap_{\alpha\geq 1} T^{(\alpha)}.
\]
\end{dfn}

\begin{thm}
\label{prp:DeltaIsSupCu}
Let $S$ be a \CuSgp, and let $T\subseteq S$ be a submonoid.
Then $\delta(T)\subseteq S$ is a sub-\CuSgp.

We always have $\delta(T)\subseteq\overline{T}^{\txtSup}$.
Thus, if $T$ is sup-closed, then $\delta(T)\subseteq T$.
\end{thm}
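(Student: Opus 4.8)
The plan is to deduce that $\delta(T)$ is a sub-\CuSgp{} from \autoref{prp:CharSubCu3}, for which it suffices to show that $\delta(T)$ is a submonoid satisfying $\delta(T)=\delta(T)'$. I would obtain this from a stabilization argument for the transfinite family $(T^{(\alpha)})_{\alpha\geq 1}$, in close analogy with the stabilization of the sup-closure recorded in \autoref{prp:SupClosure}.

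First I would verify, by transfinite induction, that every $T^{(\alpha)}$ with $\alpha\geq 1$ is a sup-closed submonoid of $S$. The base case $T^{(1)}=T'$ is exactly \autoref{prp:DerivedSupClosed}, and the successor step $T^{(\alpha+1)}=(T^{(\alpha)})'$ is again \autoref{prp:DerivedSupClosed} applied to the submonoid $T^{(\alpha)}$. At a limit ordinal, $T^{(\lambda)}$ is an intersection of submonoids, hence a submonoid, and it is sup-closed because the supremum of an increasing sequence lying in each sup-closed $T^{(\alpha)}$ again lies in each $T^{(\alpha)}$. In particular $\delta(T)=\bigcap_{\alpha\geq 1}T^{(\alpha)}$ is a submonoid. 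Using sup-closedness, the derived set satisfies $T^{(\alpha+1)}=(T^{(\alpha)})'\subseteq T^{(\alpha)}$ for every $\alpha\geq 1$, and together with the monotonicity of the derived-set operation from \autoref{rmk:Derived} and the limit clause this shows that $(T^{(\alpha)})_{\alpha\geq 1}$ is a \emph{decreasing} family of subsets of $S$.

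A decreasing transfinite family of subsets of the fixed set $S$ must stabilize, exactly as in \autoref{prp:SupClosure}: there is an ordinal $\alpha_0\geq 1$ with $T^{(\alpha)}=T^{(\alpha_0)}$ for all $\alpha\geq\alpha_0$, and then $\delta(T)=T^{(\alpha_0)}$. Since stabilization gives $T^{(\alpha_0)}=T^{(\alpha_0+1)}=(T^{(\alpha_0)})'$, we obtain $\delta(T)=(\delta(T))'$, so \autoref{prp:CharSubCu3} applies and $\delta(T)$ is a sub-\CuSgp. For the remaining assertions, note that $\delta(T)\subseteq T^{(1)}=T'\subseteq\overline{T}^{\txtSeq}\subseteq\overline{T}^{\txtSup}$, since every supremum of a $\ll$-increasing sequence in $T$ is in particular the supremum of an increasing sequence in $T$; and if $T$ is sup-closed then $\overline{T}^{\txtSup}=T$ by \autoref{prp:SupClosure}, whence $\delta(T)\subseteq T$.

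The step I expect to be the main obstacle is the limit-ordinal case of the sup-closedness/monotonicity induction: one must check that intersecting the sup-closed stages does not destroy sup-closedness, and that the limit stages remain comparable with their successors, so that the family is genuinely decreasing and the stabilization argument is available. Once this is secured, the rest of the argument is formal.
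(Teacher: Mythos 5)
Your proposal is correct and follows the paper's own proof essentially step for step: a transfinite induction showing each $T^{(\alpha)}$, $\alpha\geq 1$, is a sup-closed submonoid (via \autoref{prp:DerivedSupClosed} at the base and successor stages, and directly from the definition at limit stages), stabilization of the resulting decreasing family to get $\delta(T)=\delta(T)'$, an appeal to \autoref{prp:CharSubCu3}, and the chain $\delta(T)\subseteq T'\subseteq\overline{T}^{\txtSeq}\subseteq\overline{T}^{\txtSup}$ for the final assertions. The limit-ordinal step you flagged as the main obstacle is handled exactly as you describe (an intersection of sup-closed submonoids is a sup-closed submonoid), which is precisely what the paper means when it says the limit case follows directly from the definition.
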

\begin{proof}
Using transfinite induction, we prove that $T^{(\alpha)}$ is a sup-closed submonoid for each ordinal $\alpha\geq 1$.
For $\alpha=1$ and the successor case, this follows from \autoref{prp:DerivedSupClosed}.
The limit case follows directly from the definition.

Thus, $\delta(T)$ is a submonoid.
Further, we deduce that the $T^{(\alpha)}$, for $\alpha\geq 1$, form a decreasing family of submonoids, which therefore stabilizes.
Hence, there exists $\alpha\geq 1$ such that $\delta(T)=T^{(\alpha)}$.
It follows that
\[
\delta(T) = T^{(\alpha)} = T^{(\alpha+1)} = \delta(T)',
\]
which by \autoref{prp:CharSubCu3} implies that $\delta(T)$ is a sub-\CuSgp.

It is clear from the definition that $T'\subseteq\overline{T}^{\txtSeq}$, which shows that $\delta(T)\subseteq\overline{T}^{\txtSup}$.
\end{proof}

Let $S$ be a \CuSgp.
Let $\mathcal{P}$ be the collection of all subsets of $S$;
let $\mathcal{C}$ be the collection of all sup-closed submonoids of $S$;
and let $\mathcal{S}$ be the collection of sub-\CuSgp{s} of $S$.
We equip each of these collections with the partial order given by inclusion of subsets.

Let $\alpha\colon\mathcal{P}\to\mathcal{C}$ be the map that sends a subset of $S$ to the sup-closure of the submonoid it generates.
Then $\alpha$ is order-preserving.
Further, considering $\alpha$ as a map $\mathcal{P}\to\mathcal{P}$, we see that $\alpha$ is idempotent and satisfies $X\subseteq\alpha(X)$ for every $X\in\mathcal{P}$.

Therefore, $\alpha\colon\mathcal{P}\to\mathcal{P}$ is a closure operator in the sense of
\cite[Definition~0-3.8(ii)]{GieHof+03Domains}.
Using that $\mathcal{P}$ is a complete lattice, it follows that $\mathcal{C}$ is a complete lattice as well, that $\alpha$ preserves arbitrary suprema, and that the inclusion map $\iota\colon\mathcal{C}\to\mathcal{P}$ preserves arbitrary infima.
In particular, given a subset $C\subseteq\mathcal{C}$, the supremum of~$C$ in $\mathcal{C}$ is $\sup_{\mathcal{C}}C=\alpha(\bigcup C)$ and the infimum is $\inf_{\mathcal{C}}C=\bigcap C$.
(The intersection of a family of sup-closed submonoids of $S$ is again a sup-closed submonoid.)

Let $\delta\colon\mathcal{C}\to\mathcal{S}$ be the map that sends $T\in\mathcal{C}$ to $\delta(T)$ as defined in \autoref{dfn:Delta}.
It follows from \autoref{prp:DeltaIsSupCu} that $\delta$ is well-defined and order-preserving.
Using also \autoref{prp:CharSubCu3}, we see that $\delta$ as map $\mathcal{C}\to\mathcal{C}$ is idempotent and satisfies $\delta(T)\subseteq T$ for every $T\in\mathcal{C}$.
Thus, $\delta\colon\mathcal{C}\to\mathcal{C}$ is a kernel operator in the sense of
\cite[Definition~0-3.8(iii)]{GieHof+03Domains}.
It follows that $\mathcal{S}$ is a complete lattice, that $\delta$ preserves arbitrary infima, and that the inclusion map $\iota\colon\mathcal{S}\to\mathcal{C}$ preserves arbitrary suprema.

The considered maps are shown in the following diagram:
\[
\xymatrix{
\mathcal{S} \ar@{^{(}->}[r]_{\iota}
& \mathcal{C} \ar@{^{(}->}[r]_{\iota} \ar@/_1pc/[l]_{\delta}
& \mathcal{P}. \ar@/_1pc/[l]_{\alpha}
}
\]

\begin{thm}
\label{prp:LatticeSubCu}
Let $S$ be a \CuSgp.
Then the collection of sub-\CuSgp{s} of $S$ is a complete lattice when ordered by inclusion.

Given a collection $(T_j)_{j\in J}$ of sub-\CuSgp{s} of $S$, their supremum is the sup-closure of the submonoid of $S$ generated by $\bigcup_j T_j$, while their infimum is $\delta(\bigcap_j T_j)$.
\end{thm}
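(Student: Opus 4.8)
The plan is to deduce the statement directly from the theory of closure and kernel operators assembled in the paragraph preceding it, so that almost no new work is required. The relevant data are the complete lattice $\mathcal{P}$ of all subsets of $S$, the lattice $\mathcal{C}$ of sup-closed submonoids together with the closure operator $\alpha\colon\mathcal{P}\to\mathcal{P}$, and the collection $\mathcal{S}$ of sub-\CuSgp{s} together with the kernel operator $\delta\colon\mathcal{C}\to\mathcal{C}$. Since $\delta$ is a kernel operator on the complete lattice $\mathcal{C}$, its fixed-point set $\mathcal{S}=\delta(\mathcal{C})$ is itself a complete lattice; this gives the first assertion. (This is exactly the conclusion recorded just before the statement, and for the completeness of $\mathcal{C}$ one uses dually that $\alpha$ is a closure operator on $\mathcal{P}$.)

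For the explicit formulas I would first note that $\mathcal{S}\subseteq\mathcal{C}$: by \autoref{prp:CharSubCu1} a sub-\CuSgp{} is closed under suprema of increasing sequences, hence is a sup-closed submonoid. Thus any family $(T_j)_{j\in J}$ of sub-\CuSgp{s} lies in both lattices, and I can compute in two stages. For the supremum, I would use that the image of the kernel operator $\delta$ is closed under suprema in $\mathcal{C}$, so that the supremum of $(T_j)$ in $\mathcal{S}$ coincides with its supremum in $\mathcal{C}$; by the closure-operator description of $\mathcal{C}$ this supremum is $\alpha(\bigcup_j T_j)$, namely the sup-closure of the submonoid generated by $\bigcup_j T_j$, as claimed. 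For the infimum, I would use that infima in $\mathcal{S}=\delta(\mathcal{C})$ are obtained by applying $\delta$ to the infimum in $\mathcal{C}$, and that the infimum in $\mathcal{C}$ is just the intersection $\bigcap_j T_j$ (intersections of sup-closed submonoids are again sup-closed); this yields the infimum $\delta(\bigcap_j T_j)$.

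I do not anticipate a genuine obstacle, because the substantive facts---that $\delta$ is a kernel operator and $\alpha$ a closure operator, which rest on \autoref{prp:DeltaIsSupCu} and \autoref{prp:CharSubCu3}, and that the image of such an operator on a complete lattice is again a complete lattice---are already in place. The only step demanding a little care is the bookkeeping that links the two operators: one must check that each $T_j$ is simultaneously an element of $\mathcal{C}$ and of $\mathcal{S}$, so that the kernel-operator description of suprema and infima in $\mathcal{S}$ may legitimately be combined with the closure-operator description of suprema and infima in $\mathcal{C}$. Once this is verified, the two formulas follow immediately.
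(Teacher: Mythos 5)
Your proposal is correct and takes essentially the same approach as the paper: the paper's proof is precisely the preceding paragraph's setup of the closure operator $\alpha$ on $\mathcal{P}$ and the kernel operator $\delta$ on $\mathcal{C}$, from which completeness of $\mathcal{S}$ and both formulas follow by the standard facts from \cite{GieHof+03Domains}. Your identification of suprema in $\mathcal{S}$ with suprema in $\mathcal{C}$ (the image of a kernel operator being closed under suprema) and of infima as $\delta$ applied to the intersection matches the paper's argument exactly.
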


\section{Reduction to countably based \texorpdfstring{$\mathrm{Cu}$}{Cu}-semigroups}
\label{sec:reduction}

In this section, we show that the dimension of a \CuSgp{} is determined by its countably based sub-\CuSgp{s};
see \autoref{prp:CharDimCtbl}. 
We then generalize some results from \cite{ThiVil21arX:DimCu} by dropping the countably based assumption; see Propositions~\ref{prp:Dim0ImplInterpol} and~\ref{prp:DimSoftPart}.

\begin{lma}
\label{prp:genSubCu}
Let $S$ be a \CuSgp, and let $T_0\subseteq S$ be a countable subset.
Then there exists a countably based sub-\CuSgp{} $T\subseteq S$ such that $T_0\subseteq T$.
\end{lma}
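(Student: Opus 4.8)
The plan is to build the desired countably based sub-\CuSgp{} $T$ by iterating a closure procedure that, starting from a countable set, repeatedly adds enough elements to witness all the way-below relations needed for \autoref{prp:CharSubCu1}, while keeping everything countable. Concretely, I would construct a countable set $B$ closed under addition and under the following \textbf{filling operation}: whenever $x',x\in B$ satisfy $x'\ll x$, we adjoin an element $y$ with $x'\ll y\ll x$. The supremum-closure of the monoid generated by $B$ will then be the sought sub-\CuSgp, with $B$ serving as a countable basis.

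First I would fix a countable subset $T_0\subseteq S$ and enlarge it to a countable submonoid by closing under addition (the monoid generated by a countable set is countable). Next I would set up a countable iteration: given a countable set $B_i$, for each pair $(x',x)$ in the countable set $B_i\times B_i$ with $x'\ll x$, use \axiomO{2} together with the interpolation property of the way-below relation to choose a single witness $y_{x',x}$ satisfying $x'\ll y_{x',x}\ll x$; let $B_{i+1}$ be the monoid generated by $B_i$ together with all these countably many witnesses. Each $B_{i+1}$ is countable, so $B:=\bigcup_i B_i$ is a countable submonoid with the key property that for all $x',x\in B$ with $x'\ll x$ there is $y\in B$ with $x'\ll y\ll x$; in other words, $B$ is \emph{interpolative} in itself.

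I would then define $T:=\overline{B}^{\txtSeq}$, the set of suprema of increasing sequences from $B$ (using that $S$ satisfies \axiomO{1}, these suprema exist in $S$). Since $B$ is interpolative, every $x',x\in B$ with $x'\ll x$ admits an intermediate $y\in B\subseteq T$, so by \autoref{prp:CharSubCu1} it should follow that $T$ is a sub-\CuSgp. Alternatively, and more cleanly, I would observe that every element of $B$ is the supremum of a $\ll$-increasing sequence in $B$ (obtained by repeated interpolation inside $B$), so $B$ falls under the hypothesis of \autoref{prp:TinDerivedT}, giving directly that $T=B'=\overline{B}^{\txtSeq}=\overline{B}^{\txtSup}$ is a sub-\CuSgp{} of $S$ containing $T_0$. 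Finally, I would check that $B$ is a countable basis for $T$: for $x'\ll x$ in $T$, write $x=\sup_n b_n$ with $b_n\in B$ increasing, so $x'\ll b_n$ for some $n$, and then interpolation inside $B$ yields $y\in B$ with $x'\ll y\ll b_n\leq x$, whence $x'\ll y\ll x$ with $y\in B$; thus $T$ is countably based.

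The main obstacle is organizing the iteration so that the filling property holds for \emph{all} way-below pairs that eventually appear in $B$, not just those present at an early stage; this is the standard issue that the closure operation creates new pairs to be filled. The countable iteration $B=\bigcup_i B_i$ resolves this, since any pair $(x',x)$ with $x',x\in B$ already lies in some $B_i\times B_i$ and hence gets a witness in $B_{i+1}\subseteq B$. A secondary point to verify carefully is that $\overline{B}^{\txtSeq}$ is genuinely closed under suprema of increasing sequences (so that it satisfies \axiomO{1} as a sub-\CuSgp); this is handled either by invoking \autoref{prp:TinDerivedT} or by the diagonal-sequence argument already used in the proof of \autoref{prp:DerivedSupClosed}, which shows that a supremum of increasing sequences of suprema is again such a supremum.
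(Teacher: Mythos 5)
Your construction has a genuine gap at the step where you claim that every element of $B$ is the supremum of a $\ll$-increasing sequence in $B$ ``obtained by repeated interpolation inside $B$''. Repeated interpolation applied to a pair $x'\ll x$ does produce a $\ll$-increasing sequence $x'\ll y_1\ll y_2\ll\cdots\ll x$ inside $B$, but nothing forces the supremum of that sequence to equal $x$: the interpolants may stall strictly below $x$. Since your filling operation adds only one witness per way-below pair, the set $\{y\in B:y\ll x\}$ need not contain any sequence with supremum $x$, and then neither \autoref{prp:TinDerivedT} nor \autoref{prp:CharSubCu1} applies (note also that \autoref{prp:CharSubCu1} quantifies over all $x'\in S$, not just $x'\in B$, so interpolation inside $B$ alone could not verify it without the supremum statement). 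Concretely, take $S=[0,\infty]$ and $T_0=\{0,1\}$, so that the monoid generated is $B_0=\NN$. For the pair $(0,1)$ the witness $y=0$ is legitimate, since $0\ll 0\ll 1$; for pairs $(0,x)$ with $x>1$ take the witness $1$; and for pairs $(m,n)$ with $1\leq m<n$ take midpoints. Iterating these choices keeps every $B_i$, and hence $B$, inside $\{0\}\cup\mathbb{Q}_{\geq 1}$, so $B$ satisfies your filling property, yet $T=\overline{B}^{\txtSeq}=\{0\}\cup[1,\infty]$ is not a sub-\CuSgp{} of $S$: there is no $y\in T$ with $1/2\ll y\ll 1$, so the condition of \autoref{prp:CharSubCu1} fails (equivalently, $1$ is compact in $T$ but not in $S$, so the inclusion $T\to S$ does not preserve $\ll$).

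The repair is exactly what the paper does. Instead of adjoining one interpolant per way-below pair, for each element $x$ of the current countable submonoid use \axiomO{2} to choose a $\ll$-increasing sequence in $S$ whose supremum is exactly $x$, and adjoin all members of that sequence before passing to the next stage of the iteration. Then every element of $T_\infty=\bigcup_k T_k$ genuinely is the supremum of a $\ll$-increasing sequence in $T_\infty$, so \autoref{prp:TinDerivedT} applies to show that $T=\overline{T_\infty}^{\txtSeq}$ is a sub-\CuSgp{}, and your final verification that the countable set is a basis for $T$ then goes through. The rest of your proposal---closing under addition, the countable iteration to handle newly created pairs, and the basis argument---is sound.
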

\begin{proof}
We may assume that $T_0$ is a submonoid.
For each $x\in T_0$ choose a $\ll$-increasing sequence in $S$ with supremum $x$, and let $T_1$ be the submonoid of $S$ generated by $T_0$ and the elements in each of the chosen sequences.
Repeating this process, we successively obtain an increasing sequence $(T_k)_k$ of countable submonoids of $S$ such that for each $k\in\NN$ and $x\in T_k$ there exists a $\ll$-increasing sequence in $T_{k+1}$ with supremum $x$.
Then $T_\infty:=\bigcup_k T_k$ is a countable submonoid of $S$ such that every element in $T_\infty$ is the supremum of a $\ll$-increasing sequence in~$T_\infty$.
Set $T:=\overline{T_\infty}^{\txtSeq}$.
By \autoref{prp:TinDerivedT}, $T$ is a sub-\CuSgp{} of $S$.
It is straightforward to verify that $T_\infty$ is a countable basis for $T$.
\end{proof}

\begin{pgr}
Given a \CuSgp{} $S$, we let $\SubCtbl(S)$ denote the collection of countably based sub-\CuSgp{s} of $S$.
If $\mathcal{T}\subseteq\SubCtbl(S)$ is a countable, directed family, then $\bigcup \mathcal{T}$ is a submonoid of $S$ such that every element is the supremum of a $\ll$-increasing sequence in $\bigcup \mathcal{T}$, whence it follows from \autoref{prp:TinDerivedT} that the sup-closure $\overline{\bigcup \mathcal{T}}^{\txtSup}$ is a (countably based) sub-\CuSgp.
Note that $\overline{\bigcup \mathcal{T}}^{\txtSup}$ is the supremum of $\mathcal{T}$ in the complete lattice of sub-\CuSgp{s};
see \autoref{prp:LatticeSubCu}.

A collection $\mathcal{R}\subseteq\SubCtbl(S)$ is said to be \emph{$\sigma$-complete} if $\overline{\bigcup \mathcal{T}}^{\txtSup}$ belongs to $\mathcal{R}$ for every countable, directed subset $\mathcal{T}\subseteq\mathcal{R}$.
Further, $\mathcal{R}\subseteq\SubCtbl(S)$ is said to be \emph{cofinal} if for every $T_0\in\SubCtbl(S)$ there is $T\in\mathcal{R}$ satisfying $T_0\subseteq T$.

We say that a property $\mathcal{P}$ of \CuSgp{s} satisfies the \emph{L{\"o}wenheim-Skolem condition} if for every \CuSgp{} $S$ satisfying $\mathcal{P}$, there exists a $\sigma$-complete, cofinal subcollection $\mathcal{R}\subseteq\SubCtbl(S)$ such that every $R\in\mathcal{R}$ satisfies $\mathcal{P}$.
In Propositions~\ref{prp:LS-O5O6O7}, \ref{prp:LS-SimpleCanc} and~\ref{prp:LS-Dim} below, we show that \axiomO{5}, \axiomO{6}, \axiomO{7}, simplicity, weak cancellation and `$\dim(\freeVar)\leq n$' (for fixed $n\in\NN$) each satisfy the L{\"o}wenheim-Skolem condition.
\end{pgr}

\begin{prp}
\label{prp:LS-O5O6O7}
Given a \CuSgp{} $S$ satisfying \axiomO{5} (satisfying \axiomO{6}, satisfying \axiomO{7}), the countably based sub-\CuSgp{s}  satisfying \axiomO{5} (satisfying \axiomO{6}, satisfying \axiomO{7}) form a $\sigma$-complete, cofinal subset of $\SubCtbl(S)$.

In particular, the properties \axiomO{5}, \axiomO{6} and \axiomO{7} each satisfy the L{\"o}wenheim-Skolem condition.
\end{prp}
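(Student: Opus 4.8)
The plan is to deduce everything from the local reformulation of \axiomO{5} provided by \autoref{prp:CharO5}, which recasts \axiomO{5} as a \emph{witnessing property of bases}: a \CuSgp{} satisfies \axiomO{5} if and only if some (equivalently, every) basis $B$ has the feature that each admissible six-tuple of its elements admits a witness $c\in B$. Since this condition only quantifies over elements of a basis and only refers to the relation $\ll$ (which, by \autoref{prp:CharSubCu2}, agrees whether computed in a sub-\CuSgp{} or in $S$), it is exactly the kind of ``countable, local'' property that a L\"owenheim--Skolem argument preserves. I will carry out the argument for \axiomO{5}; the cases of \axiomO{6} and \axiomO{7} are verbatim the same, using \autoref{prp:CharO6} and \autoref{prp:CharO7} in place of \autoref{prp:CharO5} (the only change being the number and shape of the witnesses, two elements $v,w$ for \axiomO{6} and a single element for \axiomO{7}). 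Let $\mathcal{R}\subseteq\SubCtbl(S)$ be the collection of countably based sub-\CuSgp{s} of $S$ satisfying \axiomO{5}. The final ``in particular'' assertion is then immediate from the definition of the L\"owenheim--Skolem condition once $\mathcal{R}$ is shown to be $\sigma$-complete and cofinal, so it remains to establish these two properties.

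For \textbf{cofinality}, I would refine the construction of \autoref{prp:genSubCu} by interleaving one extra closure step. Starting from a countable basis $B_0$ of a given $T_0\in\SubCtbl(S)$, I build an increasing sequence $(E_k)_k$ of countable submonoids of $S$, with $E_0$ the submonoid generated by $B_0$, where $E_{k+1}$ is obtained from $E_k$ by adjoining: first, for each $x\in E_k$, the terms of a chosen $\ll$-increasing sequence in $S$ with supremum $x$ (which exists by \axiomO{2}); and second, for each six-tuple of elements of $E_k$ satisfying the hypotheses of the witnessing property, a witness $c\in S$ supplied by \autoref{prp:CharO5} (available for the basis $S$ of itself, since $S$ satisfies \axiomO{5} and is a basis of itself by \axiomO{2}). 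Each $E_{k+1}$ stays countable, so $B_\infty:=\bigcup_k E_k$ is a countable submonoid in which every element is the supremum of a $\ll$-increasing sequence drawn from $B_\infty$. By \autoref{prp:TinDerivedT}, $T:=\overline{B_\infty}^{\txtSeq}$ is a sub-\CuSgp{} with countable basis $B_\infty$, and one checks $T_0\subseteq T$. Crucially, every admissible six-tuple of elements of $B_\infty$ already lies in some $E_k$, so its witness was adjoined to $E_{k+1}\subseteq B_\infty$; hence $B_\infty$ has the witnessing property of \autoref{prp:CharO5} (computed in $T$, which coincides with $S$ on $T$ by \autoref{prp:CharSubCu2}), and therefore $T$ satisfies \axiomO{5} and $T\in\mathcal{R}$.

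For \textbf{$\sigma$-completeness}, let $\mathcal{T}\subseteq\mathcal{R}$ be a countable directed family and set $R:=\overline{\bigcup\mathcal{T}}^{\txtSup}$. As recalled in the discussion preceding this proposition, $R$ is a countably based sub-\CuSgp{}, and I would take as its basis the countable set $B:=\bigcup_{T\in\mathcal{T}}B_T$, where $B_T$ is a fixed countable basis of $T$. To verify \axiomO{5} via \autoref{prp:CharO5}, consider any admissible six-tuple of elements of $B$. Its finitely many entries lie in finitely many of the $B_T$, so directedness of $\mathcal{T}$ places all of them inside a single $T\in\mathcal{T}$ with entries in $B_T$; since $T$ satisfies \axiomO{5} and $B_T$ is a basis of $T$, \autoref{prp:CharO5} furnishes a witness $c\in B_T\subseteq B$. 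Thus $B$ has the witnessing property, so $R$ satisfies \axiomO{5} and $R\in\mathcal{R}$.

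The technical heart is the cofinality step, and the point requiring care is that the limit set $B_\infty$ must \emph{simultaneously} be a basis of a sub-\CuSgp{} (so that \autoref{prp:CharSubCu1} and \autoref{prp:TinDerivedT} apply) and be closed under the selection of \axiomO{5}-witnesses (so that \autoref{prp:CharO5} applies). Interleaving the two closure operations and passing to the countable union reconciles these demands precisely because every finite configuration of elements appears at some finite stage and is processed at the next one. By contrast, $\sigma$-completeness is comparatively soft: directedness reduces each witnessing instance to a single member of $\mathcal{T}$, where \axiomO{5} is already available.
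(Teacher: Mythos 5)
Your argument follows the paper's own strategy in its technical core: the cofinality step --- interleaving, over countably many stages, closure under chosen $\ll$-increasing approximating sequences with closure under \axiomO{5}-witnesses, then passing to $\overline{B_\infty}^{\txtSeq}$ via \autoref{prp:TinDerivedT} and verifying the basis condition of \autoref{prp:CharO5} --- is exactly the paper's construction. (The paper organizes the stages as sub-\CuSgp{s} $R_n$ with chosen bases $B_n$, obtained by alternating \autoref{prp:genSubCu} with witness-adjunction, rather than as bare countable submonoids $E_k$, but the substance is identical.) Where you diverge is $\sigma$-completeness: the paper simply observes that $\overline{\bigcup\mathcal{T}}^{\txtSup}$ is the inductive limit of the directed system $\mathcal{T}$ and invokes the permanence of \axiomO{5} under inductive limits (\autoref{prp:LimitProperties}), whereas you verify the basis condition of \autoref{prp:CharO5} directly for the union basis; your route is self-contained and avoids the limit machinery, which is a reasonable trade-off.

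However, your $\sigma$-completeness paragraph contains one misstatement that would block a literal formalization: directedness of $\mathcal{T}$ places the six entries of an admissible tuple inside a single $T\in\mathcal{T}$, \emph{not} inside its chosen basis $B_T$ --- an element of $B_{T_1}$ need not belong to $B_T$ when $T_1\subseteq T$, so \autoref{prp:CharO5} as stated (which quantifies over tuples \emph{from} the basis) does not directly apply. The repair is one line: $B\cap T$ is again a basis of $T$ (any subset of $T$ containing a basis is a basis), it does contain the six entries as well as $B_T$, and \autoref{prp:CharO5} applies to \emph{every} basis of a \CuSgp{} satisfying \axiomO{5}; the resulting witness then lies in $B\cap T\subseteq B$. (Alternatively, note that the forward-direction argument in the proof of \autoref{prp:CharO5} never uses that the given six-tuple lies in the basis, only that the witness is interpolated into it.) With that correction both halves of your argument are sound, and the ``in particular'' clause follows as you say; the \axiomO{6} and \axiomO{7} cases indeed go through verbatim with Propositions~\ref{prp:CharO6} and~\ref{prp:CharO7}.
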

\begin{proof}
Let $S$ be a \CuSgp{} satisfying \axiomO{5}.
Set
\[
\mathcal{R} := \big\{ R\in\SubCtbl(S) : R \text{ satisfies \axiomO{5}} \big\}.
\]

To show that $\mathcal{R}$ is $\sigma$-complete, let $\mathcal{T}\subseteq\mathcal{R}$ be a countable, directed subset.
Then $\overline{\bigcup \mathcal{T}}^{\txtSup}$ is the inductive limit of the system $\mathcal{T}$.
By \cite[Theorem~4.5]{AntPerThi18TensorProdCu} (see also \autoref{prp:LimitProperties}), \axiomO{5} passes to inductive limits, whence $\overline{\bigcup \mathcal{T}}^{\txtSup}$ belongs to $\mathcal{R}$.

To show that $\mathcal{R}$ is cofinal, let $R_0\in\SubCtbl(S)$.
We need to find $R\in\mathcal{R}$ satisfying $R_0\subseteq R$.
Choose a countable basis $B_0\subseteq R_0$.

We will inductively choose an increasing sequence $(R_n)_n$ in $\SubCtbl(S)$ and a countable basis $B_n\subseteq R_n$ such that for each $n$ the following holds:

\emph{
For every $x',x,y',y,z',z\in B_n$ satisfying $x+y\ll z'\ll z$, $x'\ll x$ and $y'\ll y$, there exists $c\in B_{n+1}$ such that $x'+c\ll z$, $z'\ll x+c$ and $y'\ll c$.
}

We have already obtained $R_0$ and $B_0$.
Let $n\in\NN$ and assume that we have chosen $R_k$ and $B_k$ for all $k\leq n$.
Consider the countable set
\[
I_n := \big\{ (x',x,y',y,z',z)\in B_n^6 : x+y\ll z'\ll z, x'\ll x, y'\ll y \big\}.
\]
Since $S$ satisfies \axiomO{5}, we obtain for each $i=(x',x,y',y,z',z)\in I_n$ an element $c_i\in S$ such that $x'+c_i\ll z$, $z'\ll x+c_i$ and $y'\ll c_i$.
Applying \autoref{prp:genSubCu}, we obtain $R_{n+1}\in\SubCtbl(S)$ containing $B_n\cup\{c_i:i\in I_n\}$.
Since $B_n$ is a basis for $R_n$, we have $R_n\subseteq R_{n+1}$.
Choose a countable basis $B_{n+1}$ for $R_{n+1}$ that contains $B_n$ and each $c_i$ for $i\in I_n$.
This completes the induction step.

Now set $R:=\overline{\bigcup_n R_n}^{\txtSup}$ and $B:=\bigcup_n B_n$.
Then $R$ is a sub-\CuSgp{} of $S$ containing $R_0$.
Further, $B$ is a countable basis of $R$.
By construction, $B$ satisfies the condition from \autoref{prp:CharO5}, showing that $R$ satisfies \axiomO{5}.
Thus, $R$ belongs to $\mathcal{R}$, as desired.

A similar argument, using \autoref{prp:LimitProperties} (to show $\sigma$-completeness) and Propositions \ref{prp:CharO6} and~\ref{prp:CharO7} (to show cofinality) proves that \axiomO{6} and \axiomO{7} satisfy the L{\"o}wenheim-Skolem condition.
\end{proof}

A \CuSgp{} $S$ is \emph{simple} if for all $x,y\in S$ with $y\neq 0$ we have $x\leq\infty y$.

\begin{prp}
\label{prp:LS-SimpleCanc}
Given a simple (weakly cancellative) \CuSgp{} $S$, every sub-\CuSgp{} of $S$ is simple (weakly cancellative).

In particular, simplicity and weak cancellation each satisfy the L{\"o}wenheim-Sko\-lem condition.
\end{prp}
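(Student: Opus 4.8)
The plan is to prove both assertions directly from the definitions, since the structure of the statement---that a given property passes to \emph{every} sub-\CuSgp{}---means the L\"owenheim-Skolem condition follows trivially (one may take $\mathcal{R}=\SubCtbl(S)$, which is $\sigma$-complete by \autoref{prp:TinDerivedT} and tautologically cofinal). So the work is entirely in showing that simplicity and weak cancellation are \emph{hereditary} for sub-\CuSgp{s}.

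For simplicity, let $T\subseteq S$ be a sub-\CuSgp{} with $S$ simple, and let $x,y\in T$ with $y\neq 0$. Since $S$ is simple we have $x\leq\infty y=\sup_n ny$ in $S$. The key point is that this inequality must be witnessed inside $T$. I would argue as follows: choose a $\ll$-increasing sequence $(x_m)_m$ in $T$ with supremum $x$ (possible since $T$ is a \CuSgp, using \axiomO{2}). For each $m$ we have $x_m\ll x\leq\sup_n ny$, so by the definition of the way-below relation there exists $n_m$ with $x_m\leq n_m y$. Since $ny\in T$ for all $n$ (as $T$ is a submonoid), we get $x_m\leq n_m y\leq\infty y$, and taking the supremum over $m$ yields $x\leq\infty y$ in $T$. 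Hence $T$ is simple.

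For weak cancellation, the cleanest route is via \autoref{prp:CharCanc}. Let $T\subseteq S$ be a sub-\CuSgp{} with $S$ weakly cancellative. Since $T$ is a \CuSgp, it has a basis $B\subseteq T$; I would verify the criterion of \autoref{prp:CharCanc} for this basis. Take $x',x,y',y,z',z\in B$ with $x'\ll x$, $y'\ll y$, $z'\ll z$ (way-below relations taken in $T$) and $x+z\ll y'+z'$. The crucial observation is that the inclusion $T\to S$ is a \CuMor{}, hence preserves $\ll$; so all these relations hold equally in $S$. Applying weak cancellation of $S$ to $x+z\ll y'+z'\leq y+z$ (rewritten as $x+z\ll (y'+z')$ with the ambient relation) gives $x'\ll y$ in $S$. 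But $x',y\in T$, and by \autoref{prp:CharSubCu2} (the equivalence of (i) and (ii)) the inclusion $T\to S$ \emph{reflects} $\ll$ as well, so $x'\ll y$ in $T$. This is exactly the condition required by \autoref{prp:CharCanc}, so $T$ is weakly cancellative.

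The main subtlety---and the only place any care is needed---is the bidirectional preservation of the way-below relation. A \CuMor{} preserves $\ll$ automatically, but \emph{reflecting} $\ll$ is the content of \autoref{prp:CharSubCu2} and is precisely what distinguishes a sub-\CuSgp{} (an order-embedded subobject) from an arbitrary image of a \CuMor{}. I expect this to be the only genuinely load-bearing step; once it is invoked, both arguments reduce to transporting the hypothesis into $S$, applying the ambient property, and transporting the conclusion back into $T$. The simplicity argument additionally relies on the submonoid property to ensure $ny\in T$, but that is immediate.
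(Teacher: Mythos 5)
Your proposal is correct and takes essentially the same approach as the paper: both parts reduce to transferring the relevant relations along the inclusion $T\to S$, using that the inclusion preserves $\ll$ (being a \CuMor) and, crucially, reflects $\leq$ and $\ll$ (being an order-embedding, cf.\ \autoref{prp:CharSubCu2}), with the L{\"o}wenheim-Skolem claim then following trivially from heredity. The paper's write-up is only slightly more direct---it applies the order-embedding to $x\leq\infty y$ at once for simplicity, and verifies the definition of weak cancellation directly rather than detouring through the basis criterion of \autoref{prp:CharCanc}---but these differences are cosmetic.
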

\begin{proof}
Let $S$ be a simple \CuSgp, and let $T\subseteq S$ be a sub-\CuSgp.
To verify that $T$ is simple, let $x,y\in T$ with $y\neq 0$.
Since $S$ is simple, we have $x\leq\infty y$ in $S$, and since the inclusion $T\to S$ is an order-embedding, we get $x\leq\infty y$ in $T$.

Let us now assume that $S$ is a weakly cancellative \CuSgp, and let $T\subseteq S$ be a sub-\CuSgp.
To show that $T$ is weakly cancellative, let $x,y,z\in T$ satisfy $x+z\ll y+z$.
This implies that $x\ll y$ in $S$, and thus $x\ll y$ in $T$.
\end{proof}

\begin{lma}
\label{prp:subCuDim}
Let $S$ be a \CuSgp, and let $T_0\subseteq S$ be a countably based sub-\CuSgp.
Then there exists a countably based sub-\CuSgp{} $T\subseteq S$ such that $T_0\subseteq T$ and $\dim(T)\leq\dim(S)$.
\end{lma}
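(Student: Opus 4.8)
I would mimic the inductive construction in the proof of \autoref{prp:LS-O5O6O7}, but where the ``closing up'' step is dictated by the witnesses required by \autoref{dfn:dim} rather than by the basis characterization of \axiomO{5}. Set $n:=\dim(S)$, which we may assume finite. The goal is to enlarge $T_0$ to a countably based sub-\CuSgp{} $T$ in which every instance of the defining inequality $x'\ll x\ll y_1+\ldots+y_r$ can be witnessed by elements $z_{j,k}$ that already lie in $T$.

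\emph{First}, I would fix a countable basis $B_0\subseteq T_0$ and build an increasing sequence $(R_m)_m$ of countably based sub-\CuSgp{s} together with countable bases $B_m\subseteq R_m$. At each stage I would consider the countable index set of all tuples $(x',x,y_1,\ldots,y_r)$ from $B_m$ (ranging over all $r\in\NN$, or equivalently restricting $r$ since only finitely many basis elements appear) satisfying $x'\ll x\ll y_1+\ldots+y_r$. For each such tuple, since $\dim(S)\leq n$, I obtain witnesses $z_{j,k}\in S$ for $j=1,\ldots,r$ and $k=0,\ldots,n$ satisfying conditions (i)--(iii) of \autoref{dfn:dim}. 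Collecting all these countably many witnesses and applying \autoref{prp:genSubCu}, I get $R_{m+1}\in\SubCtbl(S)$ containing $B_m$ together with all the chosen $z_{j,k}$; I then fix a countable basis $B_{m+1}\supseteq B_m$ of $R_{m+1}$ also containing these witnesses. \emph{Then} I would set $T:=\overline{\bigcup_m R_m}^{\txtSup}$ with $B:=\bigcup_m B_m$ serving as a countable basis, so that $T$ is a countably based sub-\CuSgp{} containing $T_0$.

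\emph{The verification that $\dim(T)\leq n$} is where I expect the main subtlety. Given $x'\ll x\ll y_1+\ldots+y_r$ in $T$, I first use that $B$ is a basis to replace $x',x,y_1,\ldots,y_r$ by nearby basis elements: choose $\bar x',\bar x\in B$ with $x'\ll\bar x'\ll\bar x\ll x$, and $\bar y_j\in B$ with $\bar y_j\ll y_j$ and $\bar x\ll\bar y_1+\ldots+\bar y_r$ (possible by \axiomO{1}--\axiomO{2} and \axiomO{3} applied in $S$, since $\ll$ is interpolative on a basis). All these basis elements lie in some common $B_m$. By construction of $R_{m+1}$ there are witnesses $z_{j,k}\in B_{m+1}\subseteq T$ with $z_{j,k}\ll\bar y_j\ll y_j$, with $\bar x'\ll\sum_{j,k}z_{j,k}$ hence $x'\ll\sum_{j,k}z_{j,k}$, and with $\sum_{j=1}^r z_{j,k}\ll\bar x\ll x$ for each $k$. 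These are exactly conditions (i)--(iii) for the original tuple, so $\dim(T)\leq n$.

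\emph{The hard part} is making sure the witnesses chosen for tuples of \emph{basis} elements suffice to witness \emph{arbitrary} instances in $T$: the reduction of a general $\ll$-instance in $T$ to a basis instance must be done so that the resulting witnesses still satisfy all three conditions relative to the original data, with the strict inequalities in (i) and (iii) preserved. This is precisely why I interpolate on \emph{both} sides (shrinking $x$ to $\bar x$ while enlarging $x'$ to $\bar x'$, and shrinking the $y_j$ to $\bar y_j$) before invoking the construction, so that the way-below relations $z_{j,k}\ll\bar y_j$ and $\sum_j z_{j,k}\ll\bar x$ upgrade correctly to $z_{j,k}\ll y_j$ and $\sum_j z_{j,k}\ll x$. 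One should note that all the $\ll$-relations are tested in $S$, but since $T\subseteq S$ is a sub-\CuSgp{}, the inclusion is an order-embedding preserving $\ll$ (\autoref{prp:CharSubCu2}), so the witnesses verify $\dim(T)\leq n$ intrinsically in $T$.
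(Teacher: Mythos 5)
Your proposal is correct and takes essentially the same route as the paper's proof: iteratively adjoin, via \autoref{prp:genSubCu}, witnesses for the countably many tuples of basis elements satisfying $x'\ll x\ll y_1+\ldots+y_r$, pass to the sup-closure of the union (a countably based sub-\CuSgp{} by \autoref{prp:TinDerivedT}), and verify $\dim(T)\leq n$ by interpolating arbitrary instances through basis elements exactly as you describe. The only differences are cosmetic: the paper packages one enlargement step as a claim about a pair $P\subseteq Q$ of countably based sub-\CuSgp{s} and does the basis interpolation inside that claim rather than at the end, and the step producing $\bar{y}_j\ll y_j$ with $\bar{x}\ll\bar{y}_1+\ldots+\bar{y}_r$ rests on \axiomO{4} (suprema commute with sums) rather than \axiomO{3}.
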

\begin{proof}
Set $n:=\dim(S)$.
If $n=\infty$, then $T:=T_0$ has the desired properties.
Thus, we may assume that $n$ is finite.

\textbf{Claim:} \emph{Let $P\subseteq S$ be a countably based sub-\CuSgp.
Then there exists a countably based sub-\CuSgp{} $Q\subseteq S$ satisfying $P\subseteq Q$ and with the following property:
Whenever $x'\ll x\ll y_1+\ldots+y_r$ in $P$, then there exist $z_{j,k}\in Q$ for $j=1,\ldots,r$ and $k=0,\ldots,n$ satisfying (i)-(iii) from \autoref{dfn:dim}.
}

To prove the claim, choose a countable basis $B\subseteq P$.
For each $r\geq 1$, consider the countable set
\[
I_r := \big\{ (x',x,y_1,\ldots,y_r)\in B^{r+2} : x'\ll x\ll y_1+\ldots+y_r \big\}.
\]
For each $i=(x',x,y_1,\ldots,y_r)\in I_r$, we apply $\dim(S)\leq n$ for $x'\ll x\ll y_1+\ldots+y_r$ to obtain elements $z_{i,j,k}\in S$ for $j=1,\ldots,r$ and $k=0,\ldots,n$ satisfying (i)-(iii) from \autoref{dfn:dim}.
Applying \autoref{prp:genSubCu}, we obtain a countably based sub-\CuSgp{} $Q\subseteq S$ that contains $B$ and each $z_{i,j,k}$ for $r\geq 1$, $i\in I_r$, $j=1,\ldots,r$, and $k=0,\ldots,n$.
Since $B$ is a basis for $P$, we have $P\subseteq Q$.

To verify that $Q$ has the claimed property, let $x'\ll x\ll y_1+\ldots+y_r$ in $P$.
Using that $B$ is a basis, we can choose $c',c,d_1,\ldots,d_r\in B$ such that
\[
x'\ll c'\ll c\ll x\ll d_1+\ldots+d_r, \quad d_1\ll y_1, \quad \ldots,\quad d_r\ll y_r.
\]
Then $i:=(c',c,d_1,\ldots,d_r)$ belongs to $I_r$.
By construction, $Q$ contains the elements $z_{i,j,k}$, which satisfy (i)-(iii) from \autoref{dfn:dim} for $c'\ll c\ll d_1+\ldots+d_r$, and it is easy to see that these same elements satisfy (i)-(iii) from \autoref{dfn:dim} for $x'\ll x\ll y_1+\ldots+y_r$.
This proves the claim.

Now, we successively apply the claim to obtain an increasing sequence $(T_k)_k$ of countably based sub-\CuSgp{s} $T_k\subseteq S$ such that for every $k\in\NN$ and $x'\ll x\ll y_1+\ldots+y_r$ in $T_k$ there exist $z_{j,k}\in T_{k+1}$ for $j=1,\ldots,r$ and $k=0,\ldots,n$ satisfying (i)-(iii) from \autoref{dfn:dim}.

Let $T_\infty:=\bigcup_k T_k$, which by construction is a submonoid of $S$ such that every element in $T_\infty$ is the supremum of a $\ll$-increasing sequence in $T_\infty$.
Set $T:=\overline{T_\infty}^{\txtSeq}$.
By \autoref{prp:TinDerivedT}, $T$ is a sub-\CuSgp{} of $S$ satisfying $T_0\subseteq T$.
It is now straightforward to verify that $T$ is countably based and satisfies $\dim(T)\leq n$.
\end{proof}

\begin{prp}
\label{prp:LS-Dim}
Let $n\in\NN$.
Given a \CuSgp{} $S$ satisfying $\dim(S)\leq n$, the countably based sub-\CuSgp{s} $T\subseteq S$ satisfying $\dim(T)\leq n$ form a $\sigma$-complete, cofinal subset of $\SubCtbl(S)$.

In particular, the property of \CuSgp{s} of having dimension at most $n$ satisfies the L{\"o}wenheim-Sko\-lem condition.
\end{prp}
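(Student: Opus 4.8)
The plan is to follow the template of \autoref{prp:LS-O5O6O7}: define the candidate subcollection and verify $\sigma$-completeness and cofinality separately, after which the ``in particular'' statement about the L{\"o}wenheim-Skolem condition is immediate from the definition. Concretely, I would set
\[
\mathcal{R} := \big\{ T\in\SubCtbl(S) : \dim(T)\leq n \big\},
\]
and show that $\mathcal{R}$ is a $\sigma$-complete, cofinal subset of $\SubCtbl(S)$.

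For $\sigma$-completeness, let $\mathcal{T}\subseteq\mathcal{R}$ be a countable, directed subset. As recalled in the paragraph preceding \autoref{prp:LS-O5O6O7}, the sup-closure $\overline{\bigcup\mathcal{T}}^{\txtSup}$ is precisely the inductive limit of the system $\mathcal{T}$, with the inclusions as connecting morphisms. Since $\dim(T)\leq n$ for every $T\in\mathcal{T}$, the inductive-limit permanence property for dimension---either \autoref{prp:Permanence} or the corresponding statement in \autoref{prp:LimitProperties}---yields $\dim(\overline{\bigcup\mathcal{T}}^{\txtSup})\leq n$. Hence $\overline{\bigcup\mathcal{T}}^{\txtSup}\in\mathcal{R}$, as required.

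For cofinality, this is exactly the content of \autoref{prp:subCuDim}: given any $T_0\in\SubCtbl(S)$, that lemma produces a countably based sub-\CuSgp{} $T\subseteq S$ with $T_0\subseteq T$ and $\dim(T)\leq\dim(S)\leq n$, so $T\in\mathcal{R}$ and $T_0\subseteq T$. This establishes that $\mathcal{R}$ is cofinal, and completes the proof.

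In effect, the entire argument reduces to assembling \autoref{prp:subCuDim} (for cofinality) with inductive-limit permanence (for $\sigma$-completeness), so no genuine obstacle is expected here---the substantive work lives in the iterated basis-enlargement argument inside \autoref{prp:subCuDim}. The only point that warrants care is matching the hypotheses of the inductive-limit permanence result, namely the fact, recorded in the paragraph before \autoref{prp:LS-O5O6O7}, that a countable directed union of countably based sub-\CuSgp{s} is indeed realized as an inductive limit in the category $\mathrm{Cu}$ via its sup-closure.
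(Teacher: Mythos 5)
Your proposal is correct and follows essentially the same route as the paper's proof: the paper likewise defines $\mathcal{R}$ as the countably based sub-\CuSgp{s} of dimension at most $n$, invokes the inductive-limit permanence of \autoref{prp:Permanence} for $\sigma$-completeness (via the identification of $\overline{\bigcup\mathcal{T}}^{\txtSup}$ with the inductive limit of $\mathcal{T}$), and cites \autoref{prp:subCuDim} for cofinality. The only difference is that you spell out the details the paper leaves implicit.
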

\begin{proof}
Let $\mathcal{R}$ be the collection of sub-\CuSgp{s} $T\subseteq S$ satisfying $\dim(T)\leq n$.
By \autoref{prp:Permanence}, the property of having dimension at most $n$ passes to inductive limits, which shows that $\mathcal{R}$ is $\sigma$-complete.
Further, $\mathcal{R}$ is cofinal by \autoref{prp:subCuDim}.
\end{proof}

\begin{thm}
\label{prp:CharDimCtbl}
Let $S$ be a \CuSgp, and let $n\in\NN$.
Then the following are equivalent:
\begin{enumerate}
\item
$\dim(S)\leq n$;
\item
every countable subset of $S$ is contained in a countably based sub-\CuSgp{} $T\subseteq S$ satisfying $\dim(T)\leq n$;
\item
every finite subset of $S$ is contained in a sub-\CuSgp{} $T\subseteq S$ satisfying $\dim(T)\leq n$.
\end{enumerate}
\end{thm}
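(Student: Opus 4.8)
The plan is to prove the cycle of implications $(1)\Rightarrow(2)\Rightarrow(3)\Rightarrow(1)$. The implication $(1)\Rightarrow(2)$ is essentially a restatement of the cofinality part of \autoref{prp:LS-Dim}: assuming $\dim(S)\leq n$, that proposition provides a cofinal family of countably based sub-\CuSgp{s} of dimension at most $n$, and cofinality is stated in terms of containing arbitrary elements of $\SubCtbl(S)$. So given a countable subset $C\subseteq S$, I would first invoke \autoref{prp:genSubCu} to enclose $C$ in some countably based sub-\CuSgp{} $T_0$, and then apply \autoref{prp:subCuDim} directly to $T_0$ to obtain a countably based sub-\CuSgp{} $T$ with $T_0\subseteq T$ and $\dim(T)\leq n$. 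Since $C\subseteq T_0\subseteq T$, this is exactly what $(2)$ requires.

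The implication $(2)\Rightarrow(3)$ is immediate, since every finite subset is in particular countable, and a countably based sub-\CuSgp{} is in particular a sub-\CuSgp{}; thus the witnessing $T$ from $(2)$ serves directly for $(3)$. This step requires no work beyond noting these weakenings.

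The substantive direction is $(3)\Rightarrow(1)$, and this is where I expect the main obstacle to lie. The goal is to verify $\dim(S)\leq n$ directly from \autoref{dfn:dim}. So I would start with elements $x'\ll x\ll y_1+\ldots+y_r$ in $S$ and aim to produce the decomposition elements $z_{j,k}\in S$. The natural move is to apply $(3)$ to the finite set $\{x',x,y_1,\ldots,y_r\}$ to obtain a sub-\CuSgp{} $T\subseteq S$ containing these elements with $\dim(T)\leq n$. The key point to check is that the way-below relations $x'\ll x\ll y_1+\ldots+y_r$, which hold in $S$, still hold \emph{inside} $T$; this is guaranteed because the inclusion $T\to S$ is a \CuMor{} and in fact an order-embedding (by \autoref{prp:CharSubCu2}, since $T$ is a sub-\CuSgp), so the way-below relation is detected equally in $T$ and in $S$. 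Once $x'\ll x\ll y_1+\ldots+y_r$ holds in $T$, I can apply $\dim(T)\leq n$ to obtain $z_{j,k}\in T\subseteq S$ satisfying conditions (i)--(iii) of \autoref{dfn:dim} within $T$; the order-embedding property then transports each of these relations back to $S$, so the very same elements witness $\dim(S)\leq n$. The only delicate point is confirming that all three defining conditions involve only $\ll$ and $\leq$ relations that are preserved and reflected by the order-embedding $T\hookrightarrow S$, which follows from the equivalence of $\ll$ in $T$ and in $S$ established in \autoref{prp:CharSubCu2}(2).
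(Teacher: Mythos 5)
Your proof is correct, and implications (1)$\Rightarrow$(2) and (2)$\Rightarrow$(3) coincide exactly with the paper's (\autoref{prp:genSubCu} followed by \autoref{prp:subCuDim}, then the trivial weakening). For (3)$\Rightarrow$(1) you take a genuinely different route: the paper observes that the family of inclusions $\iota_T\colon T\to S$ of sub-\CuSgp{s} with $\dim(T)\leq n$ approximates $S$ in the sense of \autoref{dfn:ApproxCu} and then invokes \autoref{prp:ApproxDim}, whereas you verify \autoref{dfn:dim} directly, using that the inclusion of a sub-\CuSgp{} is an order-embedding and hence (\autoref{prp:CharSubCu2}) detects $\ll$ equally in $T$ and in $S$, so the witnesses $z_{j,k}$ found in $T$ work verbatim in $S$. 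Your argument is more elementary and self-contained: it bypasses the approximation machinery entirely, and precisely because sub-\CuSgp{} inclusions reflect $\ll$, no interpolation bookkeeping is needed; by contrast, checking that the family actually approximates $S$ in the sense of \autoref{dfn:ApproxCu} requires first interpolating $x_j'\ll x_j''\ll x_j$ and applying (3) to the elements $x_j''$, a small step the paper leaves implicit. What the paper's route buys is brevity and reuse of the general \autoref{prp:ApproxDim}, which is built to handle families whose maps are \emph{not} order-embeddings; your argument is essentially the proof of that proposition specialized and streamlined to the order-embedded case. One minor wording point: the order-embedding property of $T\hookrightarrow S$ is immediate from the definition of sub-\CuSgp{} (the order on $T$ is the induced one); what \autoref{prp:CharSubCu2} adds, and what you actually need and correctly cite it for, is that this order-embedding also reflects the way-below relation.
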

\begin{proof}
It follows from Lemmas \ref{prp:genSubCu} and \ref{prp:subCuDim} that~(1) implies~(2).
It is clear that~(2) implies~(3).
To show that~(3) implies~(1), let $\mathcal{T}$ be the collection of sub-\CuSgp{s} with dimension at most $n$.
For each $T\in\mathcal{T}$, let $\iota_T\colon T\to S$ denote the inclusion map.
It follows from the assumption that the family $(T,\iota_T)_{T\in\mathcal{T}}$ approximates $S$.
Hence, we have $\dim(S)\leq n$ by \autoref{prp:ApproxDim}.
\end{proof}

As an application of the methods developed in this section, we can remove the assumption of being countably based in several results from \cite{ThiVil21arX:DimCu}. We first generalize \cite[Proposition~7.14]{ThiVil21arX:DimCu}.

\begin{prp}
\label{prp:Dim0ImplInterpol}
Let $S$ be a zero-dimensional, simple, weakly cancellative \CuSgp{} satisfying \axiomO{5}.
Then $S$ has the Riesz interpolation property.
If we additionally assume that $S$ is nonelementary, then $S$ is almost divisible.
\end{prp}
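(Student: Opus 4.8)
The plan is to deduce both statements from their countably based case, which is \cite[Proposition~7.14]{ThiVil21arX:DimCu}, by means of the L\"owenheim--Skolem conditions established above. The key observation is that Riesz interpolation and almost divisibility are \emph{local}: each asserts that for a certain finite configuration of elements a single witnessing element exists. Since the inclusion of a sub-\CuSgp{} both preserves and reflects $\leq$ and $\ll$ (\autoref{prp:CharSubCu2}), it suffices to show that every finite configuration in $S$ is contained in a countably based sub-\CuSgp{} $T\subseteq S$ that is again zero-dimensional, simple, weakly cancellative and satisfies \axiomO{5} (and, for almost divisibility, is also nonelementary): the witness furnished by \cite[Proposition~7.14]{ThiVil21arX:DimCu} inside $T$ is then automatically a witness in $S$.

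To produce such a $T$, I would first combine the relevant L\"owenheim--Skolem conditions. By \autoref{prp:LS-O5O6O7} and \autoref{prp:LS-Dim} there are $\sigma$-complete, cofinal collections $\mathcal{R}_{\mathrm{O5}},\mathcal{R}_{\mathrm{dim}}\subseteq\SubCtbl(S)$ whose members satisfy \axiomO{5}, respectively $\dim(\freeVar)\leq 0$, while by \autoref{prp:LS-SimpleCanc} simplicity and weak cancellation are inherited by \emph{every} sub-\CuSgp. A routine interleaving argument shows that a finite intersection of $\sigma$-complete, cofinal subsets of $\SubCtbl(S)$ is again $\sigma$-complete and cofinal: given $T_0$, one builds an increasing sequence by alternately enlarging within each collection, and its supremum in the lattice of sub-\CuSgp{s} (\autoref{prp:LatticeSubCu}) lies in every collection by $\sigma$-completeness. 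Hence $\mathcal{R}:=\mathcal{R}_{\mathrm{O5}}\cap\mathcal{R}_{\mathrm{dim}}$ is cofinal, and each $T\in\mathcal{R}$ is simultaneously countably based, zero-dimensional, simple, weakly cancellative and \axiomO{5}. This already settles interpolation: a given interpolation datum in $S$ has its finitely many entries placed in some $T\in\mathcal{R}$ by cofinality, the datum persists in $T$ for the induced order, \cite[Proposition~7.14]{ThiVil21arX:DimCu} yields an interpolant in $T$, and \autoref{prp:CharSubCu2} returns it to $S$.

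The remaining, and main, obstacle is that for almost divisibility the sub-\CuSgp{} $T$ must also be \emph{nonelementary}, a property not treated in \autoref{prp:LS-O5O6O7}--\autoref{prp:LS-Dim}. Unlike simplicity or weak cancellation it is not inherited by arbitrary sub-\CuSgp{s}, and unlike a single-instance condition it is genuinely universal: for a simple \CuSgp{} nonelementariness asks that \emph{every} nonzero element admit a proper subdivision (there is no atom), and no single instance detects it---for example $1\ll 2$ holds in the elementary semigroup $\overline{\NN}$. I would therefore prove that nonelementariness, too, satisfies the L\"owenheim--Skolem condition, by the tower scheme of \autoref{prp:genSubCu} and the proof of \autoref{prp:LS-O5O6O7}: at each stage adjoin a proper subdivision in $S$ of every nonzero element collected so far (available since $S$ is nonelementary), then close up and pass to the sup-closure via \autoref{prp:TinDerivedT}. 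Every nonzero element of the resulting countably based sub-\CuSgp{}---including a newly formed supremum $x=\sup_n x_n$, which inherits a subdivision from any nonzero $x_n\ll x$---then admits a proper subdivision, so the semigroup is nonelementary; this yields a $\sigma$-complete, cofinal family $\mathcal{R}_{\mathrm{ne}}$. Intersecting $\mathcal{R}_{\mathrm{ne}}$ with $\mathcal{R}$ as before, applying the almost-divisibility half of \cite[Proposition~7.14]{ThiVil21arX:DimCu} inside a member containing the given configuration, and transporting the witness back along the order-embedding (\autoref{prp:CharSubCu2}) completes the proof.
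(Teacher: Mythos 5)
Your overall strategy---reduce to the countably based case via the L\"owenheim--Skolem machinery, then transport witnesses along the order-embedding of \autoref{prp:CharSubCu2}---is the same as the paper's, and your treatment of almost divisibility works. The genuine gap is in the interpolation half: \cite[Proposition~7.14]{ThiVil21arX:DimCu} carries \emph{nonelementary} as a standing hypothesis for \emph{both} of its conclusions, not only for almost divisibility, so you cannot apply it to a member $T$ of $\mathcal{R}_{\mathrm{O5}}\cap\mathcal{R}_{\mathrm{dim}}$ that might be elementary. This is exactly why the paper's proof first disposes of the case that $S$ itself is elementary, using \cite[Proposition~5.1.19]{AntPerThi18TensorProdCu} to see that such an $S$ is isomorphic to $\{0,1,\ldots,\infty\}$ or to $\{0,1,\ldots,n\}$, hence totally ordered and trivially has interpolation; and why, for nonelementary $S$, it fixes a strictly decreasing sequence $s_0>s_1>\cdots$ and demands that the sub-\CuSgp{} $R$ produced by cofinality contain all the $s_n$, which forces $R$ to be nonelementary \emph{before} Proposition~7.14 is invoked. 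Note that the elementary case cannot be reached by any L\"owenheim--Skolem argument, including your $\mathcal{R}_{\mathrm{ne}}$: an elementary $S$ satisfying the hypotheses is well-ordered, so every sub-\CuSgp{} of it is again elementary, and yet such $S$ (e.g.\ $\overline{\NN}$) satisfies all assumptions of the statement. A separate, direct argument for elementary $S$ is therefore unavoidable, and it is missing from your proof.

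Apart from this, your divisibility argument is correct but heavier than necessary. Your tower construction proving a L\"owenheim--Skolem condition for nonelementariness (adjoin a strictly smaller nonzero element below each nonzero basis element at every stage, and observe that sup-closure preserves the absence of minimal nonzero elements) is valid, but the paper gets the same effect more cheaply: it chooses one strictly decreasing sequence $(s_n)_n$ in $S$ once and for all, and simply asks the sub-\CuSgp{} given by cofinality of $\mathcal{R}$ to contain it, since containing such a sequence already rules out elementarity. Either device also repairs your interpolation argument when $S$ is nonelementary; combined with the classification argument for elementary $S$, this yields precisely the paper's proof.
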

\begin{proof}
If $S$ is elementary, then $S$ is isomorphic to $\{0,1,\ldots,\infty\}$, or to $\{0,1,\ldots,n\}$ for some $n\in\NN$; see \cite[Proposition~5.1.19]{AntPerThi18TensorProdCu}.
In either case, $S$ has the Riesz interpolation property.
So we may assume from now on that $S$ is nonelementary.
This allows us to choose a sequence $(s_n)_n$ in $S$ with $s_0>s_1>\ldots$.

Let $\mathcal{R}_{\mathrm{O5}}$, $\mathcal{R}_{\mathrm{simple}}$, $\mathcal{R}_{\mathrm{canc}}$, and $\mathcal{R}_{\mathrm{dim0}}$ be the collections of countably generated sub-\CuSgp{s} of $S$ that satisfy \axiomO{5}, or that are simple, weakly cancellative, or zero-dimensional, respectively.
By Propositions~\ref{prp:LS-O5O6O7}, \ref{prp:LS-SimpleCanc}, and~\ref{prp:LS-Dim}, each of these collections are $\sigma$-complete and cofinal.
Set $\mathcal{R}:=\mathcal{R}_{\mathrm{O5}}\cap\mathcal{R}_{\mathrm{simple}}\cap\mathcal{R}_{\mathrm{canc}}\cap\mathcal{R}_{\mathrm{dim0}}$.
Then $\mathcal{R}$ is $\sigma$-complete and cofinal.

To verify that $S$ has the Riesz interpolation property, let $x_0,x_1,y_0,y_1\in S$ satisfy $x_j\leq y_k$ for all $j,k\in\{0,1\}$.
We need to find $z\in S$ such that $x_j\leq z\leq y_k$ for all $j,k\in\{0,1\}$.
Using \autoref{prp:genSubCu} and that $\mathcal{R}$ is cofinal, we obtain $R\in\mathcal{R}$ containing $x_0,x_1,y_0,y_1$ and containing $s_0,s_1,\ldots$, which forces $R$ to be nonelementary.

Note that $R$ is a zero-dimensional, countably based, simple, weakly cancellative, nonelementary \CuSgp{} satisfying \axiomO{5}.
By \cite[Proposition~7.14]{ThiVil21arX:DimCu}, $R$ has the Riesz interpolation property.
We therefore obtain $z$ with the desired properties in $R\subseteq S$.

To verify that $S$ is almost divisible, let $n\geq 1$, and let $x',x\in S$ satisfy $x'\ll x$.
We need to find $z\in S$ such that $nz\ll x$ and $x'\ll(n+1)z$.
As above, we obtain $R\in\mathcal{R}$ containing $x',x,s_0,s_1,\ldots$.
By \cite[Proposition~7.14]{ThiVil21arX:DimCu}, $R$ is almost divisible, which allows us to find $z$ with the desired properties in $R$.
\end{proof}

The next result generalizes \cite[Proposition~3.17]{ThiVil21arX:DimCu}.
Recall that an element $x$ in a \CuSgp{} $S$ is said to be \emph{soft} if for every $x'\ll x$ there exists $k\in\NN$ such that $(k+1)x'\ll kx$. 
The set of soft elements, denoted by $S_{\rm{soft}}$, is a sub-\CuSgp{} of~$S$ satisfying \axiomO{5} and \axiomO{6} whenever $S$ is simple, weakly cancellative and satisfies \axiomO{5} and \axiomO{6}; 
see \cite[Proposition~5.3.18]{AntPerThi18TensorProdCu}.

\begin{prp}
\label{prp:DimSoftPart}
Let $S$ be a simple, weakly cancellative \CuSgp{} satisfying \axiomO{5} and \axiomO{6}.
Then
\[
\dim(S_{\rm{soft}})\leq\dim(S)\leq\dim(S_{\rm{soft}})+1.
\]
\end{prp}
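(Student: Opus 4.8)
The plan is to prove the two inequalities separately and, crucially, to reduce the general (possibly uncountably based) case to the countably based case by invoking the Löwenheim--Skolem machinery developed in this section. For the countably based case, the corresponding statement is \cite[Proposition~3.17]{ThiVil21arX:DimCu}, which I will assume. Thus the heart of the argument is an approximation/Löwenheim--Skolem reduction that is by now routine given \autoref{prp:LS-O5O6O7}, \autoref{prp:LS-SimpleCanc}, \autoref{prp:LS-Dim} and \autoref{prp:ApproxDim}.

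\textbf{First inequality: $\dim(S_{\rm{soft}})\leq\dim(S)$.} I would argue that $S_{\rm{soft}}$ is a sub-\CuSgp{} of $S$ (which is recalled just before the statement, citing \cite[Proposition~5.3.18]{AntPerThi18TensorProdCu}). The natural hope would be a general monotonicity statement ``$\dim$ of a sub-\CuSgp{} is at most $\dim$ of the ambient semigroup,'' but no such result is stated in the excerpt, so I would instead go through the countably based reduction. Concretely: to show $\dim(S_{\rm{soft}})\leq n:=\dim(S)$ via \autoref{prp:CharDimCtbl}(3), it suffices to show that every finite subset of $S_{\rm{soft}}$ is contained in a sub-\CuSgp{} of $S_{\rm{soft}}$ of dimension $\leq n$. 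Using \autoref{prp:subCuDim} I can enlarge a given countable subset to a countably based sub-\CuSgp{} $R\subseteq S$ with $\dim(R)\leq n$, and using the intersections of the $\sigma$-complete cofinal collections from \autoref{prp:LS-SimpleCanc} and \autoref{prp:LS-O5O6O7} I can moreover arrange $R$ to be simple, weakly cancellative, and to satisfy \axiomO{5} and \axiomO{6}. Then $R_{\rm{soft}}$ is a sub-\CuSgp{} of $R$ (again by \cite[Proposition~5.3.18]{AntPerThi18TensorProdCu}), and the countably based Proposition~3.17 gives $\dim(R_{\rm{soft}})\leq\dim(R)\leq n$. The remaining point is compatibility of softness with passage to sub-\CuSgp{s}: I must check that $R_{\rm{soft}}$ sits inside $S_{\rm{soft}}$ and that it absorbs the chosen finite set. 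Since softness is defined by the inequalities $(k+1)x'\ll kx$ and the inclusion $R\to S$ preserves and reflects $\ll$ (\autoref{prp:CharSubCu2}), an element of $R$ is soft in $R$ if and only if it is soft in $S$; this identification is what makes $R_{\rm{soft}}=R\cap S_{\rm{soft}}$ a sub-\CuSgp{} of $S_{\rm{soft}}$ containing the prescribed finite set, yielding the bound through \autoref{prp:CharDimCtbl}.

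\textbf{Second inequality: $\dim(S)\leq\dim(S_{\rm{soft}})+1$.} This is the substantial direction, and I expect it to be the main obstacle. Again I would reduce to the countably based case: set $m:=\dim(S_{\rm{soft}})$, and to verify $\dim(S)\leq m+1$ directly from \autoref{dfn:dim}, take $x'\ll x\ll y_1+\cdots+y_r$ in $S$ and produce the decomposing elements $z_{j,k}$ with $m+2$ ``colours.'' The clean way is to pull all relevant data into a well-chosen countably based sub-\CuSgp{} $R\subseteq S$ that is simultaneously simple, weakly cancellative, satisfies \axiomO{5} and \axiomO{6}, contains $x',x,y_1,\dots,y_r$, and \emph{has $\dim(R_{\rm{soft}})\leq m$}. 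The first few properties come from intersecting the $\sigma$-complete cofinal collections as above; the constraint $\dim(R_{\rm{soft}})\leq m$ requires an extra Löwenheim--Skolem step applied to the soft part. Once such an $R$ is in hand, the countably based Proposition~3.17 gives $\dim(R)\leq\dim(R_{\rm{soft}})+1\leq m+1$, and since $R$ is a sub-\CuSgp{} the witnesses $z_{j,k}\in R$ transport back to $S$ verifying the dimension condition there (this last transport is exactly the mechanism of \autoref{prp:CharDimCtbl}, so in practice I would simply check condition~(3) of that theorem for $n=m+1$).

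\textbf{Where the difficulty concentrates.} The genuinely delicate point is arranging a single countably based sub-\CuSgp{} $R$ whose soft part $R_{\rm{soft}}$ has controlled dimension $\leq\dim(S_{\rm{soft}})$, rather than merely being some countably based sub-\CuSgp{} of $S_{\rm{soft}}$. This means I need a $\sigma$-complete, cofinal family of countably based sub-\CuSgp{s} of $S$ whose soft parts inherit dimension $\leq m$ from $S_{\rm{soft}}$. I would build this by running the Löwenheim--Skolem construction of \autoref{prp:subCuDim} ``on the soft part'': interleave the generation steps so that at each stage the soft elements that have been added are also closed up to witness the dimension-$\leq m$ condition inside $S_{\rm{soft}}$, and simultaneously close up for \axiomO{5}, \axiomO{6}, simplicity and weak cancellation. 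The identification $R_{\rm{soft}}=R\cap S_{\rm{soft}}$ (from the $\ll$-reflecting property of the inclusion) is what lets the dimension bound for $S_{\rm{soft}}$ descend to $R_{\rm{soft}}$. Assembling these finitely many simultaneous closure requirements into one increasing sequence and taking the sup-closure, exactly as in the proofs of \autoref{prp:LS-O5O6O7} and \autoref{prp:subCuDim}, produces the desired $R$ and completes the reduction to \cite[Proposition~3.17]{ThiVil21arX:DimCu}.
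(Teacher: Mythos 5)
Your proposal is correct and follows essentially the same route as the paper: both inequalities are reduced to the countably based case (\cite[Proposition~3.17]{ThiVil21arX:DimCu}) via \autoref{prp:CharDimCtbl}, using the L{\"o}wenheim--Skolem results \autoref{prp:LS-O5O6O7}, \autoref{prp:LS-SimpleCanc}, \autoref{prp:LS-Dim}, and your interleaved closure scheme for the second inequality is exactly the paper's zig-zag construction $T^{(k)}_{\rm soft}\subseteq R^{(k)}\subseteq T^{(k+1)}$ with $T_{\rm soft}=R$ in the limit. You correctly identify both where the difficulty concentrates (controlling $\dim$ of the soft part of the countably based model) and the key compatibility fact that the order-embedding makes softness in the sub-\CuSgp{} agree with softness in $S$.
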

\begin{proof}
To prove the first inequality, set $n:=\dim(S)$, which we may assume to be finite.
To verify condition~(3) of \autoref{prp:CharDimCtbl}, let $H$ be a finite subset of $S_{\rm{soft}}$. 

Proceeding as in the proof of \autoref{prp:Dim0ImplInterpol}, and using \autoref{prp:genSubCu} and Propositions \ref{prp:LS-O5O6O7}, \ref{prp:LS-SimpleCanc} and \ref{prp:LS-Dim}, there exists a simple, weakly cancellative, countably based sub-\CuSgp{} $T\subseteq S$ satisfying \axiomO{5} and \axiomO{6} with $\dim (T)\leq n$ and $H\subseteq T$.

It follows from \cite[Proposition~3.17]{ThiVil21arX:DimCu} that $\dim (T_{\rm{soft}})\leq n$.
We note that $T_{\rm{soft}}$ is a sub-\CuSgp{} of $S_{\rm{soft}}$ containing $H$. 
Thus, every finite subset of $S_{\rm{soft}}$ is contained in a sub-\CuSgp{} of dimension at most $n$. 
This shows that $\dim (S_{\rm{soft}})\leq n$ by \autoref{prp:CharDimCtbl}.
 
To prove the second inequality, set $m:=\dim(S_{\rm{soft}})$, which we may assume to be finite.
Let $H$ be a finite subset of $S$. 
Using once again \autoref{prp:genSubCu} and \autoref{prp:LS-O5O6O7}, one finds a countably based sub-\CuSgp{} $T^{(1)}\subseteq S$ satisfying \axiomO{5} and \axiomO{6} with $H\subseteq T^{(1)}$.
 
By \autoref{prp:LS-Dim}, there exists a countably based sub-\CuSgp{} $R^{(1)}$ of $S_{\rm{soft}}$ such that $T^{(1)}_{\rm{soft}}\subset R^{(1)}$ and $\dim(R^{(1)})\leq m$. 
Since $R^{(1)}$ and $T^{(1)}$ are countably based, there exists by \autoref{prp:genSubCu} and \autoref{prp:LS-O5O6O7} a countably based sub-\CuSgp{} $T^{(2)}\subseteq S$ satisfying \axiomO{5} and \axiomO{6} with $R^{(1)},T^{(1)}\subseteq T^{(2)}$.
 
Proceeding in this manner, one obtains an increasing sequence of countably based sub-\CuSgp{s} $T^{(k)}\subseteq S$ satisfying \axiomO{5} and \axiomO{6} and an increasing sequence of sub-\CuSgp{} $R^{(k)}$ of $S_{\rm{soft}}$ with dimension at most $m$ such that
\[
T^{(k)}_{\rm{soft}}\subseteq R^{(k)}\subseteq T^{(k+1)}.
\]

Set $T:=\overline{\bigcup_k T^{(k)}}^{\txtSup}$ and $R:=\overline{\bigcup_k R^{(k)}}^{\txtSup}$.
Then $T\subseteq S$ is a countably based sub-\CuSgp.
Since each $T^{(k)}$ satisfies \axiomO{5} and \axiomO{6}, it follows from \autoref{prp:LS-O5O6O7} that $T$ also satisfies them. 
Moreover, using that $T$ is a sub-\CuSgp{} of $S$, it follows from \autoref{prp:LS-SimpleCanc} that $T$ is also simple and weakly cancellative.
 
Since $\dim(R^{(k)})\leq m$ for every $k$, we have by \autoref{prp:CharDimCtbl} (or using \autoref{prp:Permanence}) that $\dim(R)\leq m$, and it is easy to check that $T_{\rm{soft}}=R$.
Applying \cite[Proposition~3.17]{ThiVil21arX:DimCu}, we get 
\[
\dim (T)\leq \dim(T_{\rm{soft}})+1 =\dim(R)+1\leq m+1.
\]

Thus, every finite subset of $S$ is contained in a sub-\CuSgp{} with dimension at most $m+1$. 
This shows, by \autoref{prp:CharDimCtbl}, that $\dim(S)\leq m+1$, as desired.
\end{proof}

\section{Dimension of the Cuntz semigroup as a noncommutative dimension theory}
\label{sec:DimNonComThe}

In this section, we show that associating to a \ca{} the dimension of its Cuntz semigroup satisfies the L{\"o}wenheim-Skolem condition;
see \autoref{prp:SepSubCaDim}.
It follows that this association is a well-behaved invariant that satisfies most of the  axioms of a noncommutative dimension theory in the sense of \cite[Definition~1]{Thi13TopDimTypeI};
see \autoref{pgr:DimThy}.
It remains open if the dimension of the Cuntz semigroup is compatible with minimal unitizations;
see \autoref{qst:DimCuIsDimThy}.

If $B\subseteq A$ is a sub-\ca{}, then the inclusion map $B\to A$ induces a \CuMor{} $\Cu(B)\to\Cu(A)$ which in general is not an order-embedding.
Thus, the Cuntz semigroup of a sub-\ca{} is not necessarily a sub-\CuSgp.
However, the next results shows that there are sufficiently many separable sub-\ca{s} whose Cuntz semigroups are sub-\CuSgp{s}.

Given a \ca{} $A$, we let $\SubSep(A)$ denote the collection of separable sub-\ca{s} of $A$.
See \cite[Paragraph~3.1]{Thi20arX:grSubhom} for details.

\begin{prp}
\label{prp:SubCaWithSubCu}
Let $A$ be a \ca.
Then
\[
\mathcal{S} := \big\{B\in\SubSep(A) : \Cu(B)\to\Cu(A) \text{ is an order-embedding} \big\}
\]
is $\sigma$-complete and cofinal.
Each $B\in\mathcal{S}$ induces a countably based sub-\CuSgp{} $\Cu(B)\subseteq\Cu(A)$.
Let $\alpha\colon\mathcal{S}\to\SubCtbl(\Cu(A))$ be the map that sends $B\in\mathcal{S}$ to the sub-\CuSgp{} $\Cu(B)\subseteq\Cu(A)$.
Then $\alpha$ preserves the order and the suprema of countable directed subsets, and the image of $\alpha$ is a cofinal subset of $\SubCtbl(\Cu(A))$.
\end{prp}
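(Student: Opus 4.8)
The plan is to phrase everything through \autoref{prp:CharSubCu2}: a \CuMor{} $\Cu(B)\to\Cu(A)$ is an order-embedding exactly when it reflects the way-below relation, and in that case its image is a sub-\CuSgp{} of $\Cu(A)$ onto which $\Cu(B)$ is isomorphic. Since $B$ is separable, $\Cu(B)$ is countably based, so for $B\in\mathcal{S}$ the image is a countably based sub-\CuSgp{} of $\Cu(A)$; this disposes of the assertion about each $B\in\mathcal{S}$ and makes $\alpha$ well-defined. Order-preservation of $\alpha$ is then formal: if $B\subseteq B'$ in $\mathcal{S}$, then by functoriality $\Cu(B)\to\Cu(A)$ factors through $\Cu(B')\to\Cu(A)$, so the image $\alpha(B)$ is contained in $\alpha(B')$. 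It thus remains to prove cofinality of $\mathcal{S}$, $\sigma$-completeness of $\mathcal{S}$ together with continuity of $\alpha$, and cofinality of the image of $\alpha$.

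The technical heart, and the step I expect to be the main obstacle, is cofinality of $\mathcal{S}$. Given $B_0\in\SubSep(A)$ I would pass to stabilizations and use the local characterization of Cuntz subequivalence: $a\precsim b$ in $A\otimes\KK$ if and only if for every $\varepsilon>0$ there is $r\in A\otimes\KK$ with $\|(a-\varepsilon)_+-rbr^*\|<\varepsilon$. I then build an increasing sequence $B_0\subseteq B_1\subseteq\cdots$ of separable sub-\ca{s}: fix a countable dense subset $D_k\subseteq(B_k\otimes\KK)_+$, and for each pair $a,b\in D_k$ with $a\precsim b$ in $A$ and each $n\in\NN$ adjoin to $B_{k+1}$ a witness $r$ with $\|(a-\tfrac1n)_+-rbr^*\|<\tfrac1n$. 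Set $B:=\overline{\bigcup_k B_k}$. A perturbation argument of the type carried out in \autoref{prp:CuApproxCAlg} (Claims~1--3 there) shows that whenever $a,b\in(B\otimes\KK)_+$ satisfy $a\precsim b$ in $A$, one may approximate $a,b$ by elements of some $D_k$, transport the adjoined witnesses, and conclude $a\precsim b$ already in $B$; hence $\Cu(B)\to\Cu(A)$ reflects $\leq$ and $B\in\mathcal{S}$. The delicate point is the bookkeeping guaranteeing that subequivalences among the \emph{newly} adjoined elements are themselves handled at a later stage, which is precisely what iterating over all $D_k$ and passing to the union accomplishes.

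For $\sigma$-completeness and continuity of $\alpha$ I would rely on continuity of the Cuntz functor. A countable directed family $\mathcal{B}\subseteq\mathcal{S}$ admits a cofinal increasing sequence, so $B:=\overline{\bigcup\mathcal{B}}$ is separable and $\Cu(B)=\varinjlim_{B'\in\mathcal{B}}\Cu(B')$. I then prove the auxiliary fact that an inductive limit of order-embeddings into a fixed \CuSgp{} is again an order-embedding: if compatible order-embeddings $\psi_{B'}\colon\Cu(B')\to\Cu(A)$ induce $\psi\colon\Cu(B)\to\Cu(A)$ and $\psi(a)\leq\psi(b)$, then for each $a'\ll a$ the inductive-limit conditions (L1) and (L2) produce a stage $B'$ and elements witnessing $a'\leq b$, whence $a\leq b$ by \axiomO{2}. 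Thus $B\in\mathcal{S}$ and $\mathcal{S}$ is $\sigma$-complete. Moreover $\alpha(B)=\psi(\Cu(B))$ is exactly the sup-closure $\overline{\bigcup_{B'}\alpha(B')}^{\txtSup}$, which by the paragraph preceding \autoref{prp:LatticeSubCu} is the supremum of the $\alpha(B')$ in $\SubCtbl(\Cu(A))$; hence $\alpha$ preserves suprema of countable directed subsets.

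Finally, for cofinality of the image of $\alpha$, I start from a countably based sub-\CuSgp{} $T_0\subseteq\Cu(A)$ and choose a countable basis $B_{T_0}\subseteq T_0$. Lifting each $t\in B_{T_0}$ to a positive element $a_t\in A\otimes\KK$ with $[a_t]=t$, let $B_{00}$ be the separable sub-\ca{} they generate; cofinality of $\mathcal{S}$ gives $B\in\mathcal{S}$ with $B_{00}\subseteq B$, so $B_{T_0}\subseteq\alpha(B)$. Since $\alpha(B)$ is a sub-\CuSgp{} it is closed under suprema of $\ll$-increasing sequences, and every element of $T_0$ is such a supremum of basis elements (interpolate a $\ll$-increasing sequence from $B_{T_0}$ using \axiomO{2}); therefore $T_0\subseteq\alpha(B)$, proving that the image of $\alpha$ is cofinal.
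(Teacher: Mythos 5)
Most of your proposal coincides with the paper's own proof: the well-definedness of $\alpha$ via \autoref{prp:CharSubCu2}, the $\sigma$-completeness of $\mathcal{S}$ via continuity of the functor $\Cu$ and an (L1)/(L2)-argument showing that a countable directed limit of order-embeddings into $\Cu(A)$ is again an order-embedding, the continuity of $\alpha$ via the lattice structure from \autoref{prp:LatticeSubCu}, and the cofinality of the image of $\alpha$ by lifting a countable basis of $T_0$ to positive elements and using sup-closedness of $\alpha(B)$ --- all of these are, in essence, exactly the arguments in the paper. Where you diverge is the cofinality of $\mathcal{S}$ itself. The paper does not attempt a hands-on construction here: it invokes the model theory of \ca{s}, namely the existence of separable elementary submodels containing a given separable sub-\ca{} (\cite[Theorem~2.6.2]{FarHarLupRobTikVigWin16arX:ModelThy}) together with the fact that an elementary submodel $B\prec A$ has order-embedding $\Cu$-map (\cite[Lemma~8.1.3]{FarHarLupRobTikVigWin16arX:ModelThy}). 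Your direct bootstrap is a legitimate alternative route in principle, but as written it has a genuine gap.

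The gap is in your index set. You adjoin witnesses only for pairs $a',b'\in D_k$ satisfying $a'\precsim b'$ \emph{exactly} in $A$, and at verification time you propose to ``approximate $a,b$ by elements of some $D_k$ and transport the adjoined witnesses.'' But Cuntz subequivalence is not stable under norm-small perturbations: if $a,b\in(B\otimes\KK)_+$ satisfy $a\precsim_A b$ and $a',b'\in D_k$ are $\delta$-close to $a,b$, there is no reason that $a'\precsim_A b'$ (a perturbation $b'$ of $b$ can have $[b']$ strictly below $[b]$), and since $D_k$ is an arbitrary countable dense set fixed in advance, nothing guarantees that $D_k^2$ contains \emph{any} exactly related pair near $(a,b)$; the relation $\{(x,y):x\precsim_A y\}$ is not open, so norm-density does not give density inside the relation. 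Consequently there may be no adjoined witnesses to transport at all. A second, independent problem is that your adjoined relation $\|(a'-\tfrac1n)_+-rb'r^*\|<\tfrac1n$ carries the bare element $b'$ on the right: even granting the witnesses, it yields $(a'-\tfrac2n)_+\precsim_B b'$, and one cannot pass from $\precsim_B b'$ to $\precsim_B b$, since only $(b'-\delta)_+\precsim_B b$ is available. Both issues are repaired by the standard device of indexing instead over the \emph{open} condition ``there exists $r\in A\otimes\KK$ with $\|(a'-\varepsilon')_+-r(b'-\eta')_+r^*\|<\varepsilon'$'' for $a',b'\in D_k$ and rational $\varepsilon',\eta'>0$: openness makes the condition robust under replacing $(a,b)$ by approximants, and the cut-down $(b'-\eta')_+$ on the right (with $\eta'$ fixed before the approximation scale $\delta$) lets the verification close up through the chain $(a-3\varepsilon')_+\precsim_B(a'-2\varepsilon')_+\precsim_B(b'-\eta')_+\precsim_B(b'-\delta)_+\precsim_B b$, using R{\o}rdam-type perturbation lemmas rather than norm estimates (which would fail anyway, as the norms of adjoined witnesses are uncontrolled). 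The delicate point you do flag --- relations among newly adjoined elements --- is indeed handled by iterating and taking the union; it is not where the argument breaks.
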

\begin{proof}
To prove that $\mathcal{S}$ is $\sigma$-complete, let $\mathcal{T}\subseteq\mathcal{S}$ be a countable, directed subfamily.
Set $D:=\overline{\bigcup\mathcal{T}}$.
We need to verify $D\in\mathcal{S}$.
Let $\varphi_{A,D}\colon \Cu(D)\to \Cu(A)$ denote the \CuMor{} induced by the inclusion map $D\to A$.
Similarly, we define $\varphi_{D,B}\colon \Cu(B)\to\Cu(D)$ and $\varphi_{A,B}\colon \Cu(B)\to\Cu(A)$ for each $B\in\mathcal{T}$.

To verify that $\varphi_{A,D}$ is an order-embedding, let $x,y\in\Cu(D)$ satisfy
\[
\varphi_{A,D}(x) \leq \varphi_{A,D}(y).
\]
Let $x', x''\in\Cu(D)$ be such that $x'\ll x''\ll x$.
Then $\varphi_{A,D}(x'')\ll\varphi_{A,D}(x)\leq\varphi_{A,D}(y)$, which allows us to choose $y'\in\Cu(D)$ with
\[
\varphi_{A,D}(x'') \ll \varphi_{A,D}(y'), \andSep y'\ll y.
\]

Using that $D\cong\varinjlim_{B\in\mathcal{T}}B$, we have $\Cu(D)\cong\varinjlim_{B\in\mathcal{T}}\Cu(B)$ by \cite[Corollary~3.2.9]{AntPerThi18TensorProdCu}.
Applying (L2) from \cite[Paragraph~3.8]{ThiVil21arX:DimCu} (see also the proof of \autoref{prp:ApproxLimit}), we obtain $B\in\mathcal{T}$ and $c,d\in\Cu(B)$ such that
\[
x' \ll \varphi_{D,B}(c) \ll x'', \andSep
y' \ll \varphi_{D,B}(d) \ll y.
\]
Then
\[
\varphi_{A,B}(c)
= \varphi_{A,D}( \varphi_{D,B}(c) )
\ll \varphi_{A,D}(x'') 
\ll \varphi_{A,D}(y')
\ll \varphi_{A,D}( \varphi_{D,B}(d) )
= \varphi_{A,B}(d).
\]
Using that $\varphi_{A,B}$ is an order-embedding, we obtain $c\ll d$ in $\Cu(B)$, and thus
\[
x' 
\ll \varphi_{D,B}(c) 
\ll \varphi_{D,B}(d) 
\ll y.
\]
Thus, $x'\ll y$ for every $x'$ way-below $x$, which implies $x\leq y$.

To verify that $\mathcal{S}$ is cofinal, let $B_0\in\SubSep(A)$.
By \cite[Theorem~2.6.2]{FarHarLupRobTikVigWin16arX:ModelThy}, there exists $B\in\SubSep(A)$ such that $B_0\subseteq B$ and such that $B$ is an elementary submodel of $A$. 
By \cite[Lemma~8.1.3]{FarHarLupRobTikVigWin16arX:ModelThy}, $\Cu(B)\to\Cu(A)$ is an order-embedding.
Thus, $B$ belongs to $\mathcal{S}$, as desired.

Given $B\in\mathcal{S}$, it follows from \autoref{prp:CharSubCu2} that we can identify $\Cu(B)$ with a sub-\CuSgp{} of $\Cu(A)$.
Since $B$ is separable, $\Cu(B)$ is countably based.
It is then straightforward to see that the map $\alpha$ is order-preserving.
Next, let $\mathcal{T}\subseteq\mathcal{S}$ be a countable, directed subset, and set $D:=\overline{\bigcup\mathcal{T}}$.
We identify $\Cu(D)$ and $\Cu(B)$ (for each $B\in\mathcal{T}$) with sub-\CuSgp{s} of $\Cu(A)$.
Then $(\Cu(B))_{B\in\mathcal{T}}$ is a countable, directed family in $\SubCtbl(\Cu(A))$, with supremum given by $\sup_{B\in\mathcal{T}}\Cu(B)=\overline{\bigcup_{B\in\mathcal{T}}\Cu(B)}^{\txtSup}$;
see \autoref{prp:LatticeSubCu}.
Since $\Cu(B)$ is contained in $\Cu(D)$ for each $B\in\mathcal{T}$, we have
\[
\overline{\bigcup_{B\in\mathcal{T}}\Cu(B)}^{\txtSup}\subseteq\Cu(D).
\]
The other inclusion follows using that $\Cu(D)$ is the inductive limit of $(\Cu(B))_{B\in\mathcal{T}}$.
This shows that $\alpha$ preserves suprema of countable directed subsets.

Finally, to show that the image of $\alpha$ is cofinal, let $T\in\SubCtbl(\Cu(A))$.
Choose a countable basis $D\subseteq T$.
For each $x\in D$ choose $a_x\in (A\otimes\KK)_+$ with $x=[a_x]$.
We can then choose a separable sub-\ca{} $B_0\subseteq A$ such that each $a_x$ is contained in $B_0\otimes\KK\subseteq A\otimes\KK$.
Using that $\mathcal{S}$ is cofinal, we obtain $B\in\mathcal{S}$ containing $B_0$.
Then the sub-\CuSgp{} $\Cu(B)\subseteq\Cu(A)$ contains each $x\in D$, which implies $T\subseteq\Cu(B)$ as required.
\end{proof}

\begin{thm}
\label{prp:SepSubCaDim}
Let $n\in\NN$, and let $A$ be a \ca{} satisfying $\dim(\Cu(A))\leq n$.
Then 
\[
\mathcal{S}:=\big\{ B\in\SubSep(A) : \Cu(B)\to\Cu(A) \text{ order-embedding}, \dim(\Cu(B))\leq n \big\}
\]
is $\sigma$-complete and cofinal.

In particular, for every separable sub-\ca{} $B_0\subseteq A$ there exists a separable sub-\ca{} $B\subseteq A$ such that $B_0\subseteq B$ and $\dim(\Cu(B))\leq n$.
\end{thm}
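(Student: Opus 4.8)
The plan is to realize $\mathcal{S}$ as a pullback and combine the two Löwenheim-Skolem statements already available. Write $\mathcal{S}_0$ for the family of $B\in\SubSep(A)$ with $\Cu(B)\to\Cu(A)$ an order-embedding, which by \autoref{prp:SubCaWithSubCu} is $\sigma$-complete and cofinal, and recall from the same proposition the order-preserving map $\alpha$ that sends $B\in\mathcal{S}_0$ to the countably based sub-\CuSgp{} $\Cu(B)\subseteq\Cu(A)$; it preserves suprema of countable directed subsets and has cofinal image in $\SubCtbl(\Cu(A))$. Let $\mathcal{R}\subseteq\SubCtbl(\Cu(A))$ be the countably based sub-\CuSgp{s} of dimension at most $n$, which is $\sigma$-complete and cofinal by \autoref{prp:LS-Dim}. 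Since a separable $B$ satisfies $\dim(\Cu(B))\leq n$ exactly when $\alpha(B)\in\mathcal{R}$, we have $\mathcal{S}=\{B\in\mathcal{S}_0:\alpha(B)\in\mathcal{R}\}$, and it remains to show this pullback is $\sigma$-complete and cofinal.

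For $\sigma$-completeness I would take a countable directed $\mathcal{T}\subseteq\mathcal{S}$ and set $D:=\overline{\bigcup\mathcal{T}}$, which lies in $\mathcal{S}_0$ by its $\sigma$-completeness. As $\alpha$ preserves suprema of countable directed subsets, $\alpha(D)=\sup_{B\in\mathcal{T}}\alpha(B)$ is the supremum of a countable directed subset of $\mathcal{R}$, hence lies in $\mathcal{R}$ by $\sigma$-completeness of the latter; equivalently, $\Cu(D)=\varinjlim_{B\in\mathcal{T}}\Cu(B)$ and dimension at most $n$ passes to inductive limits by \autoref{prp:Permanence}. Either way $D\in\mathcal{S}$.

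Cofinality is the crux and calls for a back-and-forth between $\mathcal{S}_0$ and $\mathcal{R}$, since $\alpha$ runs in only one direction. Given $B_0\in\SubSep(A)$, I would alternate: use cofinality of $\mathcal{S}_0$ to get $B_1\in\mathcal{S}_0$ with $B_0\subseteq B_1$; use cofinality of $\mathcal{R}$ to get $T_1\in\mathcal{R}$ with $\Cu(B_1)\subseteq T_1$; use cofinality of the image of $\alpha$ to get some $C\in\mathcal{S}_0$ with $T_1\subseteq\Cu(C)$, then (again by cofinality of $\mathcal{S}_0$) enlarge to $B_2\in\mathcal{S}_0$ absorbing both $C$ and $B_1$, so that $B_1\subseteq B_2$ and $T_1\subseteq\Cu(B_2)$. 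Here $B_1\subseteq B_2$ forces $\Cu(B_1)\subseteq\Cu(B_2)$ as sub-\CuSgp{s} of $\Cu(A)$, because the composite order-embedding $\Cu(B_1)\to\Cu(B_2)\to\Cu(A)$ makes $\Cu(B_1)\to\Cu(B_2)$ an order-embedding by \autoref{prp:CharSubCu2}. Iterating yields increasing sequences with $\Cu(B_k)\subseteq T_k\subseteq\Cu(B_{k+1})$ for all $k$.

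Finally, set $B:=\overline{\bigcup_k B_k}$; by $\sigma$-completeness of $\mathcal{S}_0$ we get $B\in\mathcal{S}_0$, and $B$ is separable with $B_0\subseteq B$. The interleaving forces $\bigcup_k\Cu(B_k)=\bigcup_k T_k$, so, using that $\alpha$ preserves the supremum of the directed family $(B_k)_k$, as sub-\CuSgp{s} of $\Cu(A)$ we obtain $\Cu(B)=\overline{\bigcup_k\Cu(B_k)}^{\txtSup}=\overline{\bigcup_k T_k}^{\txtSup}=\varinjlim_k T_k$. Since each $\dim(T_k)\leq n$, \autoref{prp:Permanence} gives $\dim(\Cu(B))\leq n$, so $B\in\mathcal{S}$. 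The main obstacle is precisely this bookkeeping: arranging two interleaved cofinal sequences—one of separable algebras, one of sub-\CuSgp{s} of dimension at most $n$—with a common colimit that simultaneously witnesses the order-embedding property and the dimension bound. The preservation of countable directed suprema by $\alpha$ together with the inductive-limit permanence of dimension are exactly what let the colimit inherit both. The concluding ``in particular'' is then immediate, taking $B_0$ to be the given separable subalgebra.
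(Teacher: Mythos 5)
Your proof is correct and takes essentially the same route as the paper: the same pullback description $\mathcal{S}=\{B\in\mathcal{S}_0:\alpha(B)\in\mathcal{R}\}$ built from \autoref{prp:SubCaWithSubCu} and \autoref{prp:LS-Dim}, the same $\sigma$-completeness argument via preservation of countable directed suprema by $\alpha$, and the same interleaved back-and-forth sequence $\Cu(B_k)\subseteq T_k\subseteq \Cu(B_{k+1})$ for cofinality. Your additional bookkeeping (enlarging $B_{k+1}$ to absorb $B_k$ and invoking \autoref{prp:CharSubCu2} so that inclusions of algebras give inclusions of sub-$\mathrm{Cu}$-semigroups) merely makes explicit a step the paper leaves implicit.
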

\begin{proof}
Set
\begin{align*}
\mathcal{S}_0 &:= \big\{B\in\SubSep(A) : \Cu(B)\to\Cu(A) \text{ is an order-embedding} \big\}, \\
\mathcal{T} &:= \big\{T\in\SubCtbl(\Cu(A)) : \dim(T)\leq n \big\}.
\end{align*}

By \autoref{prp:SubCaWithSubCu}, $\mathcal{S}_0$ is a $\sigma$-complete and cofinal subfamily of $\SubSep(A)$.
Similarly, by \autoref{prp:LS-Dim}, $\mathcal{T}$ is a $\sigma$-complete and cofinal subset of $\SubCtbl(\Cu(B))$.
Let $\alpha\colon\mathcal{S}_0\to\SubCtbl(\Cu(A))$ be the map that sends $B\in\mathcal{S}_0$ to the sub-\CuSgp{} $\Cu(B)\subseteq\Cu(A)$, as in \autoref{prp:SubCaWithSubCu}.
Then
\[
\mathcal{S} = \big\{ B\in \mathcal{S}_0 : \alpha(B)\in \mathcal{T} \big\}.
\]

Using that $\mathcal{S}_0$ and $\mathcal{T}$ are $\sigma$-complete, and using that $\alpha$ preserves suprema of countable, directed sets, it follows that $\mathcal{S}$ is $\sigma$-complete.
To show that $\mathcal{S}$ is cofinal, let $B_0\in\SubSep(A)$.
Using that $\mathcal{T}$ is cofinal, we obtain $T_0\in\mathcal{T}$ such that $\alpha(B_0)\subseteq T_0$.
Using that the image of $\alpha$ is cofinal, we find $B_1\in\mathcal{S}_0$ such that $T_0\subseteq\alpha(B_1)$.
Continuing successively, we obtain an increasing sequence $(T_k)_{k\in\NN}$ in $\mathcal{T}$ and an increasing sequence $(B_k)_{k\geq 1}$ in $\mathcal{S}_0$ such that
\[
\alpha(B_0)\subseteq T_0 \subseteq \alpha(B_1)\subseteq T_1 \subseteq \alpha(B_2)\subseteq T_2 \subseteq \ldots.
\]
Set $B:=\overline{\bigcup_k B_k}$ and $T:=\overline{\bigcup_k T_k}^{\txtSup}$.
Then $B_0\subseteq B$, $B\in\mathcal{S}_0$ and $T\in\mathcal{T}$.
Using that $\alpha$ preserves suprema of countable, directed sets, we get $\alpha(B)=T$, and thus $B\in\mathcal{S}$, as desired.
\end{proof}

\begin{cor}
\label{prp:CharDimCa}
Let $A$ be a \ca{}, and let $n\in\NN$.
Then $\dim(\Cu(A))\leq n$ if and only if every finite (or countable) subset of $A$ is contained in a separable sub-\ca{} $B\subseteq A$ satisfying $\dim(\Cu(B))\leq n$.
\end{cor}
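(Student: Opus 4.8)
The plan is to derive this characterization directly from the two main results assembled so far: \autoref{prp:SepSubCaDim} for the ``only if'' direction and \autoref{prp:DimApproxCAlg} for the ``if'' direction. Since every finite subset is countable and every countable subset generates a separable sub-\ca{}, it suffices to show that $\dim(\Cu(A))\leq n$ implies the condition for countable subsets, that the condition for countable subsets trivially implies the condition for finite subsets, and that the condition for finite subsets implies $\dim(\Cu(A))\leq n$. These three implications close the cycle and simultaneously handle both the finite and the countable formulation.

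For the forward direction, I would assume $\dim(\Cu(A))\leq n$ and let $F\subseteq A$ be a countable subset. First enclose $F$ in a separable sub-\ca{} $B_0\subseteq A$, for instance the sub-\ca{} generated by $F$. Then I would invoke the ``in particular'' part of \autoref{prp:SepSubCaDim} to obtain a separable sub-\ca{} $B\subseteq A$ with $B_0\subseteq B$ and $\dim(\Cu(B))\leq n$. Since $F\subseteq B_0\subseteq B$, this $B$ witnesses the required condition; the same $B$ of course serves for any finite subset as well.

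For the backward direction, I would assume that every finite subset of $A$ is contained in some separable sub-\ca{} $B$ with $\dim(\Cu(B))\leq n$, and collect all such sub-\ca{s} into a family $\mathcal{B}$ indexed by itself. The key observation is that $\mathcal{B}$ \emph{approximates} $A$ in the sense recalled before \autoref{prp:CuApproxCAlg}: given $a_1,\ldots,a_m\in A$ and $\varepsilon>0$, the finite set $\{a_1,\ldots,a_m\}$ lies in some $B\in\mathcal{B}$ by hypothesis, so taking $b_j:=a_j\in B$ yields $\|b_j-a_j\|=0<\varepsilon$ trivially. Then \autoref{prp:DimApproxCAlg} immediately gives $\dim(\Cu(A))\leq\sup_{B\in\mathcal{B}}\dim(\Cu(B))\leq n$.

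Since both implications are immediate consequences of the two main theorems, I do not expect a genuine obstacle in this argument. The only point requiring a moment of care is the interplay between the ``finite'' and ``countable'' versions of the condition, and this is resolved by the sandwich described above: the countable version implies the finite version trivially, the finite version implies $\dim(\Cu(A))\leq n$ via approximation, and $\dim(\Cu(A))\leq n$ recovers the countable version via \autoref{prp:SepSubCaDim}.
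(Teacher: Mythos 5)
Your proposal is correct and takes essentially the same route as the paper: the forward implication via \autoref{prp:SepSubCaDim} after enclosing the given set in a separable sub-\ca{}, and the backward implication by observing that the family of witnessing sub-\ca{s} approximates $A$ (trivially, with $b_j=a_j$) and applying \autoref{prp:DimApproxCAlg}. Your explicit three-step cycle handling the finite versus countable formulations is just a spelled-out version of the same argument.
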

\begin{proof}
The forward implication is \autoref{prp:SepSubCaDim}, and the backward implication follows from \autoref{prp:DimApproxCAlg}.
\end{proof}

\begin{pgr}
\label{pgr:DimThy}
Following \cite[Definition~1]{Thi13TopDimTypeI}, we say that an assignment that to each \ca{} $A$ associates a number (the dimension) $d(A)\in\{0,1,2,\ldots,\infty\}$ is a \emph{(noncommutative) dimension theory} if the following conditions are satisfied:
\begin{enumerate}
\item[\axiomD{1}]
$d(I)\leq d(A)$ whenever $I\subseteq A$ is an ideal in a \ca{} $A$;
\item[\axiomD{2}]
$d(A/I)\leq d(A)$ whenever $I\subseteq A$ is an ideal in a \ca{} $A$;
\item[\axiomD{3}]
$d(A\oplus B)=\max\{d(A),d(B)\}$, whenever $A$ and $B$ are \ca{s};
\item[\axiomD{4}]
$d(\widetilde{A})=d(A)$ for every \ca{} $A$;
\item[\axiomD{5}]
If $n\in\NN$ and if $A$ is a \ca{} that is approximated by sub-\ca{s} $A_\lambda\subseteq A$ with $d(A_\lambda)\leq n$, then $d(A)\leq n$;
\item[\axiomD{6}]
Given a \ca{} $A$ and a separable sub-\ca{} $B_0\subseteq A$, there exists a separable sub-\ca{} $B\subseteq A$ such that $B_0\subseteq B$ and $d(B)\leq d(A)$.
\end{enumerate}

Assigning to a \ca{} the dimension of its Cuntz semigroup satisfies conditions \axiomD{1}, \axiomD{2} and \axiomD{3} (by Proposition~3.10 of \cite{ThiVil21arX:DimCu}), \axiomD{5} (by \autoref{prp:DimApproxCAlg}) and \axiomD{6} (by \autoref{prp:SepSubCaDim}).
However, \autoref{exa:IncrDimUnitization} shows that \axiomD{4} does not hold.

Using \cite[Proposition~3]{Thi13TopDimTypeI}, one can also see that this assignemnt is in fact \emph{Morita-invariant}, that is, $\dim (\Cu (A))=\dim (\Cu (B))$ whenever $A$ and $B$ are Morita equivalent.
\end{pgr}

\begin{exa}
\label{exa:IncrDimUnitization}
Let $\mathcal{W}$ denote the Jacelon-Razac algebra.
Then
\[
\dim(\Cu(\mathcal{W}))
= 0, \andSep
\dim(\Cu(\widetilde{\mathcal{W}}))=1.
\]
Indeed, we have $\Cu(\mathcal{W})\cong[0,\infty]$, which is easily seen to be zero-dimensional.
(See also \cite[Proposition~3.22]{ThiVil21arX:DimCu}.)

Further, since the nuclear dimension of $\widetilde{\mathcal{W}}$ is $1$, we have $\dim(\Cu(\widetilde{\mathcal{W}}))\leq 1$ by \cite[Theorem~4.1]{ThiVil21arX:DimCu}.
On the other hand, since $\mathcal{W}$ has stable rank one, so does $\widetilde{\mathcal{W}}$ (by definition); 
but $\mathcal{W}$ does not have real rank zero, and hence neither does $\widetilde{\mathcal{W}}$.
Thus, it follows from \cite[Corollary~5.8]{ThiVil21arX:DimCu} that $\Cu(\widetilde{\mathcal{W}})$ is not zero-dimensional.
\end{exa}

\begin{qst}
\label{qst:DimCuIsDimThy}
Let $I\subseteq A$ be an ideal in a unital \ca{} $A$.
Do we have $\dim(\Cu(\widetilde{I}))\leq\dim(\Cu(A))$?
\end{qst}

If the above question has a positive answer, then associating to a \ca{} the dimension of the Cuntz semigroup of its minimal unitization is a noncommutative dimension theory.
Indeed, one can verify that this assignment satisfies \axiomD{2}-\axiomD{6}, and \autoref{qst:DimCuIsDimThy} is asking if \axiomD{1} holds.


\providecommand{\etalchar}[1]{$^{#1}$}

\end{document}